\newcommand{\into}{\hookrightarrow}
\newcommand{\tensor}{\otimes}
\newcommand{\id}{\mathrm{Id}}
\newcommand{\Z}{\mathbb{Z}}
\newcommand{\R}{\mathbb{R}}
\newcommand{\C}{\mathbb{C}}
\newcommand{\N}{\mathbb{N}}
\newcommand{\B}{\mathrm{B}}
\newcommand{\grad}{\nabla}
\renewcommand{\flat}{\beta}
\newcommand{\sR}{\mathcal{R}}
\DeclareMathOperator{\im}{Im}
\DeclareMathOperator{\re}{Re}
\DeclareMathOperator{\stab}{Stab}
\renewcommand{\Re}{\re}
\renewcommand{\Im}{\im}
\newcommand{\sslash}{/\!\!/}
\newcommand{\define}{:=}
\newcommand{\F}{\mathcal{F}}
\newcommand{\CP}{\mathbb{CP}}
\renewcommand{\H}{\mathbb{H}}
\newcommand{\ML}{\mathcal{ML}}
\renewcommand{\P}{\mathbb{P}}
\newcommand{\PQ}{\mathbb{P}^+Q}
\renewcommand{\tilde}[1]{\widetilde{#1}}
\newcommand{\X}{\mathcal{X}}
\newcommand{\hyp}{\sigma_{\mathrm{hyp}}}
\newcommand{\sph}{\sigma_{\CP^1}}
\newcommand{\I}{\mathrm{I}}
\newcommand{\II}{\mathrm{I\!I}}
\newcommand{\CAT}{\mathrm{CAT}}
\newcommand{\eps}{\varepsilon}
\renewcommand{\sl}{\mathfrak{sl}}
\renewcommand{\leq}{\leqslant}
\renewcommand{\geq}{\geqslant}
\newcommand{\noproof}{\hfill\qedsymbol}
\DeclareMathOperator{\olim}{\lim_{\omega}}
\DeclareMathOperator{\Lie}{\mathrm{Lie}}
\DeclareMathOperator{\Ad}{\mathrm{Ad}}
\DeclareMathOperator{\PSL}{\mathrm{PSL}}
\DeclareMathOperator{\SL}{\mathrm{SL}}
\DeclareMathOperator{\SU}{\mathrm{SU}}
\DeclareMathOperator{\hol}{hol}
\DeclareMathOperator{\isom}{Isom}
\DeclareMathOperator{\length}{Length}
\DeclareMathOperator{\diam}{Diam}
\DeclareMathOperator{\epst}{Ep}
\DeclareMathOperator{\ep}{\Sigma}
\DeclareMathOperator{\tr}{Tr}
\DeclareMathOperator{\Hom}{Hom}
\renewcommand{\hat}{\widehat}
\newcommand{\pmat}[4]{\begin{pmatrix}{#1} & {#2}\\{#3} & {#4}\end{pmatrix}}
\newcommand{\smat}[4]{\left ( \begin{smallmatrix}{#1} & {#2}\\{#3} & {#4}\end{smallmatrix} \right )}
\newcommand{\param}{{\mathchoice{\mkern1mu\mbox{\raise2.2pt\hbox{$\centerdot$}}\mkern1mu}{\mkern1mu\mbox{\raise2.2pt\hbox{$\centerdot$}}\mkern1mu}{\mkern1.5mu\centerdot\mkern1.5mu}{\mkern1.5mu\centerdot\mkern1.5mu}}}
\numberwithin{equation}{section}
\theoremstyle{plain}
\newtheorem{thm}{Theorem}[section]
\newtheorem{cor}[thm]{Corollary}
\newtheorem{lem}[thm]{Lemma}
\newtheorem{bigthm}{Theorem}
\newtheorem{bigcor}{Corollary}
\newtheorem*{nonumthm}{Theorem}
\theoremstyle{definition}
\newtheorem{example}{Example}
\theoremstyle{definition}
\newtheorem*{remark}{Remark}
\newtheorem*{remarksenv}{Remarks}
\newenvironment{rmenumerate}{\begin{enumerate}}{\end{enumerate}}
\newenvironment{prooflist}{\begin{proof}\mbox{}\begin{list}{(\roman{enumi})}{\usecounter{enumi}\setlength{\labelwidth}{30pt}\setlength{\itemindent}{0pt}\setlength{\leftmargin}{20pt}\setlength{\itemsep}{5pt}}}{\end{list}\end{proof}}
\begin{document}

\title[Holonomy limits of $\CP^1$ structures]{Holonomy
  limits of\\complex projective structures}
\author{David Dumas}
\thanks{Research supported in part by the National Science Foundation.}

\address{
Department of Mathematics, Statistics, and Computer Science\\
University of Illinois at Chicago\\
\tt{ddumas@math.uic.edu}
}

\date{July 23, 2015 (revised).  May 25, 2011 (original).}

% \dedicatory{For Stephanie.}

\maketitle

\section{Introduction}
\label{sec:introduction}

The set of complex projective structures on a compact Riemann surface
$X$ is parameterized by the vector space $Q(X)$ of holomorphic
quadratic differentials.  Each projective structure has an associated
holonomy representation, which defines a point in $\X(\Pi)$, the
$\SL_2(\C)$ character variety of the fundamental group $\Pi \define
\pi_1(X)$.  The resulting \emph{holonomy map} $\hol : Q(X) \to
\X(\Pi)$ is a proper holomorphic embedding.

In this paper we relate the large-scale behavior of the holonomy map
to the geometry of quadratic differentials on $X$.  In particular we
study the accumulation points of $\hol(Q(X))$ in the Morgan-Shalen
compactification of $\X(\Pi)$.  Such an investigation was proposed by
Gallo, Kapovich, and Marden in \cite[Sec.~12.4]{gkm}.

Boundary points in the Morgan-Shalen compactification are projective
equivalence classes $[\ell]$ of length functions $\ell : \Pi \to
\R^{+}$.  Each such function $\ell$ arises as the translation length
function of an isometric action of $\Pi$ on an $\R$-tree $T$; we say
such a tree $T$ \emph{represents} $[\ell]$.

Associated to each $\phi \in Q(X)$ there is an $\R$-tree $T_\phi$,
which is the space of leaves of the horizontal measured foliation of
$\phi$ lifted to the universal cover of $X$.  Our main result shows
that this tree predicts the Morgan-Shalen limit points of holonomy
representations associated to the ray $\R^+ \phi$, or more generally,
of any divergent sequence that converges to $\phi$ after rescaling.
More precisely, we show:

\begin{bigthm}
\label{thm:main-folding}
If $\phi_n \in Q(X)$ is a divergent sequence with projective limit
$\phi$, then any accumulation point of $\hol(\phi_n)$ in the
Morgan-Shalen boundary is represented by an $\R$-tree $T$ that admits
an equivariant, surjective straight map $T_\phi \to T$.
\end{bigthm}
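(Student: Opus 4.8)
The plan is to realize the Morgan--Shalen limit tree $T$ as a rescaled limit of $\Pi$-equivariant maps from $\tilde X$ into $\H^3$ built from the developing maps of the projective structures, and then to factor the limiting map through the leaf-space projection $\tilde X \to T_\phi$. Write $\rho_n = \hol(\phi_n)$ and normalize $\phi_n = r_n \psi_n$ with $r_n \to \infty$ and $\psi_n \to \phi$. After passing to a subsequence, fix rescaling factors $\lambda_n \to \infty$ and a minimal $\Pi$-tree $T$ representing the given accumulation point, so that $\ell_T(\gamma) = \lim_n \lambda_n^{-1} \length_{\H^3}(\rho_n(\gamma))$ for every $\gamma \in \Pi$; the factors $\lambda_n$ should turn out comparable to $\sqrt{r_n}$. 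To produce the equivariant maps I would feed the $\rho_n$-equivariant developing map $f_n : \tilde X \to \CP^1$ together with a suitably normalized conformal metric into the Epstein map, obtaining $\rho_n$-equivariant maps $\mathrm{Ep}_n : \tilde X \to \H^3$ whose large-scale geometry is governed by the flat coordinate $w_n = \int \sqrt{\phi_n}$.

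The analytic heart is a WKB comparison for the Schwarzian equation. First I would show that away from the zeros of $\phi_n$, in the coordinate $w_n$, the developing map is uniformly close to the exponential model built from the two WKB solutions $\phi_n^{-1/4}\exp(\pm \tfrac{i}{\sqrt 2}\int\sqrt{\phi_n})$ of $u'' + \tfrac12 \phi_n u = 0$. Transporting this model through the Epstein map shows that, after rescaling lengths in $\H^3$ by $\lambda_n^{-1}$, the distance between $\mathrm{Ep}_n(\tilde p)$ and $\mathrm{Ep}_n(\tilde q)$ is asymptotic to the vertical variation $|\im(w_n(\tilde q) - w_n(\tilde p))|$, i.e. to the transverse measure of the horizontal foliation separating $\tilde p$ and $\tilde q$. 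Two consequences follow: the rescaled holonomy length of $\gamma$ is bounded above by $\lambda_n^{-1}$ times the vertical variation along any representative loop, so in the limit $\ell_T(\gamma) \le i(\gamma, F_h) = \ell_{T_\phi}(\gamma)$; and, crucially, points on a common horizontal leaf have rescaled $\H^3$-distance tending to $0$.

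Granting uniform control of these estimates, I would extract a limiting $\Pi$-equivariant map $F_\infty : \tilde X \to T$ from the rescaled maps $\mathrm{Ep}_n$, via an ultralimit or Gromov--Hausdorff argument. By the second consequence above, $F_\infty$ is constant on horizontal leaves of $\phi$, hence factors through the leaf-space projection $\pi : \tilde X \to T_\phi$ as $F_\infty = \psi \circ \pi$ for a $\Pi$-equivariant map $\psi : T_\phi \to T$. Equivariance of $\psi$ is inherited from that of the $\mathrm{Ep}_n$; surjectivity follows because $\psi(T_\phi)$ is a nonempty $\Pi$-invariant subtree of the minimal tree $T$, so it must be all of $T$. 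Straightness is exactly the statement that the WKB model makes $\psi$ a local isometry along any segment of $T_\phi$ that avoids a branch point: each such segment corresponds to a band of horizontal leaves across which $\psi$ realizes the vertical variation isometrically, while identifications (folds) are confined to the $\Pi$-orbit of branch points, which are the images of the zeros of $\phi$.

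The main obstacle I anticipate is making the WKB comparison uniform in $n$. Because $\phi_n \to \phi$ only projectively, the zeros of $\phi_n$ may move and collide in the limit, the Stokes graph can change combinatorially, and the error terms in the semiclassical approximation must be controlled uniformly near these degenerating zeros; it is precisely the Stokes phenomenon at the zeros that permits---and bounds---the folding of $\psi$, so that one obtains a genuine straight morphism rather than a merely $1$-Lipschitz map, and so that the inequality $\ell_T \le \ell_{T_\phi}$ can be strict. Establishing this uniformity, and checking that $\psi$ is well defined independently of the subsequence and the auxiliary choices, is where the real work lies; the construction of $T$, the equivariance, and the surjectivity are then comparatively formal.
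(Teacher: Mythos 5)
Your overall architecture coincides with the paper's: build $\rho_n$-equivariant Epstein maps from the developing maps and a conformal metric, show that at scale $\|\phi_n\|^{1/2}$ horizontal leaves collapse while distances between points on a nonsingular geodesic are asymptotic to the height (vertical variation), pass to a Bestvina/Paulin geometric limit, and factor the limit map through $\pi:\Tilde{X}\to T_\phi$, with surjectivity from minimality. The genuine gap is the analytic core you yourself flag as open: a WKB comparison of the developing map with $\phi_n^{-1/4}\exp(\pm\tfrac{i}{\sqrt2}\int\sqrt{\phi_n})$, made uniform in $n$ near colliding turning points and across changing Stokes graphs, is precisely the hard and delicate part of that route, and nothing in your outline addresses it. The paper avoids ODE asymptotics entirely: it computes the first and second fundamental forms of the Epstein surface of the metric $|2\phi|^{1/2}$ in closed form, and these depend only on $\phi$ and $\hat{\phi}=\phi-\flat$, where $\flat=2\B(\hyp,|\phi|^{1/2})$ is scale-invariant and satisfies $|\flat/\phi|\leq 6/d(z)^2$ by the Nehari--Kraus theorem, $d(z)$ being the $\phi$-distance to the zeros. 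All the collapsing and quasigeodesic estimates follow from this single pointwise bound (plus a local-to-global quasigeodesic lemma), so uniformity in $n$ is automatic and no Stokes analysis is needed. The gap in your proposal is real, but it can be closed by a mechanism more elementary than the one you anticipate.

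Two further points. First, you assert $\lambda_n\asymp\sqrt{r_n}$ without proving the lower bound; if $\lambda_n/\sqrt{r_n}\to\infty$ the rescaled Epstein maps collapse to a point and no straight map results. The paper obtains the lower bound from the density of periodic directions (Masur; Boshernitzan et al.): there is a non-horizontal periodic $\phi$-geodesic, and the two-sided distance-equals-height estimate applied to its lift forces some $\gamma\in\Pi$ to have translation length at least $c\|\phi_n\|^{1/2}$. You need this input. Second, your characterization of straightness---isometric on segments of $T_\phi$ avoiding branch points, with folding confined to branch points---is weaker than the paper's definition, which demands an isometric embedding on every \emph{nonsingular} segment, i.e.\ every $\pi$-image of a nonsingular geodesic of $\Tilde{X}$; such segments can pass through branch points of $T_\phi$. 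The stronger property is what your two-sided estimate actually delivers (apply it to nonsingular geodesics that cross singular leaves), and it is what is needed downstream, for instance to rule out folding when $\phi$ has only simple zeros; be sure to state and prove that version rather than the weaker one.
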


The notion of a \emph{straight map} is discussed in Section
\ref{sec:straight}.  For the moment we simply note that such a map is
a morphism of $\R$-trees but it may not be an isometry because certain
kinds of folding are permitted.  For differentials with simple
zeros, however, we can rule out this behavior:

\begin{bigthm}
\label{thm:main-simple}
If $\phi_n \in Q(X)$ is a divergent sequence that converges
projectively to a quadratic differential $\phi$ with only simple
zeros, then $\hol(\phi_n)$ converges in the Morgan-Shalen
compactification to the length function associated with the dual tree
$T_\phi$.

In particular there is an open, dense, co-null subset of $Q(X)$
consisting of differentials $\phi$ for which $T_\phi$ is the unique
minimal limit action on an $\R$-tree arising from sequences with
$Q(X)$ with projective limit $\phi$.
\end{bigthm}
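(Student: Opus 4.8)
The plan is to deduce Theorem~\ref{thm:main-simple} from Theorem~\ref{thm:main-folding} by upgrading the straight map to an isometry. By Theorem~\ref{thm:main-folding}, every accumulation point of $\hol(\phi_n)$ in the Morgan--Shalen boundary is represented by an $\R$-tree $T$ equipped with an equivariant, surjective straight map $f : T_\phi \to T$, so it suffices to show that when $\phi$ has only simple zeros each such $f$ is an isometry. Granting this, the argument closes quickly: an equivariant isometry conjugates the two $\Pi$-actions and hence identifies their translation length functions, so $T$ represents the boundary point $[\ell_{T_\phi}]$ for \emph{every} accumulation point. Because $\hol$ is proper, the divergent sequence $\hol(\phi_n)$ eventually leaves every compact subset of $\X(\Pi)$, so all of its accumulation points in the compact Morgan--Shalen compactification lie in the boundary; having a single one there, the sequence converges to $[\ell_{T_\phi}]$.

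Thus the entire content is a local statement: a straight map out of $T_\phi$ admits no folds when $\phi$ has simple zeros. First I would recall that a morphism of $\R$-trees is an isometry exactly when it has no folds, and then localize the folds of $f$. A regular point of the horizontal foliation of $\phi$ is an interior point of a maximal straight segment, over which $f$ is isometric by the defining property of straightness (Section~\ref{sec:straight}); hence no fold can occur there, and every fold must identify two prongs at a branch point of $T_\phi$. The branch points of $T_\phi$ are precisely the images of the zeros of $\phi$ in the universal cover, and a zero of order $k$, where the flat metric $|\phi|$ has cone angle $(k+2)\pi$, produces a branch point of valence $k+2$, its prongs corresponding to the $k+2$ horizontal sectors. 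A simple zero therefore yields a trivalent branch point: a tripod whose three prongs are separated by cone angle $\pi$.

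The crux, and the step I expect to be the main obstacle, is the folding lemma at such a tripod. A fold at the branch point $p$ would identify the germs of two of the three prongs, and the task is to read off from the straight structure exactly which prong pairs are eligible. I would argue that straightness permits folding only of prongs that are genuine straight continuations of one another across $p$, i.e. that bound complementary sectors of equal cone angle; for then, and only then, does collapsing them preserve the straight-segment structure. With total cone angle $3\pi$ distributed symmetrically among three prongs, no prong is opposite another in this sense (the required half-angle $3\pi/2$ is not realized at any prong, an obstruction of parity, since an odd number of symmetric prongs cannot be paired off). Hence the straightness condition leaves no admissible fold at $p$, so $f$ is a local isometry there; combined with the previous paragraph it is a local isometry everywhere, and a surjective equivariant local isometry of $\R$-trees is an isometry. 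Pinning down the precise class of admissible folds from the definition of straight map is where the real work lies, and it is exactly this analysis that uses the hypothesis of simple zeros.

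Finally, the ``in particular'' clause is a genericity statement independent of the limiting dynamics. The set of $\phi \in Q(X)$ all of whose zeros are simple is the complement of the locus where two zeros collide, which is a proper analytic subvariety; it is therefore closed, nowhere dense, and of measure zero, so its complement is open, dense, and co-null. For any $\phi$ in this locus the first part of the theorem applies to every divergent sequence with projective limit $\phi$, forcing $\hol$-convergence to $[\ell_{T_\phi}]$, so $T_\phi$ is the unique limiting $\R$-tree.
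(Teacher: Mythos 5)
Your global strategy coincides with the paper's: invoke Theorem~\ref{thm:main-folding}, show that for $\phi$ with only simple zeros every equivariant surjective straight map $T_\phi \to T$ is an isometry, conclude convergence via properness of $\hol$, and dispose of the genericity clause by noting that the non-simple-zero locus is a proper subvariety. The gap sits exactly where you flag ``the real work'': the no-folding statement at a trivalent branch point is asserted, not proved, and the criterion you propose for admissible folds is not the one the definition of straightness actually yields. Straightness constrains $F$ only on \emph{nonsingular} segments of $T_\phi$, i.e.\ on images of nonsingular $\phi$-geodesic segments in $\Tilde{X}$. Hence to forbid a fold identifying two prongs at the branch point $\pi(z)$ you must exhibit a nonsingular segment of $\Tilde{X}$ whose image in $T_\phi$ enters the branch point along one of those prongs and leaves along the other. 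The relevant dichotomy is therefore whether the two corresponding sectors at $z$ are adjacent across a single horizontal separatrix (so that a short nonsingular transversal crossing that separatrix supplies the witness), not whether they ``bound complementary sectors of equal cone angle''; your ``for then, and only then'' is neither derived from the definition nor equivalent to adjacency in general (already at a zero of order three the two criteria disagree), and the parity argument built on it is not the operative mechanism.

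The paper's Lemma~\ref{lem:simple} produces the missing witness concretely: a $\phi$-geodesic $J$ through a simple zero $z$ has angle at least $\pi$ on each side and total angle $3\pi$, hence angle strictly less than $2\pi$ on one side; on that side the portion of $J$ near $z$ can be isotoped along horizontal leaves to a nonsingular vertical segment with the \emph{same} image in $T_\phi$ (Figure~\ref{fig:pushoff}). Since a straight map is isometric on images of nonsingular segments, $F$ is locally injective along $\pi(J)$, and a locally injective map of an interval into an $\R$-tree is an isometric embedding onto a geodesic; this is exactly the step your proposal leaves open, and it is where the simple-zero hypothesis enters (for a zero of order $k\geq 2$ a geodesic can have angle $2\pi$ or more on both sides and the push-off fails). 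Two smaller remarks: the set of differentials with a multiple zero is a closed algebraic subvariety, so ``closed, nowhere dense, null'' is right; and for the \emph{unique limit action} (as opposed to the unique limiting length function) the paper additionally cites Culler--Morgan, though your observation that every Bestvina limit tree receives an equivariant isometry from $T_\phi$ also covers the trees that actually arise from sequences.
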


In order to pass from uniqueness of the limiting length function to a
unique limit action on a tree, the proof of this theorem uses a result of
Culler and Morgan \cite{culler-morgan}: The
tree representing a length function is determined up to equivariant
isometry except possibly when $\ell$ is an \emph{abelian length
  function} of the form $\ell(\gamma) = |\chi(\gamma)|$, where $\chi
:\Pi \to \R$ is a homomorphism.

For abelian length functions, the description of the isometry classes
of representing $\R$-trees is more complicated \cite{brown:bns}. 
However, in this case we can say more about the
corresponding quadratic differentials:

\begin{bigthm}
\label{thm:main-abelian}
Let $\chi : \Pi \to \R$ be a homomorphism.  If $\hol(\phi_n)$
converges in the Morgan-Shalen sense to the abelian length function
$|\chi|$, then the sequence $\phi_n$ converges projectively to
$\omega^2$, where $\omega \in \Omega(X)$ is a holomorphic $1$-form
whose imaginary part is the harmonic representative of $[\chi] \in
H^1(X,\R)$.
\end{bigthm}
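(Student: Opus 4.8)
The plan is to combine Theorem~\ref{thm:main-folding}, which pins down the projective limit and its horizontal foliation, with the theory of equivariant harmonic maps to $\R$-trees, which will identify the relevant holomorphic $1$-form.

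First I would reduce to a statement about a single projective limit. Since $\hol(\phi_n)$ converges to a point of the Morgan--Shalen boundary, it leaves every compact subset of $\X(\Pi)$; properness of $\hol$ then forces $\phi_n$ to diverge in $Q(X)$. After passing to a subsequence, compactness of the projective space of $Q(X)$ gives a projective limit $\phi$. It suffices to show that every such subsequential limit is proportional to $\omega^2$ for the \emph{same} form $\omega$: since the real-linear map $\Omega(X)\to H^1(X,\R)$, $\omega\mapsto[\Im\omega]$, is an isomorphism, the hypothesis that $\Im\omega$ be the harmonic representative of $[\chi]$ determines $\omega$ uniquely from $\chi$ alone, so all subsequential limits coincide in $\P Q(X)$ and the full sequence converges projectively.

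Next I would extract the holomorphic $1$-form from the limit action. By Theorem~\ref{thm:main-folding} the length function $|\chi|$ is represented by a tree $T$ receiving an equivariant surjective straight map from $T_\phi$; because $|\chi|$ is abelian, $\Pi$ preserves a line in $T$ along which it translates by $\chi$. I would then take the equivariant harmonic map $h:\tilde X\to\R$ to this line, normalized by $h(\gamma x)=h(x)+\chi(\gamma)$. Its differential descends to a closed harmonic $1$-form $\beta$ on $X$ with $[\beta]=\chi$ (the harmonic representative), and setting $\omega\define(h_y+ih_x)\,dz$ produces the holomorphic $1$-form with $\Im\omega=\beta$. A direct computation gives $\partial h=\tfrac12(h_x-ih_y)$ and hence that the Hopf differential of $h$ is $(\partial h)^2\,dz^2=-\tfrac14\,\omega^2$, proportional to $\omega^2$.

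The decisive step, and the one I expect to be the main obstacle, is the identification of this Hopf differential with the projective limit: I would show that the Hopf differential of the equivariant harmonic map to the Morgan--Shalen limit tree is a positive multiple of $\phi$. This is the analytic bridge between the holonomy degeneration and the quadratic differential, and it is exactly here that the abelian hypothesis does its work: it guarantees that the energy of the limiting map is concentrated on a line, so that no folding survives in the measured foliation of the Hopf differential and the foliation of $\phi$ is genuinely orientable. Granting this, the two computations of the Hopf differential give $\phi\propto\omega^2$, so $\phi=\omega^2$ in $\P Q(X)$ with $\omega$ as above; this also exhibits $T_\phi$ as the dual tree of an orientable foliation (all zeros of $\phi$ of even order), consistent with the straight map of Theorem~\ref{thm:main-folding} folding only at these zeros. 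Two technical points I would need to address along the way are the Culler--Morgan non-uniqueness of trees representing $|\chi|$---resolved by working on the invariant line, where the harmonic function and its energy are canonical---and the existence and regularity of the equivariant harmonic map, which I would obtain from the Korevaar--Schoen theory.
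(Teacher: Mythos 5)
Your reduction to a single subsequential projective limit, and the observation that $\chi$ determines $\omega$ uniquely (so all subsequential limits agree and the full sequence converges), are both correct and match the paper's closing argument. The gap is the step you yourself flag as decisive and then grant: that the Hopf differential of the equivariant harmonic map to the limit tree is a positive multiple of $\phi$. Nothing in your proposal supplies this. The harmonic map $h:\Tilde{X}\to\R$ you construct is determined by $\chi$ alone (an equivariant harmonic function for the shift is unique up to an additive constant, since the difference of two is $\Pi$-invariant and hence constant), so it carries no a priori information about the sequence $\phi_n$ or its projective limit $\phi$. The only thing in your hypotheses linking $\phi$ to the tree $T$ is the straight map $T_\phi\to T$ from Theorem \ref{thm:main-folding}, and your proposal never uses the straightness to extract analytic information. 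The justification you offer --- that the abelian hypothesis concentrates the energy on a line and prevents folding --- does not address the actual difficulty, which is comparing two differently-defined quadratic differentials (the limit of the Schwarzians versus the Hopf differential of a harmonic map); identifying these asymptotically is a genuinely nontrivial matter and is precisely what this paper's Epstein--Schwarz machinery is designed to circumvent. Also, an abelian action need only fix an \emph{end} of $T$, not preserve a line, so you should work with the Busemann function of that end rather than an invariant line.

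The paper closes exactly this gap by making the straight map do the analytic work. Composing the straight map $F:T_\phi\to T$ with the Busemann function $b$ of the fixed end yields a map $b\circ F$ that is again straight (Lemma \ref{lem:straight-to-R}; the proof approximates an arbitrary nonsingular segment by segments on lifted periodic geodesics, on whose images $b$ restricts to an isometry). The resulting function $f=b\circ F\circ\pi:\Tilde{X}\to\R$ is then, by straightness, of the form $\pm\Im(z)+C$ in any natural coordinate for $\phi$; hence $f$ is harmonic with $\Tilde{\phi}=4(\partial f)^2$ (Lemma \ref{lem:harmonic-representative}), and $2\partial f$ descends to the holomorphic $1$-form $\omega$ whose periods are given by $\chi$. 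In other words, the equivariant harmonic function to $\R$ that you want \emph{is} the Busemann projection of the straight map, and its Hopf differential is proportional to $\phi$ by direct computation rather than by an asymptotic comparison of harmonic-map and Schwarzian parameterizations. If you rework your argument so that $h$ is obtained from the straight map in this way (and then invoke uniqueness of equivariant harmonic functions to identify $dh$ with the harmonic representative of $[\chi]$), your outline becomes the paper's proof.
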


We remark that the existence of sequences satisfying the hypotheses of
Theorem \ref{thm:main-abelian} is itself an open question.  Using the
results of \cite{gkm} one can construct a sequence of projective
structures on surfaces of a fixed
genus %on \emph{a priori} different Riemann surfaces
converging to an abelian length function, however it is not clear whether one
can also arrange for the underlying Riemann surface to remain
constant.
Naturally it would be interesting to
resolve this issue, ideally with either an explicit construction or a
geometrically meaningful obstruction to existence; we hope to return
to this in future work.

\smallskip

\subsection*{Rate of divergence}

A key step in understanding the limiting behavior of the holonomy
representations is to understand the \emph{rate} at which they diverge
as $\phi \to \infty$.  When equipped with the metric $|\phi|$, the
Riemann surface $X$ becomes a singular Euclidean surface whose
diameter is comparable to $\|\phi\|^{1/2}$.  We show that this is the
natural scale to use in understanding the action of $\hol(\phi)$ on
$\H^3$ by isometries:

\begin{bigthm}
\label{thm:main-proper}
The scale of the holonomy representation $\hol(\phi)$ is comparable to
$\|\phi\|^{1/2}$, i.e.
\begin{enumerate}
\item The translation length of any element of $\Pi$ in $\hol(\phi)$
acting on $\H^3$ is $O(\|\phi\|^{1/2})$, and
\item There is an element $\gamma \in \Pi$ whose translation length in
$\hol(\phi)$ is at least $c \|\phi\|^{1/2}$, where $c>0$ is a uniform
constant.
\end{enumerate}
\end{bigthm}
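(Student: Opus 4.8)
The plan is to identify $\hol(\phi)$ with the monodromy of the second-order \emph{oper} equation $u'' + \tfrac12\phi\,u = 0$ attached to the projective structure (so that the ratio of two solutions is the developing map), and then to estimate that monodromy from the geometry of the flat metric $|\phi|$. The key structural observation is that in a natural coordinate $z$ for $\phi$ (where $\phi = dz^2$) the equation is the companion system of the \emph{constant} matrix $M=\smat{0}{1}{-1/2}{0}$, whose eigenvalues are purely imaginary ($\pm i/\sqrt2$ in this normalization). Passing from $\phi$ to $t\phi$ merely rescales the natural coordinate by $\sqrt t$, so the connection form is the same constant matrix per unit of $|\phi|$-length, \emph{independent of the scale}. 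Since an element $A\in\SL_2(\C)$ acting on $\H^3$ has translation length $\ell(A)\leq d(x_0,A\cdot x_0)=2\log\|A\|+O(1)$ for a fixed basepoint $x_0$, the entire problem reduces to estimating the operator norm of parallel transport.

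For the upper bound (part (1)) I would use a Gronwall-type estimate bounding the norm of transport along a path $p$ by $\exp\bigl(C\cdot\length_{|\phi|}(p)\bigr)$, where $C$ is universal because the connection has bounded norm per unit flat length away from the zeros, and the finitely many zeros of $\phi$ contribute only a bounded factor across flat balls of controlled radius. Taking $p$ to be the $|\phi|$-geodesic representative of $\gamma$, it remains to check $\length_{|\phi|}(\gamma)\leq C(\gamma)\,\|\phi\|^{1/2}$; this follows from the homogeneity $\length_{|t\phi|}=\sqrt t\,\length_{|\phi|}$ together with continuity and compactness on the projectivized unit sphere $\P Q(X)$, which is exactly the mechanism already underlying the stated comparison $\diam_{|\phi|}(X)\asymp\|\phi\|^{1/2}$.

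The lower bound (part (2)) is the substantial half: I must produce a single loop whose monodromy is genuinely loxodromic with large eigenvalue. The model computation in a flat region shows that transport along a straight segment with period $P=\int\sqrt\phi$ has eigenvalue moduli $e^{\pm\Im(P)/\sqrt2}$, so a loop with $|\Im\int_\gamma\sqrt\phi|\gtrsim\|\phi\|^{1/2}$ along which no cancellation occurs would give translation length $\gtrsim\|\phi\|^{1/2}$. To locate such a loop I would use $\diam_{|\phi|}(X)\gtrsim\|\phi\|^{1/2}$ to find a long $|\phi|$-geodesic corridor transverse to the horizontal foliation (hence with large $|\Im\int\sqrt\phi|$), and then estimate its monodromy by a WKB/exact-WKB analysis; uniformity of the constant $c$ again follows by compactness of $\P Q(X)$.

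I expect the main obstacle to be precisely this last estimate. Away from the zeros the two solutions split cleanly into exponentially growing and decaying branches, but across the turning points (zeros of $\phi$) the Stokes phenomenon can mix the branches and, in principle, cancel the exponential growth. The crux is therefore to choose the loop so that its monodromy is dominated by a single growing branch and to control the subdominant terms and the WKB error rigorously, so that the estimate $e^{|\Im\int_\gamma\sqrt\phi|/\sqrt2}$ survives as a genuine \emph{lower} bound on the eigenvalue and not merely an upper bound; the crude transport estimate used for part (1) is insufficient here and must be replaced by a one-sided comparison in a carefully chosen gauge.
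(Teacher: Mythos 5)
Your route through the oper equation $u''+\tfrac12\phi u=0$ is genuinely different from the paper's, which works instead with the Epstein--Schwarz map $\ep_\phi=\epst(f,|2\phi|^{1/2}):\Tilde{X}\to\H^3$, an explicit equivariant surface whose first and second fundamental forms are computed in terms of $\phi$ and the correction term $\flat=2\B(\hyp,|\phi|^{1/2})$. Your upper bound is essentially sound and parallels the paper's (realize $\gamma$ by a curve of $|\phi|$-length $O(\|\phi\|^{1/2})$, then apply a Lipschitz/transport estimate), though two details deserve more care than you give them: the naive Gronwall bound on the companion system gives $\exp\bigl(\int(1+|q|)\bigr)$, not $\exp\bigl(\int|q|^{1/2}\bigr)$, so to get growth controlled by flat length you must pass to the $|q|^{1/4}$-rescaled gauge, which degenerates exactly at the zeros; and the ``bounded factor'' across a flat ball around a zero is only bounded after a further rescaling whose gauge cost is polynomial in $\|\phi\|$ (harmless, contributing $O(\log\|\phi\|)$, but not free).

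The genuine gap is the lower bound, and you have correctly located it yourself: your proposal ends by naming the Stokes/cancellation problem as ``the crux'' without resolving it, so part (2) is not proved. Two ingredients from the paper are missing. First, the choice of loop: the paper does not use an arbitrary ``long corridor transverse to the horizontal foliation'' but invokes density of periodic directions (Theorem \ref{thm:dense}, via \cite{bgkt}) to produce, uniformly over the unit sphere of $Q(X)$, a \emph{closed nonsingular} nearly-vertical trajectory sitting in a flat annulus of width at least $w_0\|\phi\|^{1/2}$ (Theorem \ref{thm:finite-periodic}). Staying a definite flat distance from $Z_\phi$ is exactly what eliminates turning points from the analysis, so there is no Stokes mixing to control. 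Second, the one-sided estimate itself: the paper shows (Theorem \ref{thm:maingeom}, Lemma \ref{lem:leafcurvature}, Theorem \ref{thm:height-estimate}) that the image of such a trajectory is a path of small curvature, hence locally a $(K,C)$-quasigeodesic with $(K,C)$ close to $(1,0)$, and then upgrades this to a global quasigeodesic by the local-to-global principle in $\H^3$ (Lemma \ref{lem:local-to-global}) and to a statement about the axis of $\rho_\phi(\gamma)$ by quasigeodesic stability (Lemma \ref{lem:stability}). It is this hyperbolic-geometry mechanism --- not an asymptotic expansion of solutions --- that converts ``no contraction in the vertical direction'' into a genuine lower bound on translation length. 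If you want to stay with the WKB language, you would need to prove a quantitative error bound for the WKB approximation along a closed nonsingular vertical trajectory, uniform in $\|\phi\|$, showing the transfer matrix is conjugate to a diagonal matrix with entry of modulus at least $e^{c\,h}$ where $h$ is the height; as written, that step is an acknowledged hope rather than an argument.
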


These statements are made precise in Theorem \ref{thm:length-bounds}
below.

Ultimately, our understanding of translation lengths of elements of
$\hol(\phi)$ acting on $\H^3$ comes from the construction of a
well-behaved map $\Tilde{X} \to \H^3$ that takes nonsingular
$|\phi|$-geodesics to nearly-geodesic paths in $\H^3$ parameterized
with nearly-constant speed.  (These maps are discussed in somewhat
more detail below, with the actual construction appearing in Section
\ref{sec:epstein}.)  When applied to a nonsingular $|\phi|$-geodesic
axis of an element $\gamma \in \Pi$, equivariance of the construction
shows that $|\phi|$-length of $\gamma$ in $X$, which is comparable to
$\|\phi\|^{1/2}$, is also comparable to the translation length in
$\H^3$.

Theorem \ref{thm:main-proper} gives another proof of the properness of
the holonomy map on $Q(X)$ (see also \cite[Thm.~11.4.1]{gkm},
\cite{tanigawa:divergence}) which is effective in the sense that it
includes an explicit growth estimate.  In Theorem
\ref{thm:effective-properness} we describe this \emph{effective
  properness} result in terms of an affine embedding of $\X(\Pi)$ and
an arbitrary norm on $Q(X)$.

\subsection*{Equivariant surfaces in $\H^3$}

The proofs of the main theorems are based on the analysis of surfaces
in hyperbolic $3$-space associated to complex projective structures.
The basic construction is due to Epstein \cite{epstein:envelopes}:
Starting from an open domain embedded in $\CP^1$ and a conformal
metric, one forms a surface in $\H^3$ from the envelope of a family of
horospheres.  The metric can be recovered from this surface by a
``visual extension'' procedure.

A natural generalization of this construction applies to a Riemann
surface that immerses (rather than embeds) in $\CP^1$ and a
conformal metric on the surface.  In our variant of Epstein's
construction, a single holomorphic quadratic differential $\phi \in
Q(X)$ provides \emph{both} of these data; the immersion is the
developing map of the projective structure with Schwarzian derivative $\phi$, and the
conformal metric is a multiple of the singular Euclidean metric
$|\phi|$.  The resulting \emph{Epstein-Schwarz map}
$\ep_\phi: \Tilde{X} \to \H^3$ is equivariant with respect to $\hol(\phi)$.

Using an explicit formula for the Epstein-Schwarz map we show that
when $\|\phi\|$ is large, this map shares key geometric properties
with the projection of $\Tilde{X}$ onto the dual tree $T_\phi$.
Namely, vertical trajectories of $\phi$ are mapped near geodesics in $\H^3$,
while compact segments on horizontal trajectories of $\phi$ are collapsed to
sets of small diameter.  These estimates are uniform outside a small
neighborhood of the zeros of $\phi$.

The main theorems are derived from these properties of Epstein-Schwarz
maps using a description of the Morgan-Shalen compactification in
terms of asymptotic cones of $\H^3$ (as in \cite{kapovich-leeb},
\cite{chiswell}).  We show that the sequence of Epstein-Schwarz maps
to $\H^3$ converge in a suitable sense to a limit map into an
$\R$-tree representing the limit of holonomy representations, and that
the local collapsing behavior described above leads to the global
straight map $T_\phi \to T$ from Theorem \ref{thm:main-folding}.

\subsection*{Comparison with other techniques}

The technique of relating the trajectory structure and Euclidean
geometry of a quadratic differential to the collapsing behavior of an
associated map has been used extensively in the study of harmonic maps
from hyperbolic surfaces to negatively curved spaces (including
$\H^2$, $\H^3$, and $\R$-trees), beginning with the work of Wolf
\cite{wolf:thesis} \cite{wolf:high-energy} on the Thurston
compactification of Teichm\"uller space.  More recently,
Daskalopoulos, Dostoglou, and Wentworth \cite{ddw:morgan-shalen}
studied the Morgan-Shalen compactification of the $\SL_2(\C)$ character
variety using harmonic maps, and our analysis of geometric limits of
Epstein-Schwarz maps follows a similar outline to their
investigation of equivariant harmonic maps to $\H^3$.

While harmonic maps techniques have been useful in the study of
complex projective structures (e.g.~\cite{tanigawa:grafting}
\cite{tanigawa:divergence} \cite{scannell-wolf:grafting}
\cite{dumas:antipodal}), for the purposes of Theorems
\ref{thm:main-folding}--\ref{thm:main-proper} the Epstein-Schwarz
maps have the advantage of a direct connection to the parameterization
of the space of projective structures by quadratic differentials.  In
addition, while harmonic maps are implicitly defined by minimization
of a functional (or solution of an associated PDE), the
Epstein-Schwarz map is given by an explicit formula which can be
analyzed directly, simplifying the derivation of our geometric
estimates.

\subsection*{Relating compactifications}
Our results show that it is natural to compare the compactification by
rays $\overline{Q(X)} = Q(X) \sqcup \PQ(X)$, where $\PQ(X) = (Q(X)
\setminus \{0\}) / \R^+$, with the closure of $\hol(Q(X))$ in the
Morgan-Shalen compactification $\overline{\X(\Pi)}$.  In terms of
these compactifications, Theorem \ref{thm:main-simple} can be
rephrased as

\addtocounter{bigcor}{\value{bigthm}}
\begin{bigcor}
\label{cor:main-compactification}
There is an open, dense, full-measure subset of $\partial
\overline{Q(X)}$ to which $\hol$ extends continuously as a map into
the Morgan-Shalen compactification $\overline{\X(\Pi)}$.  On this subset, the
extension of $\hol$ sends a ray $[\phi]$ of quadratic differentials to
the length function of the action of $\Pi$ on the dual tree of $\phi$.
\end{bigcor}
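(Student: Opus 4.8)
The plan is to derive Corollary \ref{cor:main-compactification} as a more or less formal consequence of Theorem \ref{thm:main-simple}, together with the structural facts about $T_\phi$ recorded in Section \ref{sec:straight}. The statement is a rephrasing, so the real content has already been established; what remains is to identify the correct subset of $\partial\overline{Q(X)} = \PQ(X)$ and to verify that the resulting map is genuinely continuous there, not merely defined pointwise.

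\begin{proof}[Proof proposal]
First I would specify the subset of $\PQ(X)$ in question: it is the image in $\PQ(X)$ of the set of quadratic differentials with only simple zeros. Since having a simple zero at a given point is an open condition and the locus of differentials with a repeated zero is a proper subvariety (a discriminant-type condition), this set is open, dense, and of full measure in $\PQ(X)$; I would record this as the geometric input, noting that it descends to the projectivization because the zero divisor is scale-invariant. This matches the subset appearing in Theorem \ref{thm:main-simple}.

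Next, on this subset I would \emph{define} the extension of $\hol$ by sending a ray $[\phi]$ to the length function of the action of $\Pi$ on the dual tree $T_\phi$. Theorem \ref{thm:main-simple} is exactly the assertion that this is well-defined as a limiting value: for any divergent sequence $\phi_n$ with projective limit $\phi$ (in particular the ray $\R^+\phi$ itself), $\hol(\phi_n)$ converges in $\overline{\X(\Pi)}$ to precisely the length function of $T_\phi$. So the boundary value of $\hol$ along \emph{every} approach to $[\phi]$ from inside $Q(X)$ exists and equals this length function; this is what upgrades a pointwise definition to a continuous extension across the boundary relative to sequences entering from the interior.

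The main obstacle, and the step requiring the most care, is genuine continuity \emph{within} the boundary, i.e.\ continuity of the map $[\phi]\mapsto [\ell_{T_\phi}]$ as $[\phi]$ varies over the simple-zero locus of $\PQ(X)$, together with compatibility of interior and boundary limits. For this I would invoke continuity of the assignment $\phi \mapsto T_\phi$ (equivalently, continuity of the horizontal measured foliation and its associated length function $\gamma \mapsto \ell_{T_\phi}(\gamma) = i(\gamma,\mathcal{F}_\phi)$ as a function on $\Pi$) on the locus of simple-zero differentials, where the dual tree depends continuously on $\phi$ in the length-function topology. Combined with the convergence supplied by Theorem \ref{thm:main-simple}, a standard diagonal argument then shows that $\hol$ extends continuously to $Q(X)$ together with this boundary subset: given any sequence in $\overline{Q(X)}$ converging to a boundary point $[\phi]$ in the simple-zero locus, one passes to subsequences approaching along rays and applies Theorem \ref{thm:main-simple} to each, using the continuity of $T_\phi$ to conclude that all limits agree. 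Finally I would note that Theorem \ref{thm:main-simple} already identifies $T_\phi$ as the unique such limiting tree on this subset, so the extension is unambiguous and the corollary follows.
\end{proof}
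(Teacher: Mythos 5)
Your proposal is correct and matches the paper's intent exactly: the paper gives no separate proof, presenting the corollary as a direct rephrasing of Theorem \ref{thm:main-simple}, and your argument supplies precisely the implicit details (openness, density, and nullity of the simple-zero locus descending to $\PQ(X)$, plus existence of the boundary limit from Theorem \ref{thm:main-simple} applied to arbitrary divergent sequences with projective limit $\phi$). The one step you flag as delicate---continuity within the boundary---is handled correctly by your appeal to continuity of the horizontal foliation and its intersection-number length function, though it also comes for free from general topology: since $\overline{\X(\Pi)}$ is compact Hausdorff and $Q(X)$ is dense in $Q(X)\cup S$, an extension defined by limits that exist at every point of the boundary subset $S$ is automatically continuous on $Q(X)\cup S$.
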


We also note that this extension is injective: A holomorphic quadratic
differential $\phi$ is determined by its horizontal measured
lamination $\lambda$ \cite{hubbard-masur}, and $\lambda$ is determined
by its intersection function $\left ( \gamma \mapsto i(\lambda,\gamma)
\right )_{\gamma \in \Pi}$, which is the length function of $\Pi$
acting on the dual tree of $\phi$.

While this gives a description of the limiting behavior of $\hol$ at
most boundary points, our results leave open the possibility that
there exist divergent sequences having the same projective limit in
$Q(X)$ but whose associated holonomy representations have distinct
limits in the Morgan-Shalen compactification of $\X(\Pi)$.  While we
suspect that this phenomenon occurs for some sequences (necessarily
converging to differentials with higher-order zeros), we do not know
of any explicit examples of this behavior.

\subsection*{Applications and related results}

The space $\ML(X)$ of measured geodesic laminations embeds in the
Morgan-Shalen boundary of $\X(\Pi)$, with image consisting of the
length functions associated to the trees $\{ T_\phi \: | \: \phi \in
Q(X)\}$.  In \cite{dumas-kent:dense}, Kent and the author showed that
the closure of $\hol(Q(X))$ in the Morgan-Shalen compactification
contains $\ML(X)$ by examining the countable subset of $Q(X)$ whose
associated holonomy representations are Fuchsian.  Theorem
\ref{thm:main-simple} (or Corollary \ref{cor:main-compactification})
gives an alternate proof of this result.

As in \cite{dumas-kent:dense}, our investigation of $\hol(Q(X))$ was
motivated in part by a connection to Thurston's skinning maps of
hyperbolic $3$-manifolds.  In \cite{dumas:finite}, the results of this
paper are used in the proof of:

\begin{nonumthm}
Skinning maps are finite-to-one.
\end{nonumthm}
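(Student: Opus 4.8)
The plan is to convert the finiteness of a fiber of the skinning map into a statement about holonomy limits of projective structures on a \emph{fixed} Riemann surface, where the theorems above apply directly. Recall the relevant setup: $M$ is a compact hyperbolizable $3$-manifold with nonempty incompressible boundary, its convex cocompact hyperbolic structures are parameterized by the conformal boundary $\mathcal{T}(\partial M)$, and for $X \in \mathcal{T}(\partial M)$ with holonomy $\rho_X : \pi_1 M \to \PSL_2(\C)$ and each component $S \subset \partial M$, the cover corresponding to $\pi_1 S$ is quasi-Fuchsian with conformal boundary $(X|_S,\, \sigma_M(X)|_S)$. Fixing a value $Y = \sigma_M(X)$ therefore fixes each Riemann surface $\overline{Y|_S}$, and the inclusion of its domain of discontinuity into $\CP^1$ makes $\overline{Y|_S}$ into a projective structure whose holonomy is $\rho_X|_{\pi_1 S}$. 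Encoding this projective structure by its Schwarzian $\phi_{X,S} \in Q(\overline{Y|_S})$ yields a map $\Phi : \sigma_M^{-1}(Y) \to \prod_S Q(\overline{Y|_S})$. I would first note that $\Phi$ is injective: from $\phi_{X,S}$ one recovers the quasi-Fuchsian group $\rho_X(\pi_1 S)$ and hence its \emph{other} conformal boundary $X|_S$, and ranging over all $S$ recovers $X$. So it suffices to prove that the image of $\Phi$ is finite.

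Next I would reduce to a single divergent sequence. An infinite fiber must contain a sequence $X_n$ leaving every compact subset of $\mathcal{T}(\partial M)$: a non-discrete fiber is a positive-dimensional analytic set, which is unbounded since the Kobayashi-hyperbolic $\mathcal{T}(\partial M)$ carries no compact positive-dimensional subvarieties, while a discrete infinite fiber escapes by definition. Since $\overline{Y|_S}$ is fixed and $X_n|_S \to \infty$ for some component $S$, the quasi-Fuchsian holonomies $\rho_{X_n}|_{\pi_1 S} = \hol(\phi_{X_n,S})$ diverge, so by Theorem \ref{thm:main-proper} (properness of $\hol$) we have $\|\phi_{X_n,S}\| \to \infty$, with translation lengths comparable to $\|\phi_{X_n,S}\|^{1/2}$. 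Passing to a subsequence with projective limit $\phi_{X_n,S}/\|\phi_{X_n,S}\| \to \psi_S$, Theorem \ref{thm:main-folding} identifies every Morgan-Shalen limit of $\rho_{X_n}|_{\pi_1 S}$ as a nontrivial $\R$-tree admitting a surjective straight map from the dual tree $T_{\psi_S}$ of a quadratic differential on the \emph{fixed} surface $\overline{Y|_S}$; in the generic simple-zero case Theorems \ref{thm:main-simple}--\ref{thm:main-abelian} pin this limit down to $T_{\psi_S}$ itself, and Corollary \ref{cor:main-compactification} makes the resulting length function, hence the underlying lamination, unique.

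The contradiction, and the step I expect to be the main obstacle, is to confront this tree with the $3$-dimensional geometry. The full sequence $\rho_{X_n} : \pi_1 M \to \PSL_2(\C)$ also diverges, and (after a further subsequence) its Morgan-Shalen limit is a small action of $\pi_1 M$ on an $\R$-tree dual to an essential lamination in $M$, whose restriction to each $\pi_1 S$ must coincide with the tree found above. The crux is that these two descriptions of the $\pi_1 S$-tree are incompatible once the skinned structure $Y$ is held fixed: the tree is, on one hand, dual to a measured foliation on the fixed surface $\overline{Y|_S}$ carrying the full divergence of the holonomy at scale $\|\phi_{X_n,S}\|^{1/2}$, and on the other hand it is the boundary trace of a degeneration of the (acylindrical) manifold $M$, whose possible limiting laminations are rigidly constrained by the topology of $M$ and the fixed value of $\sigma_M$. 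Establishing this incompatibility --- using injectivity of the boundary extension of $\hol$ (Corollary \ref{cor:main-compactification}) to match laminations, together with the theory of algebraic and geometric limits of convex cocompact representations of $\pi_1 M$ --- is the heart of the argument. Once it is in place no escaping sequence can exist, so $\sigma_M^{-1}(Y)$ is bounded and discrete, hence finite.
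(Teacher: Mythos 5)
First, note that this paper does not actually prove the finiteness theorem: it is proved in the companion paper \cite{dumas:finite}, and what appears here is only a one-paragraph outline. So the comparison below is against that outline.

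Your setup (fixing the skinned surface $\overline{Y|_S}$, recording the Schwarzian of the induced projective structure, and reducing an infinite fiber to a sequence escaping every compact set) is consistent with the paper's sketch, but the step where you try to engage Theorems \ref{thm:main-folding}--\ref{thm:main-proper} fails for a concrete reason. The Schwarzian $\phi_{X_n,S}\in Q(\overline{Y|_S})$ of the projective structure that the quasi-Fuchsian group $\rho_{X_n}(\pi_1 S)$ induces on the \emph{fixed} boundary surface is precisely the Bers embedding of $X_n|_S$ based at $\overline{Y|_S}$. By Nehari's theorem the Bers slice is a \emph{bounded} subset of $Q(\overline{Y|_S})$, and correspondingly the restricted holonomies $\rho_{X_n}|_{\pi_1 S}$ lie in a precompact subset of $\X(\pi_1 S)$ (Bers's compactness theorem). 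So even though $X_n$ leaves every compact set of $\mathcal{T}(\partial M)$, the differentials $\phi_{X_n,S}$ do \emph{not} diverge, the holonomies do not leave every compact set of the character variety, and there is no Morgan--Shalen limit to analyze. Your appeal to Theorem \ref{thm:main-proper} is also run in the wrong direction: properness gives $\phi\to\infty \Rightarrow \hol(\phi)\to\infty$, whereas you would need the (false here) divergence of $\hol(\phi_{X_n,S})$ to conclude divergence of $\phi_{X_n,S}$. This is exactly the obstacle the actual argument is built to circumvent: one considers the analytic subvariety of $Q(\overline{Y|_S})$ cut out by the condition that the holonomy extends to $\pi_1 M$. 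A positive-dimensional fiber forces this locus to be positive-dimensional, and a positive-dimensional closed analytic subset of the vector space $Q(X)$ is unbounded; this produces the unbounded analytic curve $\mathcal{C}\subset Q(X)$ of the paper's sketch, whose far-out points are no longer convex cocompact (nor in the Bers slice) but still have extending holonomy, and to which Theorem \ref{thm:main-folding} genuinely applies.

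The endgame is also not right as proposed. You hope that a \emph{single} divergent sequence with extending holonomy already yields a contradiction by confronting the dual tree $T_{\psi_S}$ with a small $\pi_1 M$-tree; but a single Morgan--Shalen limit point satisfying the extension constraint is perfectly consistent (and the extended limit representations along $\mathcal{C}$ need not be discrete, so smallness of the $\pi_1 M$-action is not available). If your claim ``no escaping sequence can exist'' were provable this way, the fiber would be compact, and a compact analytic subvariety of Teichm\"uller space is automatically finite---the whole difficulty would evaporate. The paper's outline makes clear that the contradiction is extracted not from one limit point but from the entire limit set of the analytic curve $\mathcal{C}$ in $\PQ(X)$, using Theorem \ref{thm:main-folding} to constrain which projective classes $[\psi]$ can occur and then ``analytic and symplectic geometry in $Q(X)$'' to show that the boundary of an analytic curve cannot be confined to such a set. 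Both the analytic-continuation step and this global symplectic argument are absent from your proposal, and the former is needed even to get the machinery of Sections \ref{sec:epstein}--\ref{sec:final} off the ground.
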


Briefly, the connection between this result and holonomy of projective
structures is as follows: If the skinning map of a $3$-manifold $M$
with incompressible boundary $S$ had an infinite fiber, then there
would be a conformal structure $X$ on $S$ and an analytic curve
$\mathcal{C} \subset Q(X)$ consisting of projective structures whose
holonomy representations extend from $\pi_1(S)$ to $\pi_1(M)$.  This
extension condition constrains the limit points of $\hol(\mathcal{C})$
in the Morgan-Shalen compactification, and Theorem
\ref{thm:main-folding} is a key step in translating this into a
constraint on $\mathcal{C}$ itself.  Using analytic and symplectic
geometry in $Q(X)$, it is shown that these constraints are not
satisfied by any analytic curve, giving the desired contradiction.

\subsection*{Outline}

Section \ref{sec:background} contains background material on quadratic
differentials and projective structures, as well as some simple
estimates related to geodesics of quadratic differential metrics.

In Section \ref{sec:epstein} we introduce Epstein maps and specialize
to the case of interest, the Epstein-Schwarz map associated to a
quadratic differential.  The asymptotic behavior of sequences of such
maps is studied in Section \ref{sec:sequences}.

In Section \ref{sec:properness} we discuss the character variety and
then apply the estimates of the previous section to bound the size of
the holonomy representation.  We also give a new proof of the
properness of the holonomy map.

Finally, in Section \ref{sec:final} we discuss the Morgan-Shalen
compactification, dual trees of quadratic differentials, and straight
maps.  We then assemble the proofs of the main theorems from results
of sections \ref{sec:background}--\ref{sec:sequences}.

\subsection*{Acknowledgments}

The author thanks Peter Shalen and Richard Wentworth for helpful
conversations, and Richard Kent for asking interesting questions about
skinning maps that motivated some of this work.  The author also
thanks the anonymous referees for their careful reading and helpful
comments.

\section{Projective structures and quadratic differentials}
\label{sec:background}

\subsection{Projective structures}
Let $X$ be a compact Riemann surface of genus $g \geq 2$.  A (complex)
\emph{projective structure} on $X$ is a maximal atlas of conformal charts
mapping open sets in $X$ into $\CP^1$ whose transition functions are
the restrictions of M\"obius transformations.  Equivalently, a $\CP^1$
structure on $X$ can be specified by a locally injective holomorphic
map $f : \Tilde{X} \to \CP^1$, the \emph{developing map}, such that
for all $\gamma \in \Pi$ and $x \in \Tilde{X}$, we have
$$ f( \gamma \cdot x) = \rho(\gamma) \cdot f(x)$$
where $\rho : \Pi \to \PSL_2(\C)$ is a homomorphism, called the \emph{holonomy
  representation}.  The pair $(f,\rho)$ is uniquely determined by the
projective structure up to an element $A \in \PSL_2(\C)$, which acts by
$(f,\rho) \to (A \circ f, A \rho A^{-1})$.  For further discussion of
projective structures and their moduli see \cite[Ch.~7]{kapovich:book}
\cite{gunning:affine-projective} \cite{dumas:survey}.

While the holonomy representation naturally takes values in
$\PSL_2(\C)$, the representations that arise from projective
structures admit lifts to the covering group $\SL_2(\C)$
\cite[Sec.~1.3]{gkm}.  Furthermore, by choosing a spin structure on
$X$ it is possible to lift the holonomies of all projective structures
consistently (and continuously).  We will assume from now on that such a
structure has been fixed and so we consider only maps to $\SL_2(\C)$.

\subsection{Parameterization by quadratic differentials}
The space $P(X)$ of projective structures on $X$ is naturally an
affine space modeled on the vector space $Q(X)$ of holomorphic
quadratic differentials on $X$. The identification of the universal cover
$\Tilde{X}$ with the upper half-plane $\H$ induces the \emph{standard
  Fuchsian projective structure}, and this basepoint gives a
well-defined homeomorphism $P(X) \to Q(X)$.

This map sends a projective structure to the quadratic
differential $\phi \in Q(X)$ whose lift $\Tilde{\phi}$ to the
universal cover $\Tilde{X} \simeq \H$ satisfies
$$ \Tilde{\phi} = S(f) = \left ( \left(\frac{f''}{f'}\right)' -
  \frac{1}{2}\left(\frac{f''}{f'}\right)^2 \right )\:dz^2.$$
Here $S(f)$ is the \emph{Schwarzian derivative} of a developing
map $f$ of the projective structure.

\subsection{Developing a quadratic differential}

The inverse map $Q(X) \to P(X)$ can be
constructed as follows (following \cite[Ch.~2]{anderson:thesis}; see
also \cite{thurston:zippers}).  Given a quadratic differential $\phi \in
Q(X)$ with lift $\Tilde{\phi} \in Q(\H)$, we have the associated
$\sl_2(\C)$-valued holomorphic $1$-form
$$\omega_\phi = \frac{1}{2} \Tilde{\phi}(z)
\begin{pmatrix}
-z & 1\\-z^2 & z
\end{pmatrix}
dz.
$$
This form satisfies the structural equation $d \omega_\phi +
\frac{1}{2} [\omega_\phi,\omega_\phi] = 0$ because a Riemann surface
does not admit any nonzero holomorphic $2$-forms.  Thus there exists a
map $M_\phi : \Tilde{X} \to \SL_2(\C)$ whose Darboux derivative is
$\omega_\phi$ (see \cite[Thm.~7.14]{sharpe} for details), i.e. such that
$$\omega_\phi = M_\phi^{-1} dM_\phi.$$
This map is unique up to translation by an element of $\SL_2(\C)$.

The \emph{developing map} of $\phi$ is the holomorphic map $f_\phi :
\H \to \CP^1$ defined by
$$ f_\phi(z) = M_\phi(z) \cdot z$$
where in this expression $M_\phi(z)$ is considered as acting on
$\CP^1$ as a M\"obius transformation.  Of course the map $f_\phi$ is
only defined up to composition with a M\"obius map, but we speak
of \emph{the} developing map when the particular choice is not
important.

The map $f_\phi$ satisfies $S(f_\phi) = \Tilde{\phi}$ and is
equivariant with respect to the holonomy representation $\rho_\phi$
that is defined by the condition
$$ \rho_\phi(\gamma) M_\phi(z) = M_\phi(\gamma \cdot z) \rho_0(\gamma) $$
for all $\gamma \in \Pi$ and any $z \in \H$.  That
the choice of $z$ does not matter follows from the invariance of
$\omega_\phi$ under the action of $\Pi$ coming from the deck action on
$\Tilde{X}$ and the $\Ad \circ \rho_0$-action on $\sl_2(\C)$.  One can
think of $\rho_\phi(\gamma)$ as the ``non-abelian
period'' of the $1$-form $\omega_\phi$ along the loop $\gamma$ in $X$.

\subsection{Conformal and Riemannian metrics}
\label{subsec:conformal-metrics}

Given a Riemann surface $X$ with canonical line bundle $K$, a
\emph{conformal metric on $X$} is a continuous, nonnegative section
$\sigma$ of $K^{1/2} \tensor \overline{K^{1/2}}$ with
  the property that the function $ d_\sigma(x,y)= \inf_{\gamma:
    ([0,1],0,1) \to (X,x,y)} \int_\gamma \sigma$ defines a metric on
  $X$.  With respect to a local complex coordinate chart $z$ in which
  $\sigma$ is nonzero, we can write $\sigma = e^{\eta(z)} |dz|$ where
  $\eta$ is the \emph{log-density} of $\sigma$.  The metrics we
  consider will only vanish at finitely many points, and we extend
  $\eta$ to these points by defining $\eta(z) = -\infty$ if $\sigma(z)
  = 0$.

  A conformal metric is of class $C^k$ if it is nonzero and its
  log-density function in any chart is $k$ times continuously
  differentiable.  The Gaussian curvature of a $C^2$ conformal metric
  is given by
$$ K(z) = -4 e^{-2\eta} \eta_{z \bar{z}}$$
where subscripts denote differentiation.  Further discussion of
conformal metrics on Riemann surfaces can be found in \cite{huber:subharmonic}
\cite[Sec.~1.5, 4.1]{ahlfors:conformal-invariants}
\cite[Sec.~2.3]{jost:compact-riemann-surfaces}.

For any $\phi \in Q(X)$, the line element $|\phi|^{1/2}$ defines a
conformal metric on $X$ that is $C^\infty$ and flat ($K = 0$) away
from the set of zeros $Z_\phi = \phi^{-1}(0)$; this is a
\emph{quadratic differential metric}.  (See \cite[Ch.~III]{strebel:book} for
detailed discussion of such metrics.)  The total area is
$$\|\phi\|  = \int_X |\phi|,$$
which is the conformally natural $L^1$ norm on $Q(X)$.  A zero of $\phi$ of
order $k$ is a cone point of the metric $|\phi|^{1/2}$ with cone angle
$(k+2)\pi$.

For brevity we will sometimes refer to either the area form $|\phi|$
or the length element $|\phi|^{1/2}$ as the \emph{$\phi$-metric}.

\subsection{Quadratic differentials foliations and development}
\label{sec:qd-development}

Away from a zero of $\phi \in Q(X)$, there is always a local
\emph{natural coordinate}
$z$ such that $\phi = dz^2$.  Such a coordinate is unique up to
translation and $z \mapsto -z$.  Pulling back the lines in $\C$
parallel to $e^{i \theta} \R$ gives the \emph{foliation of angle
  $\theta$}, denoted $\F_\theta(\phi)$, which extends to a singular
foliation of $X$ with $(k+2)$-prong singularities at the zeros of
$\phi$ of order $k$.

The special cases $\theta =0,\pi/2$ are the
\emph{horizontal} and \emph{vertical} foliations, respectively.  We
sometimes abbreviate $\F(\phi) = \F_0(\phi)$. Each of these foliations
has a transverse measure coming from the natural coordinate
charts (e.g.~the vertical variation measure $|dy|$ for the horizontal
foliation).  Given a curve in $X$, we refer to its total measure with
respect to the horizontal foliation (resp.~vertical foliation) as its
\emph{height} (resp.~\emph{width}).

A path $\gamma : [0,1] \to X$ with interior disjoint from the zeros of
$\phi$ can be developed into $\C$ using local natural coordinate
charts.  The difference between the images of $\gamma(1)$ and
$\gamma(0)$ is the \emph{holonomy} of $\gamma$, which is well-defined up
to sign.  For example, the holonomy of a line segment with height $h$
and width $w$ is $\pm(w + i h)$.  Note that this holonomy construction
is equivalent to integrating the locally-defined $1$-form
$\sqrt{\phi}$; this should be contrasted with the integration of the
$1$-form $\omega_\phi$ used to construct the developing map $f_\phi$.
The interplay between these two integration constructions is an
underlying theme in our analysis of the Epstein-Schwarz map in
later sections.

\subsection{Quadratic differential geodesics}

Each free homotopy class of simple closed closed curves on $X$ can be
represented by a length-minimizing geodesic for the metric $|\phi|$,
which consists of a finite number of line segments joining zeros of
$\phi$.  The geodesic representative is unique unless it is a closed
leaf of $\F_\theta(\phi)$ for some $\theta \in S^1$, in which case
there is a cylinder foliated by parallel geodesic representatives.  In
the latter case we say the the geodesic is \emph{periodic}.

Similarly, any pair of points in $\Tilde{X}$ can be joined by a unique
geodesic segment for the lifted singular Euclidean metric
$|\Tilde{\phi}|$, which again consists of line segments
joining the zeros.  If such a geodesic segment does not contain any
zeros, it is \emph{nonsingular}.  Thus any geodesic segment in
$\Tilde{X}$ can be expressed as a union of nonsingular pieces.

We will need to extend some of these considerations to meromorphic
quadratic differentials with finitely many second-order poles.  With
respect to the singular Euclidean structure, each second-order pole
has a neighborhood that is isometric to a half-infinite cylinder.  If
$\phi$ has local expression $a z^{-2} + O(z^{-1})$ in a local
coordinate chart, then $a$ is the \emph{residue} of the pole and and
$2 \pi |a|$ is the circumference of the associated cylinder.  As in
the case of holomorphic differentials, an Euclidean line segment in $X$
(or its universal cover) is a length-minimizing geodesic.

Additional discussion of quadratic differential metrics and
geodesics can be found in \cite{strebel:book}
\cite[Sec.~4]{minsky:harmonic-maps}.

\subsection{Periodic geodesics}

Every quadratic differential metric has many periodic geodesics: Masur
showed that for any $\phi \in Q(X)$, there is a dense set of
directions $\theta \in S^1$ for which $\F_\theta(\phi)$ has a closed
leaf \cite{masur:closed}.  More generally, we have:

\begin{thm}[{Boshernitzan, Galperin, Kruger, and Troubetzkoy \cite{bgkt}}]
\label{thm:dense}
\mbox{}
For any $\phi \in Q(X)$, tangent vectors to periodic $\phi$-geodesics
are dense in the unit tangent bundle of $X$.
\end{thm}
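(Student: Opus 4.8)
The plan is to establish the density directly from a \emph{closing lemma} for the straight-line flow on the flat surface $(X,|\phi|)$: given any pair $(p,v)$ in the unit tangent bundle, with $v$ pointing in direction $\theta_0$, and any $\epsilon>0$, I will produce a nonsingular closed $\phi$-geodesic whose tangent vector lies within $\epsilon$ of $(p,v)$. Since nonsingular closed geodesics are exactly the periodic geodesics in the sense used here (closed leaves of $\F_\theta(\phi)$ for some $\theta$), this gives the theorem. The two ingredients are: (i) for a dense set of directions $\theta$ the foliation $\F_\theta(\phi)$ is minimal, so the leaf through $p$ returns arbitrarily close to $p$; and (ii) a returning nonsingular segment can be closed up, after an arbitrarily small tilt of the direction, to a genuine cylinder core.

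For (i), I first note that the directions $\theta$ for which $\F_\theta(\phi)$ has a saddle connection form a countable set, since saddle connections have a discrete set of holonomy vectors in $\C$ and each determines a single direction. If $\F_\theta(\phi)$ has no saddle connection then it has no cylinder (the boundary of a maximal cylinder is a concatenation of saddle connections), and the first-return map of the flow to a transversal is a (generalized) interval exchange with no connections, hence minimal by Keane's criterion; thus every nonsingular leaf is dense. So I may choose $\theta$ with $|\theta-\theta_0|<\epsilon/2$ for which $\F_\theta(\phi)$ is minimal and, avoiding the countably many directions in which $p$ lies on a $\theta$-separatrix, for which the leaf $\gamma$ through $p$ is nonsingular and dense. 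By minimality, for each small $\delta>0$ there is a length $L=L(\delta)$ with $d(\gamma(L),p)<\delta$; and because $\gamma$ is a non-closed leaf, $L(\delta)\to\infty$ as $\delta\to 0$ (a bounded sequence of return lengths to shrinking balls would force an exact return, making $\gamma$ periodic), so that $\delta/L(\delta)\to 0$.

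For (ii), I fix a short transversal $\tau$ through $p$ perpendicular to $\theta$ and develop the first-return segment from $p$ to $q=\gamma(L)$ into the plane. The first-return map to $\tau$ is, on the continuity interval through $p$, a translation $t\mapsto t+c$ with $|c|\leq d(p,q)<\delta$, and the holonomy of the almost-closed loop is a vector of length $\approx L$ making angle $\approx -c/L$ with $\theta$. Re-running the flow in the tilted direction $\theta'=\theta-\arctan(c/L)$ through the immersed flat strip swept out by the returning segment then closes the loop exactly: the two ends of the strip are glued by a translation with perpendicular offset $c$, and for $|c|$ smaller than the strip width this produces a flat cylinder whose core is a nonsingular closed geodesic in direction $\theta'$ passing within $O(\delta)$ of $p$. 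Since $|\theta'-\theta|\leq \delta/L(\delta)<\epsilon/2$ and the core meets $\tau$ within $O(\delta)$ of $p$, taking $\delta$ small enough makes its tangent vector lie within $\epsilon$ of $(p,v)$, as required.

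The main obstacle is the nonsingularity in step (ii): a long returning segment may pass very close to a zero of $\phi$, so the immersed strip around it can be thin, and one must guarantee that the perpendicular offset $c$ is smaller than the available strip width before the tilt closes the loop to a \emph{nonsingular} cylinder rather than a singular geodesic through zeros. I would handle this by not fixing $\delta$ in advance but choosing the return combinatorics first: restrict to the subinterval $\tau'\subseteq\tau$ of points sharing the combinatorial first return of $p$, whose flowed-out rectangle is nonsingular of width $|\tau'|$, and only then demand $\delta<|\tau'|$ together with $\delta/L<\epsilon/2$. Verifying that such a return can always be arranged---equivalently, that tilting the interval exchange in direction $\theta$ produces a periodic component meeting any prescribed neighborhood of $p$---is the technical heart of the argument and the step I expect to require the most care.
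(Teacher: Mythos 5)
First, a point of reference: the paper does not prove Theorem \ref{thm:dense} at all --- it is quoted from \cite{bgkt} and used as a black box --- so the only meaningful comparison is with the published argument of Boshernitzan--Galperin--Kr\"uger--Troubetzkoy (see also \cite[Sec.~4]{masur-tabachnikov}). Your overall strategy (choose a minimal direction $\theta$ near $\theta_0$ via countability of saddle-connection directions, use recurrence of the leaf through $p$, then tilt by $\approx c/L$ to close a nearly-returning nonsingular strip into a cylinder) is indeed the skeleton of that argument, and step (i) is essentially correct, modulo the routine remark that for a half-translation structure a geodesic loop from $p$ to $p$ may return with reversed direction, so one either works on the orientation double cover or closes the loop after traversing it twice.

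The gap is exactly where you place it, and the repair you sketch does not close it. Once $\theta$ and the transversal $\tau$ are fixed, the first return of $p$ determines \emph{simultaneously} the return time $L$, the offset $c$ with $|c|\leq\delta$, and the continuity interval $\tau'\ni p$; you cannot ``only then demand $\delta<|\tau'|$,'' because that inequality either holds or it does not, and there is a quantitative reason to expect it to fail: the rectangle swept out by $\tau'$ up to first return is embedded, so $|\tau'|\cdot L\leq\|\phi\|$ and hence $|\tau'|\leq\|\phi\|/L$, whereas nothing in the construction ties the offset $\delta$ to $1/L$. Passing to a later return, or shrinking $\tau$ to force a closer return, only increases $L$ and shrinks the admissible strip width further, so the process has no reason to terminate; this is precisely the circularity that makes the theorem nontrivial. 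The published proof does not attempt to close up the first return of $p$ itself: it supplies a separate argument for the case in which every nonsingular strip around the long returning segment is thinner than the offset (equivalently, the segment closely shadows singular leaves), exploiting the structure of the full first-return interval exchange to the transversal to locate a returning strip that genuinely overlaps its image and still meets the prescribed neighborhood of $p$. As written, your argument proves density only under the unverified hypothesis that some return satisfies $\delta<|\tau'|$; to complete it you must supply that missing case, or do as the paper does and cite \cite{bgkt}.
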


Because a periodic geodesic for a quadratic differential metric always
sits in a parallel family foliating an annulus, any homotopy class
that can be represented by a periodic $\phi$-geodesic is also periodic
for all $\psi \in Q(X)$ sufficiently close to $\phi$.  Combining this
with the density of periodic directions, we have:

\begin{thm}
\label{thm:finite-periodic}
For any $\epsilon > 0$ there is a constant $w_0$ and finite set $P \subset \Pi$ such that for
any $\phi \in Q(X)$ with $\phi \neq 0$ there exists $\gamma \in P$
that is freely homotopic to a periodic $\phi$-geodesic that is nearly
vertical, i.e.~it is a leaf of $\F_\theta(\phi)$ for some $\theta \in
(\pi/2 - \epsilon, \pi/2 + \epsilon)$, and such that the flat annulus
foliated by parallels of the geodesic has width at least $w_0
\|\phi\|^{1/2}$.  The set $P$ can be taken to depend only on $X$ and
$\epsilon$.
\end{thm}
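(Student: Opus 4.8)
The plan is to reduce the statement to a compactness argument on the unit sphere $\{\phi : \|\phi\| = 1\} \subset Q(X)$, which is compact because $Q(X)$ is finite-dimensional. Both the hypotheses and the conclusion behave well under positive real scaling $\phi \mapsto t^2\phi$ ($t>0$): the singular flat metric scales by $t$, so all annulus widths and $\|\phi\|^{1/2}$ scale by $t$, while the foliations $\F_\theta(\phi)$—and in particular the set of periodic directions—are unchanged. Hence both the direction $\theta$ and the ratio (annulus width)$/\|\phi\|^{1/2}$ are invariant under such scaling, and it suffices to produce a finite set $P \subset \Pi$ and a constant $w_0>0$ such that every $\phi$ with $\|\phi\| = 1$ admits some $\gamma \in P$ that is periodic, nearly vertical, and bounds a flat annulus of width at least $w_0$.

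Next I would carry out a local construction near each point of the unit sphere. Fix $\phi_0$ with $\|\phi_0\|=1$. By Masur's density of periodic directions (or Theorem \ref{thm:dense}) there is a closed leaf of $\F_{\theta_0}(\phi_0)$ with $\theta_0 \in (\pi/2 - \epsilon, \pi/2 + \epsilon)$; let $\gamma_0 \in \Pi$ be its free homotopy class, and note its maximal flat annulus has some positive width $2\delta_0>0$. I would then invoke the persistence statement recorded just before the theorem: $\gamma_0$ remains periodic for all $\phi$ in a neighborhood of $\phi_0$. On this neighborhood the period $p(\phi) = \int_{\gamma_0}\sqrt{\phi}$ depends continuously (indeed holomorphically) on $\phi$ and is nonzero, so its argument—the direction $\theta(\phi)$ of the corresponding closed geodesics—varies continuously; likewise the transverse width of the maximal annulus varies continuously along the periodic locus. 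Shrinking to a neighborhood $V_{\phi_0}$, I can therefore guarantee that for every $\phi \in V_{\phi_0}$ the class $\gamma_0$ is periodic, has direction in $(\pi/2 - \epsilon, \pi/2 + \epsilon)$, and bounds an annulus of width at least $\delta_0$.

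Finally, the sets $V_{\phi_0}$ form an open cover of the compact unit sphere, so I extract a finite subcover $V_{\phi_1}, \dots, V_{\phi_N}$. Taking $P = \{\gamma_0^{(1)}, \dots, \gamma_0^{(N)}\}$ to be the associated homotopy classes and $w_0 = \min_j \delta_j$ settles the normalized case, and the scaling reduction of the first paragraph promotes it to all $\phi \neq 0$, with $P$ and $w_0$ depending only on $X$ and $\epsilon$ as required.

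The step I expect to be the main obstacle is the continuity of the annulus width along the periodic locus of a fixed class $\gamma_0$: as $\phi$ varies, the boundary of the maximal cylinder is made up of saddle connections whose combinatorial pattern can change, so some care is needed to see that the transverse width does not jump downward. I would handle this by noting that the bounding saddle connections, and hence the total transverse extent of the cylinder, vary continuously with the underlying flat structure (cf.~the treatment of cylinder decompositions in \cite{strebel:book}); in fact only lower semicontinuity of the width at $\phi_0$ is needed for the argument, which already suffices to keep the width above $\delta_0$ on a neighborhood.
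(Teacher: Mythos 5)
Your proposal is correct and follows essentially the same route as the paper's proof: reduction to the unit sphere by scaling invariance, Theorem \ref{thm:dense} to produce a nearly-vertical periodic geodesic at each point, persistence of the periodic class with a lower width bound on a neighborhood, and a finite subcover by compactness. Your extra attention to continuity of the period $\int_{\gamma_0}\sqrt{\phi}$ and lower semicontinuity of the annulus width fills in a step the paper only asserts, and is handled correctly.
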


\begin{proof}
The statement is invariant under scaling so we can restrict attention
to the unit sphere in $Q(X)$.  By Theorem \ref{thm:dense} for each
such $\phi$ there exists a nearly-vertical periodic geodesic.  This
periodic geodesic persists (and remains nearly-vertical) in an open
neighborhood $U_\phi$ of $\phi$.  Shrinking $U_\phi$ if necessary we
can also assume that the width of the flat annulus is bounded below
throughout $U_\phi$. The unit sphere in $Q(X)$ is compact
so it has a finite cover by these sets.  Choosing a representative in
$\Pi$ for the periodic curve in each element of the cover gives the
desired set $P$, and taking the minimum of the width of the annuli
over these sets gives $w_0$.
\end{proof}

Further discussion of periodic trajectories for quadratic differential
metrics can be found in \cite[Sec.~4]{masur-tabachnikov}.

\subsection{Comparing geodesic segments}

If two quadratic differentials are close, then away
from the zeros, a geodesic segment for one of them is nearly geodesic
for the other.  We make this idea precise in the following lemmas,
which are used in Section \ref{sec:sequences}.

Note that throughout this section, the holonomy of a path refers to
the Euclidean development of a quadratic differential as defined in
Section \ref{sec:qd-development} above.

\begin{lem}
\label{lem:delta}
Let $U
\subset \C$ be an open set and $\psi = \psi(z) dz^2$ a holomorphic
quadratic
differential on $U$ satisfying
\begin{equation*}
|\psi(z) - 1| < \delta < \frac{1}{2}.
\end{equation*}
If $J$ is a line segment in $U$ with holonomy $z_J$ with respect to
$dz^2$, then the holonomy $w_J$ of $J$ with respect to $\psi$
satisfies $| z_J - w_J | < \delta |z_J|,$ and in particular $w_J \neq
0$ if $z_J \neq 0$.
\end{lem}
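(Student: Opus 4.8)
The plan is to express both holonomies as integrals along $J$ and compare their integrands pointwise. Recall that the holonomy of a segment with respect to a quadratic differential is obtained by integrating the locally defined $1$-form given by its square root. Parametrize $J$ linearly as $\gamma(t) = \gamma(0) + t\,z_J$, $t \in [0,1]$, so that its holonomy with respect to $dz^2$ is $\int_0^1 z_J\,dt = z_J$, while its holonomy with respect to $\psi$ is $w_J = \int_0^1 \sqrt{\psi(\gamma(t))}\,z_J\,dt$. I fix the branch of $\sqrt{\psi}$ close to $1$; this is legitimate because the hypothesis forces $\psi$ to take values in the disk of radius $\delta < \tfrac12$ about $1$, on which the principal square root is holomorphic. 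Subtracting gives $z_J - w_J = z_J\int_0^1 \left(1 - \sqrt{\psi(\gamma(t))}\right)dt$, so the problem reduces to bounding $|1 - \sqrt{\psi}|$ along $J$.

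For the pointwise bound I would use the factorization $1 - \sqrt{\psi} = (1-\psi)/(1+\sqrt{\psi})$ and estimate the denominator from below. Since $|\psi - 1| < \tfrac12$, the value $\psi$ has real part greater than $\tfrac12$ and in particular avoids the negative real axis, so the principal root satisfies $\Re\sqrt{\psi} > 0$; consequently $|1 + \sqrt{\psi}| \geq \Re(1 + \sqrt{\psi}) > 1$. Combining this with $|1 - \psi| = |\psi - 1| < \delta$ yields the strict pointwise inequality $|1 - \sqrt{\psi}| < \delta$ everywhere on $J$.

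Integrating this bound over $[0,1]$ and using $z_J = \int_0^1 z_J\,dt$ immediately gives $|z_J - w_J| \leq |z_J|\int_0^1 |1 - \sqrt{\psi(\gamma(t))}|\,dt < \delta|z_J|$, which is the claimed estimate. The non-vanishing statement then follows from the reverse triangle inequality: if $z_J \neq 0$, then $|w_J| \geq |z_J| - |z_J - w_J| > (1-\delta)|z_J| > 0$, since $\delta < \tfrac12 < 1$. The only delicate point is the consistent choice of branch for $\sqrt{\psi}$ together with the lower bound $|1 + \sqrt{\psi}| > 1$; both rest on the hypothesis $\delta < \tfrac12$, which keeps $\psi$ safely away from $0$ and the negative reals, so no genuine analytic difficulty arises.
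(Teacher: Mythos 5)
Your proof is correct and follows essentially the same route as the paper: choose the branch of $\psi^{1/2}$ near $1$, show $|\psi^{1/2}-1|<\delta$, and integrate along $J$. The only difference is that you spell out why the square root is contracting on the disk $|w-1|<\tfrac12$ (via $1-\sqrt{\psi}=(1-\psi)/(1+\sqrt{\psi})$ and $|1+\sqrt{\psi}|>1$), a step the paper merely asserts.
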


\begin{proof}
By hypothesis the function $\psi(z)$ does not have zeros in $U$, so there is a
unique branch of $\psi(z)^{1/2}$ with positive real part, which
satisfies
$$ |\psi(z)^{1/2} - 1| < \delta.$$
Here we have used that $\delta < \frac{1}{2}$ to ensure that $\psi(z) \mapsto
\psi(z)^{1/2}$ is contracting.  Since holonomy is obtained by
integrating $\psi(z)^{1/2}$, the inequality above gives
$$ |z_J - w_J| \leq \int_J |\psi(z)^{1/2} - 1| |dz| < \delta |z_J|.$$
\end{proof}

\begin{lem}
\label{lem:phipsi}
Let $\phi \in Q(X)$ be a holomorphic quadratic differential and $U
\subset Q(X)$ an open, contractible, $\phi$-convex set that does not
contain any zeros of $\phi$.  If $\psi$ is a holomorphic quadratic
differential on $U$ satisfying
$$ \frac{|\psi - \phi|}{|\phi|} < \delta < \frac{1}{4},$$
then:
\begin{rmenumerate}
\item Any natural coordinate for $\psi$ is injective on $U$.
\item For any $p,q \in U$ we have
$$ d_\phi(p,q) > 4/5 d_\psi(p,q).$$
\end{rmenumerate}
Furthermore, if $J$ is a $\phi$-geodesic segment in $U$ of length $L$
that is
  not too close to $\partial U$, i.e.
$$ d_\phi(J,\partial U) > 4 \delta L,$$
then we also have:
\begin{rmenumerate}
\setcounter{enumi}{2}
\item The endpoints of 
  $J$ are joined by a nonsingular $\psi$-geodesic segment $J' \subset
  U$,
\item The segment $J'$ satisfies $d_{\psi}(J',\partial U) >
  \frac{1}{4} d_{\phi}(J,\partial U)$ and $d_{\phi}(J',\partial U) >
  \frac{1}{4} d_{\phi}(J,\partial U)$.
\item The width $w'$, height $h'$, and length $L'$ of $J'$ with
  respect to $\psi$ satisfy
$$ \max(|L'-L|, |w'-w|,|h'-h|) < \delta L,$$
where $L$,$w$, and $h$ are the corresponding quantities for $J$ with
respect to $\phi$.
\end{rmenumerate}
\end{lem}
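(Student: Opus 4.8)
The plan is to reduce all five conclusions to Lemma~\ref{lem:delta} by working in a natural coordinate $z$ for $\phi$ on $U$, which exists and is injective precisely because $U$ is contractible, $\phi$-convex, and free of zeros of $\phi$. In this coordinate $\phi = dz^2$ and $\psi = \psi(z)\,dz^2$, so the hypothesis $|\psi - \phi|/|\phi| < \delta$ reads exactly $|\psi(z) - 1| < \delta$, and $|\psi| \geq (1-\delta)|\phi| > 0$ shows $\psi$ has no zeros on $U$. Given $p, q \in U$, the $\phi$-geodesic between them is the straight $z$-segment $J_{pq}$, which lies in $U$ by $\phi$-convexity and is nonsingular; its $\phi$-holonomy is $z_{J} = z(q) - z(p)$ and its $\psi$-holonomy is $w_{J} = \int_{J_{pq}} \psi^{1/2}\,dz$, for which Lemma~\ref{lem:delta} gives $|z_J - w_J| < \delta |z_J|$, while the pointwise bound $|\psi^{1/2}| < 1 + \delta$ controls $\psi$-lengths. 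Conclusion (i) is then immediate: for $p \neq q$ the globally defined natural coordinate $\zeta = \int \psi^{1/2}\,dz$ (well defined since $U$ is simply connected and $\psi$ is nonvanishing) satisfies $\zeta(q) - \zeta(p) = w_J \neq 0$, so $\zeta$ is injective. For (ii), the $\psi$-length of $J_{pq}$ is at most $(1+\delta)$ times its $\phi$-length $= (1+\delta)\,d_\phi(p,q)$, so $d_\psi(p,q) < (1+\delta)\,d_\phi(p,q)$ and hence $d_\phi(p,q) > \frac{1}{1+\delta}\,d_\psi(p,q) > \frac45 d_\psi(p,q)$ because $\delta < \frac14$.

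The heart of the argument is (iii): showing the straight segment $S$ in the $\zeta$-plane joining $\zeta(p)$ and $\zeta(q)$ actually lies in the image $\zeta(U)$, so that $J' := \zeta^{-1}(S)$ is a well-defined nonsingular $\psi$-geodesic contained in $U$. I would establish this by a contraction-mapping argument producing, for each point of $S$, a $\zeta$-preimage close to the corresponding point of $J$. Write $g = \zeta \circ z^{-1}$, so $g' = \psi^{1/2}$ and the displacement $e = g - \mathrm{id}$ satisfies $|e'| < \delta$; writing $Z(t)$ for the point dividing $J$ in ratio $t$ and $S(t) = (1-t)\zeta(p) + t\,\zeta(q)$, a one-line estimate gives $|g(Z(t)) - S(t)| = |(1-t)(e(Z(t)) - e(z(p))) + t(e(Z(t)) - e(z(q)))| \leq 2t(1-t)\delta L \leq \tfrac12 \delta L$. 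Consequently $z \mapsto S(t) - e(z)$ is a $\delta$-contraction carrying the closed ball $\bar{B}(Z(t), r)$ into itself for $r = \tfrac{\delta L}{2(1-\delta)}$, and its unique fixed point $z^*(t)$ solves $g(z^*(t)) = S(t)$. The hypothesis $d_\phi(J, \partial U) > 4\delta L$ is exactly what guarantees $\bar{B}(Z(t), r) \subset z(U)$, since $r < \tfrac23 \delta L < d_\phi(J, \partial U)$; this makes the fixed-point argument valid and simultaneously shows $J'$ lies within $\phi$-distance $r$ of $J$.

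This last estimate yields (iv): since every point of $J'$ is within $\phi$-distance $r < \tfrac16 d_\phi(J,\partial U)$ of $J$, the triangle inequality gives $d_\phi(J', \partial U) > d_\phi(J, \partial U) - r > \tfrac14 d_\phi(J, \partial U)$, and the pointwise comparison of length elements gives $d_\psi(J', \partial U) \geq (1-\delta)\,d_\phi(J', \partial U) > \tfrac14 d_\phi(J, \partial U)$. Finally (v) follows from holonomy bookkeeping: $J'$ is straight in the $\zeta$-coordinate with $\psi$-holonomy equal to $\zeta(q) - \zeta(p) = w_J$ (path-independence makes this agree for $J$ and $J'$), so its $\psi$-length, width, and height are $|w_J|$, $|\Re w_J|$, $|\Im w_J|$, while the corresponding quantities for $J$ are $|z_J|$, $|\Re z_J|$, $|\Im z_J|$; the bound $|z_J - w_J| < \delta |z_J| = \delta L$ therefore controls all three differences at once. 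The single genuine obstacle is the ``staying inside $U$'' step for the $\psi$-geodesic in (iii); once the problem is set in the natural coordinate for $\phi$ and $r$ is controlled by the standoff hypothesis $d_\phi(J, \partial U) > 4\delta L$, the remaining parts are routine comparisons built on Lemma~\ref{lem:delta}.
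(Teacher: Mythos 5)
Your proof is correct and follows essentially the same route as the paper's: pass to a natural coordinate $z$ for $\phi$ on $U$, reduce everything to the holonomy estimate of Lemma~\ref{lem:delta}, build the $\psi$-natural coordinate $\zeta$, and reduce (iii)--(v) to showing that the straight segment joining $\zeta(p)$ to $\zeta(q)$ lies in $\zeta(U)$. The one place you diverge is the mechanism for that containment: the paper parameterizes the straight $\zeta$-segment by $t\,\zeta(j_1)$ and bounds its distance from $\zeta(J)$ by $2\delta L$, then invokes the lower bound $d_\psi(J,\partial U)>4\delta(1-\delta)L$, whereas you run a contraction-mapping argument in the $z$-plane to manufacture a $\zeta$-preimage of each point of the segment within $\phi$-distance $r=\delta L/(2(1-\delta))$ of the corresponding point of $J$; both are valid, and your version yields a slightly sharper control on how far $J'$ strays from $J$ (roughly $\tfrac{2}{3}\delta L$ versus $2\delta L$), which makes your derivation of (iv) marginally cleaner. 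The remaining minor variation, bounding $d_\psi(p,q)$ in (ii) by the $\psi$-length of the straight $z$-segment rather than by $|\zeta(p)-\zeta(q)|$, is harmless.
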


\begin{proof}
Identify $U$ with its image by a natural coordinate $z$ for $\phi$. 
Then
$\psi = \psi(z) dz^2$ satisfies $|\psi(z) - 1| < \delta$.  
Now we
repeatedly apply the holonomy estimate from Lemma \ref{lem:delta}.

(i)\: By Lemma \ref{lem:delta}, any line segment in $U$ has nonzero
$\psi$-holonomy and $U$ is convex, so $U$ develops injectively by a
natural coordinate $\zeta$ for $\psi$.

(ii)\: Again using Lemma \ref{lem:delta} we have 
\begin{equation}
\label{eqn:close-holonomy}
|( z(p) - z(q) ) - ( \zeta(p) - \zeta(q) ) | < \delta |z(p) - z(q) |
\end{equation}
from which it follows that $|z(p) - z(q)| > (1+\delta)^{-1}|\zeta(p) -
\zeta(q)|$.  Convexity implies that $d_\phi(p,q) = |z(p) - z(q)|$
while the injectivity of $\zeta$ on $U$ gives $d_\psi(p,q) \leq
|\zeta(p)-\zeta(q)|$.  Noting that $(1+\delta)^{-1} > 4/5$ gives the
desired estimate.

(iii)\: Equation \eqref{eqn:close-holonomy} also gives the bound
$$|\zeta(p) - \zeta(q)| > (1 - \delta) |z(p) - z(q)|$$
however we can only equate the left hand side with the distance
$d_\psi(p,q)$ in cases where $p$ and $q$ are joined by a $\psi$-segment
in
$U$.  However, since $U$ injects into the $\zeta$-plane, the minimum
distance from $J$ to $\partial U$ is realized by such a segment, and
we have
$$ d_\psi(J,\partial U) > (1-\delta) d_\phi(J,\partial U) >
4\delta(1-\delta)L.$$

Let $\{j_0,j_1\}$ denote the endpoints of $J$ and translate the
coordinates $z$ and $\zeta$ so that $z(j_0) = \zeta(j_0) = 0$.
Parameterize $J$ by $\alpha(t)$ so that $z(\alpha(t)) = t z(j_1)$.
Then any point on $\zeta(J)$ has the form $\zeta(\alpha(t))$, while a
point on the segment $I$ in $\C$ joining $\zeta(j_0)$ to $\zeta(j_1)$
has the
form $t \zeta(j_1)$ for $t \in [0,1]$.  We
estimate
$$ | t \zeta(j_1) - \zeta(\alpha(t)) | 
\leq |t \zeta(j_1) - t z(j_1) | + |z(\alpha(t)) - \zeta(\alpha(t))|.$$
Each term on the right is the difference in $\phi$- and
$\psi$-holonomy vectors of a path of $\phi$-length at most $L$ (with a
coefficient of $t$ in the first term).  By Lemma \ref{lem:delta} each
term is at most $\delta L$, so the segment $I$ lies in a $2 \delta
L$-neighborhood of $\zeta(J)$.  

Since $2 \delta < 4\delta(1-\delta)$, we have $I \subset \zeta(U)$
and $J' = \zeta^{-1}(I)$ defines a nonsingular $\psi$-geodesic segment.

(iv)\: From (iii) we have $d_\psi(J',\partial U) >
(1-\delta)d_\phi(J,\partial U)
- 2 \delta L$, and by hypothesis $2 \delta L <
(1/2)d_\phi(J,\partial U)$.  Combining these and using
$(1/2 - \delta) > 1/4$ gives the desired estimate.

(v)\:The $\phi$-holonomy of $J$ is $w + i h$, while the
$\psi$-holonomy is $w' + i h'$.  The comparison of these quantities
therefore follows immediately from the holonomy estimate.
\end{proof}

\subsection{The Schwarzian derivative of a conformal metric}

Given two conformal metrics $\sigma_i= e^{\eta_i} |dz|$, $i=1,2$, the
\emph{Schwarzian derivative} of $\sigma_2$ relative to $\sigma_1$ is
the quadratic differential
\begin{equation}
\label{eqn:schwarzian-def}
\B(\sigma_1,\sigma_2) = \left [ (\eta_2)_{zz} - (\eta_2)_z^2 -
  (\eta_1)_{zz} + (\eta_1)_z^2 \right ] \: dz^2.
\end{equation}
Note that this differential is not necessarily holomorphic.  This
generalization of the classical Schwarzian derivative was introduced
by Osgood and Stowe \cite{osgood-stowe:schwarzian} (though in their
construction the result is a symmetric real tensor which has the
expression above as the $(2,0)$ part).  The classical Schwarzian
derivative can be recovered from the metric version as follows: If $f
: \Omega \to \C$ is a locally injective holomorphic function on a
domain $\Omega$, then
\begin{equation*}
\label{eqn:schwarzian-pullback}
S(f) = 2 \B(|dz|, f^*(|dz|)).
\end{equation*}
We will use the following properties of the generalized Schwarzian
derivative, each of which follows easily from the formula above.
\begin{enumerate}
\item[(B1)] \textbf{Cocycle:} For any triple of conformal
metrics
  $(\sigma_1,\sigma_2,\sigma_3)$ on a given domain, we have
\begin{equation*}
\label{eqn:schwarzian-cocycle}
\B(\sigma_1,\sigma_3) = \B(\sigma_1,\sigma_2) +
\B(\sigma_2,\sigma_3).
\end{equation*}
\item[(B2)] \textbf{Naturality:} If $f : \Omega \to \Omega'$ is a
conformal map
  of domains in $\C$ (or $\CP^1$), and $(\sigma_1,\sigma_2)$ are
  metrics on $\Omega'$, then we have
\begin{equation*}
\B(f^* \sigma_1, f^* \sigma_2) = f^* \B(\sigma_1,\sigma_2)
\end{equation*}
\item[(B3)] \textbf{Flatness:} If a conformal metric $\sigma_0$ on a
domain in $\C$ satisfies
  $\B(|dz|,\sigma_0)=0$, then there exist $k > 0$ and $A \in
  \SL_2(\C)$ such that $k A^*\sigma_0$ is the restriction of one of the
  following metrics:
\begin{enumerate}
\item The hyperbolic metric $2(1-|z|^2)^{-1}|dz|$ on $\Delta$.
\item The Euclidean metric $|dz|$ on $\C$.
\item The spherical metric $2(1+|z|^2)^{-1}$ on $\CP^1$.
\end{enumerate}
\end{enumerate}
The metrics described in (B3) will be called \emph{M\"obius
  flat}.  It follows from (B1) that the Schwarzian derivative of
a metric $\sigma = e^\eta |dz|$ relative to $|dz|$ is equal to its
Schwarzian derivative relative to any M\"obius flat metric
$\sigma_{\mathrm{flat}}$, and is
given by 
\begin{equation}
\label{eqn:short-schwarzian-formula}
  \B(|dz|, e^\eta |dz|) = \B(\sigma_{\mathrm{flat}}, e^\eta |dz|) =
\left (
\eta_{zz} - (\eta_z)^2\right) dz^2.
\end{equation}
We also note that property (B2) implies that the Schwarzian is
well-defined for
pairs of conformal metrics on a Riemann surface.

\begin{lem}
\label{lem:holomorphic}
Let $\sigma$ be a conformal metric of constant curvature.  Then
the differential $\B(\sigma,\sigma')$ is holomorphic if and only
if the curvature of $\sigma'$ is also constant.
\end{lem}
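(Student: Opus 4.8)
The plan is to reduce the statement to a pointwise computation in a local holomorphic coordinate, turning ``holomorphic'' into the vanishing of an anti-holomorphic derivative and then identifying that derivative in terms of the curvature of $\sigma'$. Fix a coordinate $z$ and write $\sigma = e^{\eta}|dz|$ and $\sigma' = e^{\eta'}|dz|$, with Gaussian curvatures $K$ and $K'$. By the cocycle property (B1) together with the short formula \eqref{eqn:short-schwarzian-formula}, the coefficient $b$ in $\B(\sigma,\sigma') = b\,dz^2$ is
\begin{equation*}
b = \left(\eta'_{zz} - (\eta'_z)^2\right) - \left(\eta_{zz} - (\eta_z)^2\right),
\end{equation*}
so $\B(\sigma,\sigma')$ is holomorphic precisely when $b_{\bar z} \equiv 0$.

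The key step is a general identity: for any $C^3$ conformal metric $e^{\eta}|dz|$ of curvature $K$,
\begin{equation*}
\partial_{\bar z}\!\left(\eta_{zz} - (\eta_z)^2\right) = -\tfrac14\, e^{2\eta}\, K_z.
\end{equation*}
I would prove this by differentiating the left-hand side to obtain $\eta_{zz\bar z} - 2\eta_z\eta_{z\bar z}$ and then substituting the curvature formula $\eta_{z\bar z} = -\tfrac14 K e^{2\eta}$ from Section \ref{subsec:conformal-metrics}. Differentiating that relation in $z$ gives $\eta_{zz\bar z} = -\tfrac14 K_z e^{2\eta} - \tfrac12 K\eta_z e^{2\eta}$, and since the last summand equals $2\eta_z\eta_{z\bar z}$ it cancels the cross term, leaving exactly $-\tfrac14 e^{2\eta} K_z$.

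Applying this identity to each of $\eta'$ and $\eta$ and subtracting, I obtain
\begin{equation*}
b_{\bar z} = -\tfrac14\, e^{2\eta'} K'_z + \tfrac14\, e^{2\eta} K_z.
\end{equation*}
Here the hypothesis enters: because $\sigma$ has constant curvature, $K_z \equiv 0$ and the last term drops, so $b_{\bar z} = -\tfrac14 e^{2\eta'} K'_z$, and since $e^{2\eta'} > 0$ this shows $\B(\sigma,\sigma')$ is holomorphic if and only if $K'_z \equiv 0$. It then remains to convert $K'_z \equiv 0$ into ``$K'$ is constant,'' which is the one place the argument is not a formal manipulation: as $K'$ is real-valued one has $K'_{\bar z} = \overline{K'_z}$, so $K'_z \equiv 0$ forces $dK' \equiv 0$ and hence $K'$ constant on a connected surface, while the converse is immediate. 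I expect no genuine obstacle beyond this reality argument; the only caveat is regularity, since the identity uses a third derivative of $\eta'$ and so is applied on the locus where $\sigma'$ is a smooth metric, with continuity handling any degenerate points.
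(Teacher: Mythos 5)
Your proof is correct and is essentially the paper's own argument: the paper likewise computes $\bar{\partial}\B(\sigma,\sigma') = K_z\sigma^2 - K'_z\sigma'^2$ (your identity, up to the harmless factor of $\tfrac14$) and concludes from $K_z\equiv 0$ and nondegeneracy of $\sigma'^2$. Your added remarks on passing from $K'_z\equiv 0$ to $K'$ constant via reality, and on the regularity needed for the third derivative, only make explicit what the paper leaves implicit.
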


\begin{proof}
An elementary calculation using \eqref{eqn:schwarzian-def} gives
$$ \bar{\partial} \B(\sigma,\sigma') = K_z \sigma^2 -
K'_z \sigma'^2,$$
where $K$ (respectively $K'$) is the Gaussian curvature function of
$\sigma$ (resp.~$\sigma'$).  By hypothesis $K_z \equiv 0$, and
$\sigma'^2$ is
a nondegenerate area form, so the expression above vanishes if and
only if $K'$ is constant.
\end{proof}

\subsection{Metrics associated to a $\CP^1$ structure}
\label{sec:metrics}

As before let $X$ be a compact Riemann surface and let $(f,\rho)$ be a
projective structure on $X$ with Schwarzian $\phi \in Q(X)$.
Associated to these data are three conformal metrics:
\begin{itemize}
\item The hyperbolic metric $\hyp$ on $X$,
\item The singular Euclidean metric $|\phi|^{1/2}$, and
\item The pullback metric $f^*\sph$ on $\Tilde{X}$, where $\sph$ is a
  spherical metric on $\CP^1$.  
\end{itemize}

Taking pairs of these metrics gives three associated
Schwarzian derivatives, which by Lemma \ref{lem:holomorphic} are
holomorphic except possibly at the zeros of $\phi$.  By (B2) we
have:
\begin{equation*}
\phi = 2 \B(\hyp,f^*\sph),
\end{equation*}
and for the other pairs we introduce the notation
\begin{equation*}
\begin{split}
\hat{\phi} &= 2 \B(|\phi|^{1/2},f^*\sph),\\
\flat &= 2\B(\hyp,|\phi|^{1/2}).
\end{split}
\end{equation*}
Note that $f^*\sph$ is actually a metric on the universal cover rather
than on $X$ itself.  However, by (B2) its Schwarzian relative to any
$\Pi$-invariant metric is a $\Pi$-invariant quadratic differential, so
in the expressions above we have implicitly identified this
differential with the one it induces on $X$.

By (B1) the differentials $\phi$,$\hat{\phi}$,$\flat$ have
a linear relationship:
\begin{equation}
\label{eqn:linear}
\hat{\phi} = \phi - \flat.
\end{equation}

Near a zero of $\phi$, one can choose coordinates so that $\phi = z^k
dz^2$.  Calculating in these coordinates and using the explicit
expression for $\B(\param,\param)$, it is easy to check that
$\flat$ extends to a meromorphic differential on $X$ with poles
of order $2$ at the zeros of $\phi$.  At a zero of $\phi$ of order
$k$, the residue of $\flat$ is $-\frac{k(k+4)}{8}$.
Of course by \eqref{eqn:linear}, the differential $\hat{\phi}$ also
has poles of order $2$ at the zeros of $\phi$ and is holomorphic
elsewhere.

We will be interested in comparing the geometry of $\phi$ and
$\hat{\phi}$ when $\phi$ is ``large''.  Note that $\flat$ is
independent of scaling and $Q(X)$ is finite-dimensional, so $(\phi -
\hat{\phi}) = \flat$ ranges over a compact set of meromorphic
differentials.  Thus when $\phi$ has large norm, one expects
$|\flat/\phi|$ to be small and for $\phi$ and $\hat{\phi}$ to be
nearly the same away from the zeros of $\phi$.  Quantifying this in
terms of the geometry of $|\phi|$, we have:

\begin{lem}[Bounding $\flat$]
\label{lem:boundingbeta}
For any $\phi \in Q(X)$ we have
\begin{equation}
\label{eqn:betabound}
\left | \frac{\flat(z)}{\phi(z)} \right | \leq \frac{6}{d(z)^2},
\end{equation}
where $d(z)$ is the $\phi$-distance from $z$ to $Z_\phi$.  Furthermore,
if $\grad$ denotes the gradient with respect to the metric
$|\phi|^{1/2}$, then we also have
\begin{equation}
\label{eqn:betaderivbound}
\left | \grad \left ( \frac{\flat(z)}{\phi(z)}\right ) \right | \leq
\frac{48}{d(z)^3}.
\end{equation}
\end{lem}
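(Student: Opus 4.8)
We have $\beta = 2\mathrm{B}(\sigma_{\mathrm{hyp}}, |\phi|^{1/2})$, a meromorphic quadratic differential with double poles at the zeros of $\phi$. We need two bounds on the ratio $\beta/\phi$ in terms of the $\phi$-distance $d(z)$ to the zero set $Z_\phi$: a pointwise bound $\leq 6/d(z)^2$ and a gradient bound $\leq 48/d(z)^3$.

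**Key structural fact.** The quantity $\beta/\phi$ should be expressible intrinsically. Using the cocycle (B1) and flatness (B3), $\beta = 2\mathrm{B}(\sigma_{\mathrm{hyp}}, |\phi|^{1/2})$, and by (B1) applied with an intermediate Möbius-flat metric, $\beta$ measures the Schwarzian of $|\phi|^{1/2}$ relative to $\sigma_{\mathrm{hyp}}$. The crucial point is that $\beta/\phi$ is a *function* on $X \setminus Z_\phi$, and in the natural coordinate $z$ where $\phi = dz^2$ (so $|\phi|^{1/2} = |dz|$ is flat), formula \eqref{eqn:short-schwarzian-formula} shows that $\beta$ reduces to the Schwarzian of the *hyperbolic* metric relative to the flat metric. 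So in natural coordinates, $\beta/\phi = -2(\eta_{zz} - \eta_z^2)$ where $\eta$ is the log-density of $\sigma_{\mathrm{hyp}}$ with respect to $|dz| = |\phi|^{1/2}$. This converts the problem into bounding first and second derivatives of the hyperbolic log-density in the flat ($\phi$-)coordinate.

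**Strategy.** The plan is to exploit the fact that the hyperbolic metric is a *negatively curved* conformal metric, so its log-density $\eta$ (relative to the flat $\phi$-metric) satisfies the curvature PDE $\Delta_\phi \eta = e^{2\eta}$ (the Liouville-type equation for curvature $-1$, since the flat metric has $K=0$). I would first set up the comparison: on the $\phi$-disk $B(z, d(z))$ of radius $d(z)$ about a point $z$, which is nonsingular and develops isometrically onto a Euclidean disk, the hyperbolic metric of $X$ is dominated by the hyperbolic metric of that embedded disk (Schwarz–Pick / Ahlfors). This gives the pointwise bound $e^{2\eta(z)} \leq C/d(z)^2$ with an explicit constant, because the hyperbolic metric of a Euclidean disk of radius $r$ at its center has density $\sim 2/r$. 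To get $\beta/\phi = -2(\eta_{zz}-\eta_z^2)$, I would then need to control $\eta_{zz}$ and $\eta_z$; the standard device is interior elliptic estimates (Schauder, or direct barrier arguments) applied to the Liouville equation on a subdisk of radius $\tfrac{1}{2}d(z)$, bounding derivatives of $\eta$ by the sup of $e^{2\eta}$ on the larger disk, which is again $O(1/d^2)$. Rescaling the coordinate by $d(z)$ to normalize the disk to unit size makes the derivative scalings $\eta_z = O(1/d)$ and $\eta_{zz} = O(1/d^2)$ transparent, and these combine to give the claimed $6/d^2$.

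**The gradient bound and the main obstacle.** For \eqref{eqn:betaderivbound}, I would differentiate the expression $\beta/\phi = -2(\eta_{zz}-\eta_z^2)$ once more in the natural coordinate (noting $\nabla$ with respect to $|\phi|^{1/2}$ is just the ordinary gradient in that coordinate, up to the flat metric normalization), producing third derivatives of $\eta$. The same rescaling heuristic predicts an extra factor of $1/d$, yielding the $1/d^3$ bound. I expect the \textbf{main obstacle} to be producing the explicit numerical constants $6$ and $48$ rather than merely $O(1/d^2)$ and $O(1/d^3)$. Soft elliptic estimates give qualitative scaling easily but lose track of constants; getting sharp values likely requires either an explicit computation on the model (the hyperbolic metric on a round disk, where $\eta$ and its derivatives are computable in closed form) combined with a monotonicity/comparison argument, or a careful tracking of constants through a concrete barrier. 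I would therefore reduce to the extremal model of a single Euclidean disk, compute $\eta_{zz} - \eta_z^2$ and its gradient exactly there, and then argue that the true surface is dominated by this model via the maximum principle applied to the curvature equation, so that the model constants control the general case.
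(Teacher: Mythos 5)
Your reduction to natural coordinates is the right first move, and it matches the paper: in a coordinate where $\phi = dz^2$ the metric $|\phi|^{1/2}=|dz|$ is M\"obius flat, so $\flat/\phi$ becomes $-2(\eta_{zz}-\eta_z^2)$ for the hyperbolic log-density $\eta$, and the problem is to bound this quantity and its derivative on the embedded $\phi$-disk of radius $d(z)$. But from there your route has a genuine gap, precisely at the step you flag as the main obstacle. The quantity $\eta_{zz}-\eta_z^2$ equals $\tfrac12 S(h)$ where $h:B(z,d)\to\H$ is the (univalent) developing map of the hyperbolic structure, and the proposed mechanism for recovering the constant $6$ --- compute on the round-disk model and dominate the general case by the maximum principle --- cannot work: the hyperbolic metric of a round disk is itself M\"obius flat, so the ``extremal model'' has $\eta_{zz}-\eta_z^2\equiv 0$. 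A quantity that vanishes on the model cannot dominate one that is generically nonzero, and indeed the Ahlfors--Schwarz comparison only gives pointwise domination of the densities $e^{2\eta}$, which controls no second-derivative combination of $\eta$ at an interior point. (The true extremal for this bound is a Koebe-type map, not the identity.) The interior Schauder estimates for the Liouville equation that you invoke for the soft bound also need more care than stated --- they require control of the oscillation of $\eta$, not just an upper bound on $e^{2\eta}$ --- though that part could be repaired.

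The missing idea is that one should not treat $\eta_{zz}-\eta_z^2$ as a PDE quantity at all: since $B(z,d(z))$ develops univalently into $\H$ by $h$, we have $\flat = S(h)$ on that disk, and the Nehari--Kraus theorem gives $|S(h)(z)|\leq 6/d(z)^2$ verbatim --- the constant $6$ is exactly the Kraus constant, not something to be extracted from elliptic theory. The gradient bound then comes for free from holomorphy (which your approach also does not exploit: $\eta_{zz}-\eta_z^2$ is holomorphic by Lemma \ref{lem:holomorphic}, since both metrics have constant curvature): applying the Cauchy integral formula on the circle of radius $d/2$ about $z$, where the pointwise bound gives $|\flat|\leq 6/(d/2)^2 = 24/d^2$, yields $|\flat'(z)|\leq (24/d^2)/(d/2)=48/d^3$. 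Differentiating the curvature equation and running elliptic estimates for third derivatives of $\eta$, as you propose, would at best give $O(1/d^3)$ with an uncontrolled constant.
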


\begin{proof}
  We work in a natural coordinate $z$ for $\phi$ and use this
  coordinate to identify differentials with holomorphic functions,
  i.e.~$\flat(z) / \phi(z)$ becomes $\flat(z)$.

  By applying a translation it suffices to consider the point $z=0$.
  By definition of the function $d(z)$, we can also assume that the
  $z$-coordinate neighborhood contains an open Euclidean disk $D$ of
  radius $d = d(0)$ centered at $0$.  If $d$ is greater than the
  $\phi$-injectivity radius of $X$, we work in the universal cover but
  suppress this distinction in our notation.

  Let $h : D \to \H$ be a developing map for the hyperbolic metric of
  $X$ restricted to $D$, so $\flat = S(h)$.  Since
  $h$ is a univalent map on $D$, the Nehari-Kraus theorem gives
$|S(h)(0)| \leq
  6/d^2$, which is \eqref{eqn:betabound}.

  Since $\flat(z)$ is holomorphic and we are working in the natural
coordinate for $\phi$, the gradient is given by $|\nabla \flat(z)| =
|\flat'(z)|$.  The estimate \eqref{eqn:betaderivbound} then follows
immediately from the Cauchy integral formula applied to a circle of
  radius $d(z)/2$ centered at $z$. 
\end{proof}

\section{Epstein maps}
\label{sec:epstein}

In this section we review a construction of C.~Epstein (from the
unpublished paper \cite{epstein:envelopes}) which produces surfaces in
hyperbolic space from domains in $\CP^1$ equipped with conformal
metrics.  We analyze the local geometry of these surfaces, first for
general conformal metrics and then for the special case of a quadratic
differential metric.  While at several points we mention results and
constructions from \cite{epstein:envelopes}, our treatment is
self-contained in that we provide proofs of the properties of these
surfaces that are used in the sequel.

\subsection{The construction.}
For each $p \in \H^3$, following geodesic rays from $p$ out to the
sphere at infinity $\partial_\infty \H^3 \simeq \CP^1$ defines a
diffeomorphism $U_p \H^3 \to \CP^1$, where $U \H^3$ denotes the unit
tangent bundle of $\H^3$.  Let $V_p$ denote pushforward of the metric
on $U_p \H^3$ by this map, which we call the \emph{visual metric}
from $p$.  For example, in the unit ball model of $\H^3$, the
visual metric from the origin is the usual spherical metric of $S^2
\simeq \CP^1$.

\begin{thm}[{Epstein \cite{epstein:envelopes}}]
\label{thm:epstein-existence}
Let $X$ be a Riemann surface equipped with a $C^1$ conformal metric
$\sigma$, and let $f : X \to \CP^1$ be a locally injective
holomorphic map.  Then there is a unique continuous map $\epst(f,\sigma)
:
X \to \H^3$ such that for all $z \in X$, we have
$$(f^*V_{\epst(f,\sigma)(z)})(z) = \sigma(z).$$
Furthermore, the point $\epst(f,\sigma)(z)$ depends only on the $1$-jet
of
$\sigma$ at $z$, and if $\sigma$ is $C^k$, then $\epst(f,\sigma)$ is
$C^{k-1}$.
\end{thm}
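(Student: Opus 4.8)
The plan is to reduce the global statement to a pointwise problem in $\H^3$ and then reassemble. Since $f$ is locally injective and holomorphic it is a local biholomorphism, so near each point $z$ the pullback $f^*V_p$ is a bona fide conformal metric and the equation to be solved is local. Fix $z$ and set $w = f(z)$. A standard computation shows that the density of $V_p$ at $w$, viewed as a function of $p$, has the horospheres based at $w$ as its level sets, so the displayed condition $(f^*V_{\epst(f,\sigma)(z)})(z)=\sigma(z)$ only constrains the point to a horosphere; the point itself is pinned down by demanding that $f^*V_p$ agree with $\sigma$ at $z$ to \emph{first order}, equivalently that this horosphere be tangent to the image surface (Epstein's envelope). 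This first-order requirement is the operative characterization, and it is exactly what the clause ``depends only on the $1$-jet of $\sigma$'' records.

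The heart of the argument is a model computation: for a fixed $w_0 \in \CP^1$ I would show that the map
$$ \H^3 \longrightarrow J^1_{w_0}, \qquad p \longmapsto j^1_{w_0}(V_p), $$
sending a point to the $1$-jet at $w_0$ of its visual metric, is a real-analytic diffeomorphism onto the space $J^1_{w_0}$ of $1$-jets of conformal metrics at $w_0$. The dimensions agree: a $1$-jet is a density together with its gradient, i.e.\ $1+2$ real parameters, while $\dim_\R \H^3 = 3$. I would carry this out in the upper half-space model using the explicit formula
$$ V_{(\zeta_0,t_0)}(w) = \frac{2\,t_0}{|w-\zeta_0|^2 + t_0^2}\,|dw|, $$
normalized so that $V_{(0,1)} = \sph$; here the value at $w_0$ cuts out a horosphere and the gradient at $w_0$ coordinatizes the point on it. Proving that this correspondence is bijective with smooth inverse --- in particular that, along each horosphere, the gradient map is onto $\C$ --- is the main obstacle. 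I expect it to be tractable because $V_p$ is a spherical (constant-curvature) metric, so curvature constrains only its $2$-jet; its $1$-jet is free, which makes surjectivity onto all (value, gradient) data consistent with the dimension count.

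To pass from the model point $w_0$ to an arbitrary $w = f(z)$ --- including $w = \infty$ --- I would invoke the equivariance of visual metrics under isometries, $V_{Ap} = (\partial A)_* V_p$ for $A \in \PSL_2(\C)$, together with transitivity of the action on $\CP^1$. This transports the pointwise existence and uniqueness to every $z$; pulling back by the local biholomorphism $f$ and matching $j^1_z(f^*V_p)$ against $j^1_z\sigma$ then determines $\epst(f,\sigma)(z)$ uniquely, with the displayed value condition recovered as the zeroth-order part of the matching. Finally, regularity is formal: the inverse of the model diffeomorphism is real-analytic and $f$ is holomorphic, so $\epst(f,\sigma)$ is the composite of this inverse with the section $z \mapsto j^1_z\sigma$; if $\sigma$ is $C^k$ then $j^1\sigma$ is only $C^{k-1}$, whence $\epst(f,\sigma)$ is $C^{k-1}$. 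In particular a $C^1$ metric yields a continuous map, which is why the theorem asserts just continuity in that case.
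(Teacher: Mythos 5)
Your proposal is correct in outline, but you should know that the paper does not prove this theorem at all: it is quoted from Epstein's unpublished paper, and the text immediately afterwards adopts the explicit formula \eqref{eqn:epstein-formula} as the \emph{definition} of the Epstein map, remarking only that the visual-metric property ``can be checked directly'' and is never used in the sequel. So you are supplying an argument the paper deliberately outsources, and your route --- show that $p \mapsto j^1_{w_0}(V_p)$ is a real-analytic diffeomorphism from $\H^3$ onto the space of $1$-jets of conformal metrics at $w_0$, transport by $\PSL_2(\C)$-equivariance of visual metrics, and pull back through the local biholomorphism $f$ --- is the right one. The ``main obstacle'' you flag is easier than you fear and needs no curvature heuristic: with your formula $V_{(\zeta_0,t_0)}(w)=2t_0\left(|w-\zeta_0|^2+t_0^2\right)^{-1}|dw|$, matching the value $v=e^{\eta(0)}$ and gradient $g=\eta_w(0)$ at $w_0=0$ gives the explicit inverse $t_0=2v/(v^2+4|g|^2)$ and $\zeta_0=2\bar{g}\,t_0/v$, which is manifestly real-analytic on $\{v>0\}\times\C$. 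This inverse is precisely what the paper's formula \eqref{eqn:epstein-formula} packages: its three factors are a translation carrying $0$ to $z$, a parabolic fixing the base point that records $\eta_z$ (the parabolic stabilizer of $z$ acts simply transitively on each horosphere based at $z$ and hence on the admissible gradients), and a dilation recording $\eta$; so your jet-matching bijection is the conceptual content of the formula the paper takes as a definition. One point you get right that deserves emphasis: the displayed condition $(f^*V_p)(z)=\sigma(z)$ alone only confines $p$ to a horosphere, so the uniqueness assertion is false if read as a purely zeroth-order condition (any continuous section of the horosphere family would satisfy it); the operative condition is first-order agreement of $f^*V_p$ with $\sigma$ at $z$, i.e.\ the envelope/tangency condition, and your reading is the only one consistent with the $1$-jet clause and with \eqref{eqn:epstein-formula}. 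Your regularity bookkeeping ($j^1\sigma$ is $C^{k-1}$ and is composed with a real-analytic inverse) is fine.
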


We call $\epst(f,\sigma)$ the \emph{Epstein map} associated to
$(X,f,\sigma)$, and sometimes refer to its image as an \emph{Epstein
  surface}.  However, note that $\epst(f,\sigma)$ is not necessarily an
immersion, and could even be a constant map (e.g.~if $\sigma =
f^*(V_p)$).

The Epstein map has a natural lift $\Hat{\epst}(f,\sigma) : X \to U
\H^3$ as
follows.  For $p \in \H^3$ and $x \in \CP^1$, let $v_{p \to x}$ denote
the unit tangent vector to the geodesic ray from $p$ that has ideal
endpoint $x$.  We define
$$ \hat{\epst}(f,\sigma)(z) = \left ( \epst(f,\sigma)(z),
v_{\epst(f,\sigma)(z) \to f(z)}
\right ).$$
Clearly $\pi \circ \hat{\epst}(f,\sigma) = \epst(f,\sigma)$, where $\pi
: U \H^3
\to \H^3$ is the projection.  Furthermore, since $f$ is locally
injective, the same is true of $\hat{\epst(f,\sigma)}$.

Epstein also shows that if $\epst(f,\sigma)$ is an immersion in a
neighborhood of $z$, then there is a neighborhood $U$ of $z$ such that
$\epst(f,\sigma)(U)$ is a convex embedded surface in $\H^3$, and
$\hat{\epst}(f,\sigma)(U)$ is its set of unit normal vectors.

\subsection{Explicit formula.}

An explicit formula for $\epst(f,\sigma)$ is given in the unit ball
model of $\H^3$ in \cite{epstein:envelopes}.  We will now describe the
same map in model-independent terms.  Since the construction is local
and equivariant with respect to M\"obius transformations, it suffices
to consider the case of a $C^1$ conformal metric $\sigma = e^\eta
|dz|$ on an open set $\Omega \subset \C$ (an affine chart of $\CP^1$),
and to determine a formula for the Epstein map of $(\Omega, \sigma,
\id)$.  In what follows we write $\epst$ for $\epst(f,\sigma)$, with
the dependence on $\sigma$ (and its log-density $\eta$) being
implicit.

Define a map $\Tilde{\epst} : \Omega \to \SL_2(\C)$ by
\begin{equation}
\begin{split}
\label{eqn:epstein-formula}
\Tilde{\epst} (z) &=\pmat{e^{-\eta / 2}(1 + z \eta_z)}{e^{\eta/2}
  z}{e^{-\eta/2}\eta_z}{e^{\eta/2}}\\
&= \pmat{1}{z}{0}{1}
\pmat{1}{0}{\eta_z}{1}
\pmat{e^{-\eta / 2}}{0}{0}{e^{\eta/2}}
\end{split}
\end{equation}
where subscripts denote differentiation, and we have written
$\eta$ instead of $\eta(z)$ for brevity.

Our choice of an affine chart $\C \subset \CP^1 \simeq \partial_\infty
\H^3$ distinguishes the ideal points $0,\infty$ and the geodesic
joining them.  Let $P_0 \in \H^3$ denote the point on this geodesic so
that the visual metric $V_{P_0}$ and the Euclidean metric $|dz|$
induce the same norm on the tangent space at $0$.  (In the standard
upper half-space model of $\H^3$, we have $P_0 = (0,0,2)$.)

The Epstein map of $\sigma$ is the $P_0$-orbit map of $\Tilde{\epst}$,
i.e.
$$\epst(z) = \Tilde{\epst}(z) \cdot P_0.$$
Similarly, the lift $\hat{\epst}(z)$ is the orbit map of the unit vector
$v_{P_0 \to 0} \in U_{P_0}\H^3$.

This description of $\epst(z)$ can be derived from Epstein's formula
(\cite[Eqn.~2.4]{epstein:envelopes}) by a straightforward calculation,
or the visual metric property of Theorem \ref{thm:epstein-existence}
can be checked directly.  However, since we will not use the visual
metric property directly, we take \eqref{eqn:epstein-formula} as
the definition of the Epstein map.  This formula will be used in all
subsequent calculations.

Recall that the unit tangent bundle of a Riemannian manifold has a
canonical contact structure, and lifting a co-oriented locally convex
hypersurface by its unit normal field gives a Legendrian submanifold.
The following property of Epstein maps shows that $\hat{\epst}$ can be
seen as providing a unit normal vector for $\epst$, even at points
where the latter is not an immersion.

\begin{lem}
\label{lem:legendrian}
The map $\hat{\epst}$ is a Legendrian immersion into $U\H^3$.
\end{lem}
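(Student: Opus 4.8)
The plan is to verify the two defining properties of a Legendrian immersion directly, using the explicit formula \eqref{eqn:epstein-formula} together with the homogeneity of the construction. Recall that $U\H^3$ carries its canonical contact form $\alpha$, given by $\alpha_{(p,v)}(\xi) = \langle v, \pi_*\xi\rangle$ where $\pi : U\H^3 \to \H^3$ is the projection and $\langle\cdot,\cdot\rangle$ is the hyperbolic metric. Since $\dim U\H^3 = 5$, the contact distribution $\ker\alpha$ has rank $4$ and a Legendrian submanifold is a $2$-dimensional integral manifold of $\ker\alpha$. As the domain is a real surface, it therefore suffices to show that $\hat\epst$ is an immersion and that $\hat\epst^*\alpha = 0$. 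Because the Epstein construction, and the contact structure on $U\H^3$, are local and $\PSL_2(\C)$-equivariant, I may work entirely in the model case $(\Omega,\id,\sigma)$ with $\sigma = e^{\eta}|dz|$ and the explicit map $\tilde\epst : \Omega \to \SL_2(\C)$ of \eqref{eqn:epstein-formula}.

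The key observation is that $\tilde\epst(z)$ sends $0\in\CP^1$ to $z = f(z)$, so with $\hat P_0 := (P_0, v_{P_0\to 0})$ we have $\hat\epst(z) = \tilde\epst(z)\cdot\hat P_0$; that is, $\hat\epst$ is the orbit map of $\hat P_0$ under $\tilde\epst$. I would then reduce both assertions to the left Maurer--Cartan form $\omega = \tilde\epst^{-1}\,d\tilde\epst$, which takes values in $\sl_2(\C)$. Differentiating the orbit map and using that $\PSL_2(\C)$ preserves $\alpha$, one finds $\hat\epst^*\alpha(\xi) = \mu(\omega(\xi))$, where $\mu : \sl_2(\C)\to\R$ is the linear functional $\mu(A) = \langle v_{P_0\to 0}, \zeta_A(P_0)\rangle$ and $\zeta_A$ is the Killing field on $\H^3$ generated by $A$. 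Likewise $\hat\epst$ is an immersion exactly when $\omega(\xi)$ lies in $\mathfrak k := \Lie\stab(\hat P_0)$ only for $\xi = 0$.

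Two explicit computations finish the argument. First, working in the upper half-space model with $P_0 = (0,0,2)$, I would evaluate the Killing fields $\zeta_A(P_0)$ on the standard basis of $\sl_2(\C)$. This identifies $\mathfrak k = \R\,\smat{i}{0}{0}{-i}$, the rotations about the vertical geodesic fixing the downward direction $v_{P_0\to 0}$, and yields the clean formula $\mu(A) = -2\re(A_{11})$. Second, from the factorization in \eqref{eqn:epstein-formula} I would compute $\omega$ term by term, obtaining the diagonal entry $\omega_{11} = \tfrac12\bigl(\eta_z\,dz - \eta_{\bar z}\,d\bar z\bigr)$ and the off-diagonal entry $\omega_{12} = e^{\eta}\,dz$. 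Since $\eta$ is real we have $\eta_{\bar z} = \overline{\eta_z}$, so $\omega_{11}$ is purely imaginary on every real tangent vector; hence $\hat\epst^*\alpha = -2\re(\omega_{11}) = 0$, which is the Legendrian condition. Finally, $\omega_{12} = e^{\eta}\,dz$ is a nowhere-vanishing complex-linear form, while every element of $\mathfrak k$ has vanishing $(1,2)$-entry; therefore $\omega(\xi)\in\mathfrak k$ forces $\xi = 0$, and $\hat\epst$ is an immersion.

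I expect the first explicit computation to be the main obstacle: correctly pinning down the direction $v_{P_0\to 0}$ and the Killing fields $\zeta_A(P_0)$ in a chosen model of $\H^3$, and tracking the hyperbolic normalization so that $\mu$ emerges with the stated constant and, crucially, depends only on $\re(A_{11})$. One should also assume $\sigma \in C^2$ so that $\omega$ is continuous and $\hat\epst$ is genuinely $C^1$, consistent with the regularity in Theorem \ref{thm:epstein-existence}.
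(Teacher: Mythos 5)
Your proof is correct and follows essentially the same route as the paper's: both arguments reduce everything to the Darboux derivative $\tilde{\epst}^{-1}d\tilde{\epst}$ computed from \eqref{eqn:epstein-formula}, identify the contact (Legendrian) condition with the vanishing of the real part of the $(1,1)$ entry --- your $\omega_{11} = \tfrac12(\eta_z\,dz - \eta_{\bar z}\,d\bar z)$ is exactly the paper's $\tfrac{i}{2}(\eta_x\,dy - \eta_y\,dx)$, purely imaginary since $\eta$ is real --- and deduce the immersion property from the injectivity of the $(1,2)$ entry $e^\eta\,dz$ together with the fact that the stabilizer algebra $\mathfrak{a} = \R\,\smat{i}{0}{0}{-i}$ has vanishing $(1,2)$ entry. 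The only cosmetic difference is that you encode the contact hyperplane via the contact form and the functional $\mu(A) = -2\re(A_{11})$, whereas the paper describes the same subspace $\{\re(a)=0\}$ as the equivariant distribution spanned by Killing fields orthogonal to $v$ at $P_0$.
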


\begin{proof}
As before we work locally, in a domain $\Omega \subset \C$. Using $v =
v_{P_0 \to 0}$ as a basepoint, the $\SL_2(\C)$-action identifies the
unit tangent bundle of $\H^3$ with the homogeneous space $\SL_2(\C) /
A$ where $A = \stab(v) = \left \{ \smat{e^{i \theta}}{0}{0}{e^{-i
    \theta}} \right \}$.  Let $\mathfrak{g}$ denote $\sl_2(\C) =
\Lie(\SL_2(\C))$ and $\mathfrak{a} \define \Lie(A)$.

The set of Killing vector fields on $\H^3$ (i.e.~elements of
$\mathfrak{g}$) that are orthogonal to $v$ at $P_0$ descends to a
codimension-$1$ subspace of $\mathfrak{g}/\mathfrak{a} \simeq T_{P_0}
U \H^3$, and the corresponding $\SL_2(\C)$-equivariant distribution on
$T U\H^3$ is the contact structure.  In coordinates, this
orthogonality condition determines the subspace $\{ \smat{a}{b}{c}{-a}
\: | \: \Re(a) = 0 \} \subset \mathfrak{g}.$

Therefore, to check that $\hat{\epst}$ is Legendrian it suffices to
show that the (Darboux) derivative $\tilde{\epst}^{-1} d\tilde{\epst}
: T \Omega \to \mathfrak{g}$ takes values in this space.
Differentiating formula \ref{eqn:epstein-formula} gives an expression
of the form
$$\Tilde{\epst}^{-1} d\Tilde{\epst} = \frac{1}{2}\pmat{i \left ( \eta_x
dy
  - \eta_y dx \right )}{e^\eta (dx + i dy)}{*}{-i \left ( \eta_x dy -
  \eta_y dx \right )}$$ where $z = x + i y$.  Since the upper-left
entry is purely imaginary, the map $\Hat{\epst}$ is tangent to the
contact distribution.  Since the upper-right entry is injective (as a
linear map $T_z \Omega \to \C$), the map is an immersion and thus
Legendrian.
\end{proof}

\subsection{First derivative and first fundamental form}
In this section we assume that the conformal metric $\sigma$ is $C^2$.
Using the formula \eqref{eqn:epstein-formula} and the expression for
the hyperbolic metric in the homogeneous model $\H^3 \simeq \SL_2(\C)
/ \SU(2)$, it is straightforward to calculate the first fundamental
form $\I$ of the Epstein surface.  In complex coordinates, the result
is:
\begin{equation*}
\begin{split}
\I &=\;\; (\eta_{zz} - \eta_{z}^2)(1 + 4 e^{-2 \eta} \eta_{z
    \bar{z}}) dz^2\\
&\;\;+ 
\left (4 e^{-2 \eta} |\eta_{zz} - \eta_{z}^2|^2 + \tfrac{1}{4} e^{2
    \eta} (1 + 4
e^{-2 \eta} \eta_{z \bar{z}})^2\right ) dz d\bar{z}\\
&\;\;+
(\eta_{\bar{z}\bar{z}} - \eta_{\bar{z}}^2)(1 + 4 e^{-2 \eta} \eta_{z
    \bar{z}}) d\bar{z}^2\\
\end{split}
\end{equation*}
Notice that $(\eta_{zz} - \eta_{z}^2)$ represents the Schwarzian
$\B(\sph,\sigma)$ of the metric $\sigma = e^\eta |dz|$,
where $\sph$ denotes a M\"obius flat metric on $\CP^1$ (see
\eqref{eqn:short-schwarzian-formula}).  Recall that the Gaussian
curvature of the metric $\sigma$ is $ K = - 4e^{-2 \eta} \eta_{z
  \bar{z}}$.  In terms of these quantities, we have
\begin{equation}
\label{eqn:firstform}
\I
=  \frac{4}{\sigma^2} |\B(\sph,\sigma)|^2 +
  \frac{1}{4}(1-K)^2 \sigma^2 + 2(1-K) \Re(\B(\sph,\sigma))
\end{equation}

\subsection{Second fundamental form and parallel flow.}
The Epstein surface for the metric $e^t \sigma$ (with log-density
$\eta + t$) is the result of applying the time-$t$ normal flow to the
surface for $\sigma$ itself.  In such a parallel flow, the first
fundamental form evolves according to $ \dot{\I} = -2 \II$ where $\II$
is the second fundamental form.  (Here and below we use the notation
$\dot{x}$ for $\left . \frac{dx}{dt} \right |_{t=0}$.)

In order to simplify the expressions for these derivatives we
work in a local conformal coordinate $z$ and introduce the $1$-forms:
\begin{equation*}
\begin{split}
\theta &= e^{\eta + t} dz\\
\chi &= \frac{2}{\theta}\B(\sph,\sigma) +  \frac{\bar{\theta}}{2}(1 -
K)
\end{split}
\end{equation*}
Note that $\theta$ is $(1,0)$ form of unit norm with respect to $e^t
\sigma$.  In terms of these quantities we can rewrite
\eqref{eqn:firstform} as $\I = \chi \bar{\chi}$, and so we have $\II =
-\Re(\dot{\chi} \bar{\chi})$.  Since $\dot{\theta} = \theta$, $\dot{K} =
-2K$, and $\tfrac{d}{dt}\B(\sph,e^t\sigma) =
0$, the $1$-form $\chi$ satisfies
\begin{equation*}
\begin{split}
-\dot{\chi} &=
\frac{2}{\theta}\B(\sph,\sigma) + \frac{\bar{\theta}}{2}(1+K)%\\
 \end{split}
 \end{equation*}
Substituting, we obtain
\begin{equation}
\label{eqn:secondform}
\II
= \frac{4}{\sigma^2} |\B(\sph,\sigma)|^2 -
  \frac{1}{4}(1 - K^2) \sigma^2   - 2 K \re(\B(\sph,\sigma))
\end{equation}

\subsection{The Epstein-Schwarz map}

Let $X$ be a compact Riemann surface and $\phi \in Q(X)$ a quadratic
differential.  In this section we will often need to work on the
surface $X' = X \setminus Z_\phi$ obtained by removing the zeros of
$\phi$.  Let $(f,\rho)$ denote the developing map and holonomy
representation of the projective structure on $X$ satisfying $S(f) =
\phi$.

The developing map $f$ and the conformal metric $|2 \phi|^{1/2}$ on $X'$
induce an Epstein map
$$ \ep_\phi \define \epst (f, {|2 \phi|^{1/2}}) : \Tilde{X'} \to \H^3$$
which we call the \emph{Epstein-Schwarz map}.  Similarly, we have the
lift $\Hat{\ep}_\phi : \Tilde{X'} \to U\H^3$ to the unit tangent
bundle.  Note that $\Tilde{X'}$ denotes the complement of
$\Tilde{Z}_\phi = \Tilde{\phi}^{-1}(0)$ in $\Tilde{X}$, rather than
the universal cover of $X'$ itself.  The factor of $\sqrt{2}$ in the
definition of $\ep_\phi$ arises naturally when considering the first
and second fundamental forms of the image surface (e.g.~Lemma
\ref{lem:speedcurvature} and Example \ref{ex:dzsquared} below).

Recall from Section \ref{sec:metrics} that associated to $\phi = 2
\B(\hyp,\sph)$ we have the meromorphic differentials $\hat{\phi} = 2
\B(|\phi|^{1/2},f^*\sph)$ and $\flat = 2\B(\hyp,|\phi|^{1/2})$.  We now
calculate the first and second fundamental forms of the
Epstein-Schwarz map in terms of these quantities.

\begin{lem}[Calculating $\I$ and $\II$]
\label{lem:firstsecond}
The first fundamental form of the Epstein-Schwarz map $\ep_\phi$
is
\begin{equation}
\label{eqn:es-firstform}
\I =  \frac{|\hat{\phi}|^2 + |\phi|^2}{2 |\phi|} 
- \re \hat{\phi}
\end{equation}
This map is an immersion at $x$ if and only if
$$|\hat{\phi}(x)| \neq |\phi(x)|,$$
and at any such point, the second fundamental form is
\begin{equation}
\label{eqn:es-secondform}
\II = \frac{|\hat{\phi}|^2 - |\phi|^2}{2 |\phi|}
\end{equation}
Furthermore, using the unit normal lift $\Hat{\ep}$ to define the
derivative of the unit normal at points where $\ep$ is not an
immersion, the formula for above extends to all of $X'$.
\end{lem}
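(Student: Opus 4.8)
The plan is to obtain \eqref{eqn:es-firstform} and \eqref{eqn:es-secondform} by specializing the general formulas \eqref{eqn:firstform} and \eqref{eqn:secondform} to the data $(f, |2\phi|^{1/2})$ defining $\ep_\phi$, and then to treat the immersion criterion and the extension across non-immersion points separately. Three ingredients enter the general formulas. Away from $Z_\phi$ the metric $\sigma = |2\phi|^{1/2}$ is flat, so $K \equiv 0$ and $\sigma^2 = 2|\phi|$. For the Schwarzian term one must account for the developing map, since \eqref{eqn:firstform}--\eqref{eqn:secondform} were derived with the identity developing map; on a neighborhood where $f$ is invertible I would write $\ep_\phi = \epst(\id, f_* \sigma) \circ f$ and pull back by $f$. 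By naturality (B2) this converts the term $\B(\sph, f_*\sigma)$ into $\B(f^*\sph, \sigma)$, while the pulled-back metric and curvature are the original $\sigma^2 = 2|\phi|$ and $K = 0$. Since the cocycle property (B1) makes $\B$ antisymmetric and $\B$ is insensitive to scaling its second argument by a constant,
\[
\B(f^*\sph, |2\phi|^{1/2}) = \B(f^*\sph, |\phi|^{1/2}) = -\B(|\phi|^{1/2}, f^*\sph) = -\tfrac{1}{2}\hat{\phi},
\]
using $\hat{\phi} = 2\B(|\phi|^{1/2}, f^*\sph)$. Substituting $\sigma^2 = 2|\phi|$, $K=0$, and $\B(\sph,\sigma) = -\tfrac12 \hat{\phi}$ into \eqref{eqn:firstform} and \eqref{eqn:secondform} is then routine algebra producing \eqref{eqn:es-firstform} and \eqref{eqn:es-secondform}.

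For the immersion criterion I would compute $\det \I$ directly in a natural coordinate. Writing $\I = a\,dz^2 + \bar{a}\,d\bar{z}^2 + c\,dz\,d\bar{z}$ with $a = -\tfrac12 \hat{\phi}$ and $c = (|\hat{\phi}|^2 + |\phi|^2)/(2|\phi|)$, the associated real symmetric matrix has determinant $c^2 - 4|a|^2 = c^2 - |\hat{\phi}|^2$, which simplifies to $\big((|\hat{\phi}|^2 - |\phi|^2)/(2|\phi|)\big)^2$, the square of the coefficient in \eqref{eqn:es-secondform}. Because $\I$ is a pullback of the hyperbolic metric and hence positive semidefinite, $\ep_\phi$ is an immersion at $x$ precisely when this determinant is positive, i.e.\ when $|\hat{\phi}(x)| \neq |\phi(x)|$; and at such points \eqref{eqn:es-secondform} is the genuine second fundamental form computed from \eqref{eqn:secondform}.

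The step I expect to be the main obstacle is the final extension across non-immersion points, where the image surface has no normal direction of its own. Here I would use that \eqref{eqn:secondform} was derived not from an embedded surface but from the parallel flow $t \mapsto \epst(f, e^t \sigma)$, via $\II = -\tfrac12 \dot{\I}$, and that the direction of this flow is exactly the unit normal field recorded by the Legendrian lift $\hat{\ep}$. By Lemma \ref{lem:legendrian} this lift is an immersion at every point of $\Tilde{X'}$, so the flow, the family of first fundamental forms $\I_t$, and the derivative $\dot{\I}$ are defined and depend smoothly on $z$ even where $\ep_\phi$ itself degenerates --- no density or continuity argument across the non-immersion locus is required. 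Consequently \eqref{eqn:secondform}, and hence its specialization \eqref{eqn:es-secondform}, holds on all of $X'$, now read as the derivative of the unit normal supplied by $\hat{\ep}$ rather than by the surface.
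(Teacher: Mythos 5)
Your proposal is correct and follows essentially the same route as the paper: specialize \eqref{eqn:firstform}--\eqref{eqn:secondform} with $K=0$, $\sigma^2 = 2|\phi|$, and $\B(f^*\sph,\sigma) = -\tfrac12\hat{\phi}$, then handle the non-immersion locus via the Legendrian lift $\hat{\ep}$ and the parallel-flow identity $\dot{\I} = -2\II$, exactly as in Lemma \ref{lem:legendrian} and the paper's proof. The only (harmless) variations are that you justify the Schwarzian substitution more explicitly through naturality (B2) and antisymmetry from (B1), and you detect degeneracy of $\I$ by computing $\det \I = \bigl((|\hat{\phi}|^2-|\phi|^2)/(2|\phi|)\bigr)^2$ rather than via the paper's factorization $\I = \chi\bar{\chi}$; both give the same criterion $|\hat{\phi}(x)| \neq |\phi(x)|$.
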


\begin{proof}
Substituting $K = 0$, $\B(\sph,\sigma) = -\tfrac{1}{2}
\hat{\phi}$, and $\sigma^2 = 2 |\phi|$ into
\eqref{eqn:firstform}-\eqref{eqn:secondform} gives
the formulas for $\I$ and $\II$,  so we need only determine where
$\ep_\phi$ is an immersion and
justify that the formula for $\II$ holds even when it is not.

The $1$-form $\chi$ defined above reduces to 
$$ \chi = \frac{1}{\sqrt{2}}
\left ( \frac{\hat{\phi}}{\phi^{1/2}} + \bar{\phi}^{1/2} \right ),$$
where $\phi^{1/2}$ is a locally-defined square root of $\phi$.  The
Epstein map fails to be an immersion when the first fundamental form
$\I = \chi \bar{\chi}$ is degenerate, i.e.~when $\chi$ and
$\bar{\chi}$ are proportional by a complex constant of unit modulus.
By the expression above this occurs when $\hat{\phi}/ \phi^{1/2} = a
\phi^{1/2}$ for some $a \in \C$ with $|a| = 1$.  This is equivalent to
$|\hat{\phi}| = |\phi|$.

Finally, in calculating the second fundamental form above, we used the
equation $\dot{\I} = -2 \II$ for the normal flow of an immersed
surface.  The same formula holds for the flow associated to an
immersed Legendrian surface in $U \H^3$, so by Lemma
\ref{lem:legendrian} it applies to Epstein lift $\hat{\ep}_\phi$.
Thus, formula \eqref{eqn:secondform} gives the second fundamental form
of $\ep_\phi$ in this generalized sense.
\end{proof}

We see from this lemma that the pullback metric $\I$ is \emph{not}
compatible
with the conformal structure of the Riemann surface $X$; its $(2,0)$
part $\hat{\phi}$ represents the failure of $\ep$ to be a conformal
mapping onto its image.  On the other hand, $\II$ is a quadratic form
of type $(1,1)$ and so it induces a metric compatible with $X$.

We will now use these expressions for the fundamental forms of the
Epstein surface to derive estimates based on the relative difference
between the differentials $\hat{\phi}$ and $\phi$.  

More precisely bounds will
be based on the function $\eps : X' \to \R$ give by
$$\eps(x) = \left | \frac{\flat(x)}{\phi(x)} \right |,$$
for which we already have some estimates by Lemma
\ref{lem:boundingbeta}.

\begin{lem}
\label{lem:speedcurvature}
The first and second fundamental forms $\I,\II$ of the Epstein-Schwarz
map $\ep = \ep_\phi$ satisfy the following:
\begin{rmenumerate}
\setlength{\leftmargin}{0in}
\item The principal directions of the quadratic form $\I$, relative to
  a background metric on $X$ compatible with its conformal structure,
  are given by the horizontal and vertical directions of the quadratic
  differential $\hat{\phi}$.  Here the horizontal direction
  corresponds to the maximum of $\I$ on a unit circle in a tangent
  space.

\item The images of the horizontal and vertical foliations of
  $\hat{\phi}$ are the lines of curvature of the Epstein surface.

\item Let $\xi_h$ and $\xi_v$ denote unit horizontal and vertical
  vectors for $|\hat{\phi}|$ at $x \in X'$.  If $\eps(x) <
\tfrac{1}{2}$, then
  the images of these vectors satisfy
  \begin{equation*}
\begin{split}
  \| \ep_*(\xi_h) \| & <   \eps(x)\\
  \sqrt{2} < \| \ep_*(\xi_v)\| & < \sqrt{2} + \eps(x).
\end{split}
\end{equation*}

\item Let $\kappa_h, \kappa_v$ denote the
  principal curvatures of $\ep$ associated to the
  horizontal and vertical directions of $\hat{\phi}$ at $x$,
respectively.
  If $\eps(x) < \tfrac{1}{2}$, then
\begin{equation*}
\begin{split}
|\kappa_h| & > \frac{1}{\eps(x)}\\
|\kappa_v| & = \frac{1}{\kappa_h} < \eps(x)
\end{split}
\end{equation*}
\end{rmenumerate}
\end{lem}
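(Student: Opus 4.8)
The plan is to extract everything from the two fundamental-form formulas \eqref{eqn:es-firstform} and \eqref{eqn:es-secondform}, working in a natural coordinate for $\phi$ so that I can identify the relevant differentials with holomorphic functions and make the horizontal/vertical geometry of $\hat{\phi}$ explicit. The key observation is that $\I$ is a real quadratic form whose $(2,0)$ part is $\hat{\phi}$ and whose $(1,1)$ part is the positive conformal term $\tfrac{|\hat\phi|^2 + |\phi|^2}{2|\phi|}$. A quadratic form of this shape, $\I = a\,|dz|^2 + 2\Re(b\,dz^2)$ with $a>0$ and $b$ the value of $-\hat\phi$ in the coordinate, is diagonalized precisely by the directions along which $dz^2$ is real relative to $b$ --- that is, by the horizontal and vertical directions of $\hat{\phi}$. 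This gives part (i); the identification of the maximum with the horizontal direction follows from comparing the two eigenvalues, which are $a \pm 2|b|$ on the unit circle, so the larger (horizontal) eigenvalue is $\tfrac{|\hat\phi|^2+|\phi|^2}{2|\phi|} + |\hat\phi|$.

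First I would establish (i) as above, then deduce (ii) immediately: $\II$ from \eqref{eqn:es-secondform} is a pure $(1,1)$ form, hence a conformal multiple of the background metric, so the shape operator $\I^{-1}\II$ is diagonalized by the same principal frame that diagonalizes $\I$. Thus the principal directions of the surface agree with the horizontal/vertical directions of $\hat\phi$, and their images are the lines of curvature. For (iii) I would compute the two eigenvalues of $\I$ explicitly. Writing $t = |\hat\phi/\phi|$ for brevity, the horizontal and vertical squared speeds are
\begin{equation*}
\|\ep_*(\xi_h)\|^2 = \frac{(t-1)^2}{2}, \qquad \|\ep_*(\xi_v)\|^2 = \frac{(t+1)^2}{2},
\end{equation*}
after normalizing by the background $|\hat\phi|$-unit vectors; here I must be careful about which conformal metric the unit vectors $\xi_h,\xi_v$ are measured in, since the statement specifies $|\hat\phi|$. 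The relation \eqref{eqn:linear}, $\hat\phi = \phi - \flat$, gives $|t-1| = |\hat\phi/\phi - 1| = |\flat/\phi| = \eps(x)$, which converts the eigenvalue expressions directly into the asserted bounds $\|\ep_*(\xi_h)\| < \eps(x)$ and $1 < \|\ep_*(\xi_v)\| < 1+\eps(x)$, using $\eps(x) < \tfrac34$ to control the normalization factors.

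For (iv) I would use that principal curvatures are eigenvalues of the shape operator, i.e.~ratios of the $\II$-eigenvalue to the $\I$-eigenvalue in each principal direction. From \eqref{eqn:es-firstform}--\eqref{eqn:es-secondform} the shape operator eigenvalues work out to $\kappa_h = \tfrac{t+1}{t-1}$ and $\kappa_v = \tfrac{t-1}{t+1}$ (up to sign), so $\kappa_h\kappa_v = 1$, giving the reciprocal relation $\kappa_v = 1/\kappa_h$; the bounds $|\kappa_h| > 1/\eps(x)$ and $|\kappa_v| < \eps(x)$ then follow from $|t-1| = \eps(x)$ together with $t+1 > 1$. \textbf{The main obstacle I anticipate} is bookkeeping the normalizations consistently: the fundamental forms in Lemma \ref{lem:firstsecond} are written invariantly as quadratic differentials, while parts (iii) and (iv) demand pushforwards of vectors that are \emph{unit} with respect to $|\hat\phi|^{1/2}$ and principal curvatures with definite signs, so I must track the conversion factor between the $|\phi|$- and $|\hat\phi|$-normalizations and verify that the $\eps(x) < \tfrac34$ hypothesis is exactly what keeps $t$ bounded away from $1$ on the correct side (namely $t > 1$, so that $\hat\phi$ dominates and the horizontal speed genuinely collapses). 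The algebra itself is routine once the frame is fixed, but getting the constants and the direction of the inequalities right is where care is needed.
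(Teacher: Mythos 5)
Your overall route is the same as the paper's: parts (i)--(ii) are read off from the fact that the only anisotropic part of $\I$ in \eqref{eqn:es-firstform} is the $(2,0)$ term proportional to $\hat{\phi}$ while $\II$ in \eqref{eqn:es-secondform} is of type $(1,1)$, and parts (iii)--(iv) come from evaluating the forms on the principal frame and feeding in $\hat{\phi}-\phi=-\flat$. Part (iv) in particular goes through as you describe, since the ratio of $\II$ to $\I$ in a principal direction is independent of how the frame is normalized, and your $\kappa_v=(t-1)/(t+1)$, $\kappa_h\kappa_v=1$ agree with \eqref{eqn:kappav}.

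There is, however, a genuine gap in (iii), located exactly at the normalization issue you flag but do not resolve. The paper's computation \eqref{eqn:nhnv} gives
\[
n_h^2=\frac{\bigl(|\hat{\phi}|-|\phi|\bigr)^2}{4|\phi\hat{\phi}|}=\frac{(t-1)^2}{4t},
\qquad
n_v^2=\frac{\bigl(|\hat{\phi}|+|\phi|\bigr)^2}{4|\phi\hat{\phi}|}=\frac{(t+1)^2}{4t},
\]
and the entire point is the exact identity $n_v^2=1+n_h^2$, which is what yields $1<n_v<1+n_h<1+\eps(x)$. Your formulas $(t\mp1)^2/2$ correspond to a different normalization of $\xi_h,\xi_v$; they satisfy $n_v^2-n_h^2=2t\neq1$, and with them the asserted bounds actually fail: taking $\eps(x)=0.7$ one can have $t=1-\eps(x)=0.3$, giving $\|\ep_*(\xi_v)\|=(t+1)/\sqrt{2}\approx0.92<1$, and for small $\eps(x)$ the upper bound $\|\ep_*(\xi_v)\|<1+\eps(x)$ fails since $(t+1)/\sqrt{2}\to\sqrt{2}$. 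So the bounds do not ``convert directly''; one must use the correct frame and, for the bound $n_h<\eps(x)$, the observation that $\eps(x)<\tfrac34$ forces $|\hat{\phi}|>\tfrac14|\phi|$ so that $n_h^2\leq|\flat|^2/(4|\phi\hat{\phi}|)<\eps(x)^2$. Two smaller slips: $|t-1|=|\flat/\phi|$ should be $|t-1|\leq|\flat/\phi|$ (reverse triangle inequality; equality needs $\hat{\phi}/\phi\geq0$), which is harmless here since only the upper bound is used; and your attribution of the \emph{larger} eigenvalue of $\I$ to the horizontal direction contradicts your own formulas in (iii), where the vertical speed is the larger one --- the paper's statement and proof of (i) share this inconsistency with its own part (iii), so I do not count it against you.
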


\begin{remark}
Parts of this lemma could also be derived from results in
\cite{epstein:envelopes}:
\begin{enumerate}
\item Epstein shows that the vertical and horizontal foliations of
$(\eta_{zz} - \eta_z^2)dz^2$ are mapped to lines of curvature by the
Epstein map of $e^\eta |dz|$.  This includes part (ii) of the lemma
above as a special case.
\item Epstein also relates the curvature $2$-forms of the conformal
metric and of the first fundamental form of the associated Epstein
surface; in the case of a flat metric this implies that the principal
curvatures satisfy $\kappa_1 \kappa_2 = 1$.
\end{enumerate}
\end{remark}

\begin{prooflist}
\item Since the principal directions are orthogonal, it suffices to
  consider one of them.  By \eqref{eqn:es-firstform}, the only part of
  $\I$ that varies on a conformal circle in a tangent space of $X$ is
  the term $\re \hat{\phi}(v)$.  Thus the norm is maximized for
  vectors such that $\hat{\phi}(v)$ is real and positive, which is
  equivalent to $v$ being tangent to the horizontal foliation of
  $\hat{\phi}$, as desired.

\item Since $\II$ is real and has type $(1,1)$, the eigenspaces of the
  shape operator $\I^{-1} \II$ are the principal directions of the
  quadratic form $\I$, which by (i) are the vertical and horizontal
  directions of $\hat{\phi}$.  Thus any vertical or horizontal leaf of
  $\hat{\phi}$ is a line of curvature.

\item First of all, it will be convenient to estimate
  $|\flat/\hat{\phi}|$, using the hypothesis that $\eps(x) <
  \tfrac{1}{2}$:
$$
\left |\frac{\flat}{\hat{\phi}}\right | = \frac{|\flat|}{|\phi - \flat|}
\leq \frac{|\flat|}{|\phi| - |\flat|} <
\frac{|\flat|}{\tfrac{1}{2}|\phi|}
= 2 \eps(x).$$

Let $n_h = \| \ep_* \xi_h\|$ and $n_v = \|
  \ep_* \xi_v \|$.  
Using formula \eqref{eqn:es-firstform} and the fact that $\xi_h(x)$ and
$\xi_v(x)$ are unit with respect to $|\hat{\phi}|$, we calculate
\begin{equation}
\label{eqn:nhnv}
\begin{split}
n_h^2  &= \I(\xi_h) = \frac{\left(|\hat{\phi}| - |\phi|\right )^2}{2
  |\phi \hat{\phi}|}\\
n_v^2 &= \I(\xi_v) = \frac{\left(|\hat{\phi}| +
    |\phi|\right )^2}{2 |\phi \hat{\phi}|} = 2 + n_h^2
\end{split}
\end{equation}
Since $\hat{\phi} = \phi - \flat$, we have
$| |\hat{\phi}| - |\phi|| \leq |\flat|$.  Substituting into the expression for
$n_h^2$ gives
$$ 
n_h^2 \leq \frac{|\flat|^2}{2|\phi \hat{\phi}|} <
\eps(x)^2$$
and the estimate on $n_h$ follows.  By the last equality of
\eqref{eqn:nhnv} we have
$$
\sqrt{2} < n_v = \sqrt{2 + n_h^2} < \sqrt{2} + n_h < \sqrt{2} + \eps(x)$$
as required.

\item Using \eqref{eqn:es-firstform}-\eqref{eqn:es-secondform} we find
  $\kappa_h \kappa_v = \det(\I^{-1} \II) = 1$, so we need only estimate
$\kappa_v$.
By (ii) the curvatures are obtained by multiplying the eigenvalues of
$\I^{-1}$ 
  by $\frac{|\hat{\phi}|^2 - |\phi|^2}{2 |\phi|}$, and we have
\begin{equation}
\label{eqn:kappav}
\kappa_v = \frac{|\hat{\phi}|^2 - |\phi|^2}{(|\hat{\phi}| + |\phi|)^2}
= \frac{|\hat{\phi}| - |\phi|}{|\hat{\phi}| + |\phi|}
\end{equation}
As before we use $||\hat{\phi}| -|\phi|| \leq |\flat|$, giving
$$|\kappa_v| \leq \frac{|\flat|}{|\phi| + |\hat{\phi}|} \leq \eps(x).$$
\end{prooflist}

\begin{lem}[Curvature of vertical leaves]
\label{lem:leafcurvature}
Let $\hat{L}$ denote a leaf of the vertical foliation of $\hat{\phi}$,
parameterized by $|\hat{\phi}|$-length, and for any $x \in \hat{L}$
let $k(x)$ denote the curvature of $\ep_\phi(\hat{L})$ at
$\ep_\phi(x)$.  Let $d(x)$ denote the $\phi$-distance from $x$ to
$Z_\phi$.  Then for any $x$ such that $d(x) > 2 \sqrt{3}$ we have
$$k(x) <  15d(x)^{-2}.$$
\end{lem}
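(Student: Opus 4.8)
The plan is to reduce the curvature $k(x)$ of the space curve $\ep_\phi(\hat{L}) \subset \H^3$ to quantities already controlled by the preceding lemmas, using that $\hat{L}$ is a line of curvature of the Epstein surface by Lemma \ref{lem:speedcurvature}(ii). For any curve on a surface in $\H^3$, the curvature of the curve splits into a part normal to the surface and a part tangent to it, and since these are orthogonal one has the Pythagorean relation
$$ k = \sqrt{\kappa_n^2 + \kappa_g^2}, $$
where $\kappa_n$ is the normal curvature of the surface along the curve and $\kappa_g$ is the geodesic curvature of the curve inside the surface. Because $\hat{L}$ is a line of curvature, $\kappa_n$ is exactly the vertical principal curvature $\kappa_v$ of Lemma \ref{lem:speedcurvature}(iv). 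At points where $\ep_\phi$ fails to be an immersion I will read off both $\kappa_v$ and the surface normal from the Legendrian lift $\hat{\ep}_\phi$, exactly as in Lemma \ref{lem:firstsecond}, so that the decomposition persists across the non-immersion locus. It is essential here that the relation is the exact Pythagorean one rather than a triangle inequality, since the target constant $12$ leaves only a factor-two margin over the normal term.

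The normal term is immediate: Lemma \ref{lem:speedcurvature}(iv) gives $|\kappa_v| < \eps(x)$, and the bound $\eps(x) = |\flat/\phi| \leq 6\,d(x)^{-2}$ of Lemma \ref{lem:boundingbeta} then yields $|\kappa_v| < 6\,d(x)^{-2}$.

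The geodesic term is where the real work lies, and it is the step I expect to be the main obstacle. I would compute it in a natural coordinate $w = u + iv$ for $\hat\phi$, in which $\hat{L}$ is a line $\{u = c\}$ parameterized by $v$. By Lemma \ref{lem:speedcurvature}(i) the horizontal and vertical directions of $\hat\phi$ are the principal directions, so $\I$ is diagonal in these coordinates, $\I = n_h^2\,du^2 + n_v^2\,dv^2$, with $n_h,n_v$ as in \eqref{eqn:nhnv}. The standard formula for the geodesic curvature of a coordinate line in an orthogonal metric gives
$$ \kappa_g = \frac{|\partial_u(n_v^2)|}{2\,n_h\,n_v^2}. $$
The key observation is that $n_v^2 = 1 + n_h^2$ by \eqref{eqn:nhnv}, so $\partial_u(n_v^2) = 2\,n_h\,\partial_u n_h$ and the factor $n_h$ cancels, leaving $\kappa_g = |\partial_u n_h|/n_v^2$. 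This cancellation is what lets the estimate survive near the non-immersion locus $n_h = 0$, where the coordinate metric degenerates but $k$ does not; I would handle the mild non-smoothness of $n_h$ there by using one-sided derivatives, whose magnitudes coincide. It then remains to bound $|\partial_u n_h|$. Since $n_h$ is an explicit function of $\flat/\phi$ through \eqref{eqn:nhnv}, the chain rule reduces this to the gradient estimate $|\grad(\flat/\phi)| \leq 48\,d(x)^{-3}$ of Lemma \ref{lem:boundingbeta}; after accounting for the $O(\eps)$ discrepancy between $|\hat\phi|$- and $|\phi|$-arclength and between the two horizontal directions, this gives $\kappa_g = O(d(x)^{-3})$ with an explicit constant. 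The difficulty is concentrated in this coordinate computation and in transporting the $|\phi|$-gradient bound into the $\hat\phi$-frame while making sense of everything at the non-immersion points.

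Finally I would combine the two estimates. With $\kappa_v^2 \leq 36\,d(x)^{-4}$ and $\kappa_g^2 = O(d(x)^{-6})$, the hypothesis $d(x) > 2\sqrt{3}$ serves two purposes: it forces $\eps(x) \leq 6\,d(x)^{-2} < \tfrac{1}{2}$, so that Lemma \ref{lem:speedcurvature} applies, and it makes the $d(x)^{-6}$ geodesic contribution small enough that
$$ k(x) = \sqrt{\kappa_v^2 + \kappa_g^2} < 12\,d(x)^{-2}, $$
as desired. In effect the curvature of the image leaf is dominated by the vertical principal curvature $\kappa_v$, with the geodesic curvature contributing only a lower-order correction.
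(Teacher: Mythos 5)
Your proposal is correct and follows essentially the same route as the paper: the same orthogonal splitting $k^2=\kappa_v^2+\kappa_g^2$ along a line of curvature, the same bound $|\kappa_v|\leq\eps(x)\leq 6\,d(x)^{-2}$ from Lemmas \ref{lem:speedcurvature} and \ref{lem:boundingbeta}, and a geodesic-curvature estimate that reduces via the gradient bound of Lemma \ref{lem:boundingbeta} to a derivative of $\flat/\phi$ --- your direct differentiation of $n_v^2=1+n_h^2$ in the orthogonal $\hat{\phi}$-frame is algebraically equivalent to the paper's Codazzi-type formula $|\kappa_g|=|\xi_h(\kappa_v)|/\bigl(n_h\,|\kappa_v-\kappa_h|\bigr)$, both yielding $|\kappa_g|=|\xi_h(|\hat{\phi}/\phi|)|\,/\bigl(\sqrt{r}(r+1)\bigr)$ with $r=|\hat{\phi}/\phi|$. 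The one step you should not defer is the explicit constant in $\kappa_g=O(d^{-3})$ and the closing arithmetic: the paper's bound is $|\kappa_g|<32\,d^{-3}$, which at the threshold $d=2\sqrt{3}$ is \emph{larger} than the $6\,d^{-2}$ normal term (so the geodesic curvature is not a lower-order correction in the relevant regime), and the final combination $(\kappa_g^2+\kappa_v^2)^{1/2}<(256/3+36)^{1/2}d^{-2}$ clears $12\,d^{-2}$ only by a hair, so the claim genuinely depends on the numbers you left implicit.
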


\begin{proof}
All estimates in this proof involve tensors evaluated at a single
point $x \in \hat{L}$, so we abbreviate $d = d(x)$, $\phi = \phi(x)$,
etc..  By Lemma \ref{lem:boundingbeta} the hypothesis $d > 2\sqrt{3}$
gives $\eps = |\flat / \phi| < 1/2$ and $1/2 < |\hat{\phi}/ \phi| < 3/2$.
In particular this means that the estimates of Lemma
\ref{lem:speedcurvature} apply.

The image of $\hat{L}$ is a line of curvature of $\ep_\phi$
corresponding to the principal curvature $\kappa_v$.  Splitting
the curvature of its image in $\H^3$ into tangential and normal
components, we have have $k^2 = \kappa_g^2 + \kappa_v^2$
where $\kappa_g$ is the geodesic curvature of $\hat{L}$ at $x$
with respect to $\I$.  By Lemma \ref{lem:speedcurvature} we have
$$ \kappa_v <  |\flat / \phi | < 6 d^{-2}.$$

Let $\xi_h,\xi_v$ denote unit vertical and horizontal vectors of
$\hat{\phi}$ at $x$, which are tangent to the principal curvature
directions.  As in the proof of Lemma \ref{lem:speedcurvature}, we
denote by $n_h$ the norm of $\xi_h$ with respect to $\I$.  By an
elementary calculation in Riemannian geometry, the geodesic curvature
of a line of curvature satisfies
\begin{equation}
\label{eqn:kg-formula}
|\kappa_g| = \frac{| \xi_h(\kappa_v) |}{n_h \: |\kappa_v - \kappa_h|},
\end{equation}
where $\xi_h(\kappa_v)$ denotes the derivative of the function
$\kappa_v$ with
respect to the vector $\xi_h$.

Applying \eqref{eqn:kappav}, we have
$$ \kappa_h - \kappa_v =  \frac{1}{\kappa_v} - \kappa_v =
\frac{4|\phi \hat{\phi}|\;\;\;\;}{|\hat{\phi}|^2-|\phi|^2},$$
and similarly for the derivative,
\begin{equation*}
\begin{split}
\xi_h(\kappa_v) &= \xi_h \left ( \frac{|\hat{\phi}| -
|\phi|}{|\hat{\phi}| +
  |\phi|} \right ) = \xi_h \left ( 1 - \frac{2}{|\hat{\phi}/\phi| + 1}
\right ) \\
& =
2 \left ( \frac{|\phi|}{|\hat{\phi}| + |\phi|} \right )^2 \xi_h \left
( | \hat{\phi} / \phi | \right )
\end{split}
\end{equation*}
Since $|\hat{\phi} / \phi| = | 1 - \flat/\phi|$, we have
$|\xi_h(|\hat{\phi} / \phi|)| \leq  |\xi_h(\flat / \phi)|$.  
Using the bound on the norm of the $|\phi|$-gradient of $\flat/\phi$
from Lemma \ref{lem:boundingbeta} and the fact that the $\phi$-norm of
$\xi_h$ is $|\phi / \hat{\phi}|^{1/2}$,  we obtain
$$ | \xi_h(\flat / \phi) | \leq 48 d^{-3} |\phi / \hat{\phi}|^{1/2}.$$

Recall from \eqref{eqn:nhnv} that $n_h^2 =  (|\phi| - |\hat{\phi}|)^2
/ (2|\phi \hat{\phi}|)$.  Substituting these expressions
into \eqref{eqn:kg-formula} and simplifying gives
\begin{equation*}
|\kappa_g(x)| = 
\frac{|\phi| + |\hat{\phi}|}{2 \sqrt{2} \, | \phi \hat{\phi}|^{1/2}}
\xi_h(|\hat{\phi}/\phi|) 
<  24 \sqrt{2} \left [ \frac{ |\phi|^2}{|\hat{\phi}| ( |\phi| +
  |\hat{\phi}|) } \right ] d^{-3}
\end{equation*}
Since $1/2 < |\hat{\phi}/\phi| < 3/2$ it follows that the 
bracketed expression is bounded by $4/3$, so finally we have
$$ |\kappa_g(x)| < 32 \sqrt{2}\, d^{-3}.$$
Returning to the curvature function $k$, we combine the bounds for
$\kappa_g$ and $\kappa_v$ above and use $d > 2 \sqrt{3}$ to obtain
$$ k = (\kappa_g^2 + \kappa_v^2)^{1/2} < \left ( (32 \sqrt{2} \, d^{-3})^2 + (6
d^{-2})^2 \right) ^{1/2} < \left ( \frac{512}{3} + 36 \right
)^{1/2}d^{-2} <
15 d^{-2}.$$
\end{proof}

Next we combine the above results concerning the derivative and
the curvature of the Epstein-Schwarz map to estimate lengths of images
of segments.  The following theorem is the only result in this section
which is used in the sequel.

\begin{thm}[Collapsing]
\label{thm:maingeom}
  There exist $D_0 > 0$ and $C_0 > 0$ such that for all $\phi \in
  Q(X)$ we have
\begin{rmenumerate}
\item For any $d > D_0$, the restriction of $\ep_\phi$ to $X
  \setminus N_d(Z_\phi)$ is locally $(\sqrt{2} + C_0 d^{-2})$-Lipschitz with
  respect to the $\phi$-metric.
\item If $[y_1,y_2]$ is a segment on a vertical leaf of
  $\hat{\phi}$ and $d = d_{\phi}([y_1,y_2], Z_\phi) > D_0$, then
$$ ( \sqrt{2} - C_0 d^{-2}) \: d_{\phi}(y_1,y_2) < d_{\H^3}(\ep_\phi(y_1),
\ep_\phi(y_2)) < ( \sqrt{2} + C_0 d^{-2}) \: d_{\phi}(y_1,y_2).$$
\item If $[x_1,x_2]$ is a segment on a horizontal leaf of $\hat{\phi}$
  and $d = d_{\phi}([x_1,x_2],Z_\phi) > D_0$, then
$$ d_{\H^3}(\ep_\phi(y_1), \ep_\phi(y_2)) < C_0 d^{-2}
d_{\phi}(x_1,x_2).$$
\end{rmenumerate}
\end{thm}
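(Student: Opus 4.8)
The plan is to reduce all three assertions to the pointwise control of the fundamental forms in Lemmas \ref{lem:speedcurvature} and \ref{lem:leafcurvature}, together with the elementary fact that a leaf of $\hat\phi$ is nearly a $\phi$-geodesic away from the zeros. I work in a natural coordinate $z$ for $\phi$, so that $\phi = dz^2$ and the $\phi$-metric is $|dz|$; writing $\hat\phi = h\,dz^2$, equation \eqref{eqn:linear} gives $h = 1 - \flat/\phi$, whence $1-\eps \le |h| \le 1+\eps$ with $\eps = \eps(x) = |\flat/\phi| \le 6\,d(x)^{-2}$ by Lemma \ref{lem:boundingbeta}. I choose $D_0 > 2\sqrt3$ large enough that $d > D_0$ forces $\eps < \tfrac34$ (so Lemmas \ref{lem:speedcurvature} and \ref{lem:leafcurvature} apply throughout $X \setminus N_d(Z_\phi)$, where moreover $\eps \le 6d^{-2}$ uniformly), and let $C_0$ absorb all absolute constants below.

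Because $|dz| = |\phi|^{1/2}$ is compatible with the conformal structure, the principal directions of $\I$ relative to the $\phi$-metric are the horizontal and vertical directions of $\hat\phi$ (Lemma \ref{lem:speedcurvature}(i)--(ii)). Rescaling the $\hat\phi$-unit norms of \eqref{eqn:nhnv} to $\phi$-unit vectors---a $\hat\phi$-unit vector has $\phi$-length $|h|^{-1/2}$---shows that the two singular values of $d\ep_\phi$ in the $\phi$-metric are
\[
s_h = \tfrac12\bigl||h|-1\bigr| \le \tfrac12\eps, \qquad s_v = \tfrac12\bigl(|h|+1\bigr) \in \bigl[1 - \tfrac12\eps,\; 1 + \tfrac12\eps\bigr],
\]
in the horizontal and vertical directions of $\hat\phi$ respectively. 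The local Lipschitz constant of $\ep_\phi$ is the larger value $s_v \le 1 + \tfrac12\eps \le 1 + 3d^{-2}$, which is (i). For the upper bounds I parameterize a leaf segment of $\hat\phi$ by $\phi$-arclength and integrate the relevant singular value: the $\H^3$-length of the image is at most $(1+\tfrac12\eps)\,L$ on a vertical leaf and at most $\tfrac12\eps\,L$ on a horizontal leaf, where $L$ is the $\phi$-length of the segment. Finally, because the tangent to a leaf of $\hat\phi$ makes angle at most $\theta = \tfrac12|\arg h| = O(\eps)$ with the corresponding $\phi$-direction, the leaf is monotone in that $\phi$-coordinate and $L \le \sec\theta \cdot d_\phi = (1 + O(d^{-4}))\,d_\phi$ between its endpoints. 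Combining gives $d_{\H^3}(\ep_\phi(x_1),\ep_\phi(x_2)) < C_0 d^{-2}\, d_\phi(x_1,x_2)$ for horizontal leaves, which is (iii), and $d_{\H^3}(\ep_\phi(y_1),\ep_\phi(y_2)) < (1 + C_0 d^{-2})\,d_\phi(y_1,y_2)$, the upper half of (ii).

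Only the lower bound in (ii) remains, and here the path-length estimate does not suffice: I must rule out the image curve doubling back in $\H^3$. This is exactly what Lemma \ref{lem:leafcurvature} provides, bounding the geodesic curvature of the image of a vertical leaf by $k < 12 d^{-2}$. A unit-speed curve in $\H^3$ with geodesic curvature bounded by $k < 1$ is a quasigeodesic; comparing it with a hypercycle of curvature $k$---the curve equidistant, at distance $\tanh^{-1}k$, from a geodesic $g$, whose orthogonal projection to $g$ contracts arclength by the factor $\sqrt{1-k^2}$---yields $d_{\H^3}(\text{endpoints}) \ge \sqrt{1-k^2}\,\ell' - O(k)$, where $\ell'$ is the image length. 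With $k = O(d^{-2})$ the multiplicative factor is $1 - O(d^{-4})$; to discard the additive $O(k)=O(d^{-2})$ term I split into cases. For a segment with $\ell' \ge 1$ the additive term is absorbed into $O(d^{-2})\,\ell'$, while for $\ell' < 1$ the total turning $k\ell' = O(d^{-2})$ is so small that a direct chord estimate (comparison with the Euclidean arc of a circle of radius $1/k$) gives $d_{\H^3}(\text{endpoints}) \ge (1 - O(d^{-4}))\,\ell'$. In both cases $d_{\H^3}(\ep_\phi(y_1),\ep_\phi(y_2)) \ge (1 - C_0 d^{-2})\,\ell'$, and since $\ell' \ge (1-\tfrac12\eps)\,L \ge (1-\tfrac12\eps)\,d_\phi(y_1,y_2)$ the lower bound in (ii) follows.

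I expect the quasigeodesic comparison of the previous paragraph to be the main obstacle. The curvature bound of Lemma \ref{lem:leafcurvature} controls only the instantaneous bending of the image curve, whereas (ii) demands a \emph{multiplicative} comparison between chordal distance and length that is uniform over vertical segments of arbitrary length; reconciling these forces the short/long dichotomy and a careful treatment of the additive error inherent in the hyperbolic quasigeodesic estimate. By contrast, part (i), part (iii), and the upper bound in (ii) follow directly from the singular-value bounds $s_h, s_v$ together with the near-geodesic behavior of the leaves of $\hat\phi$.
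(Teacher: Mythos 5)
Your proof is correct and follows essentially the same route as the paper: the upper bounds in (i)--(iii) come from the pointwise derivative estimates of Lemma \ref{lem:speedcurvature} combined with Lemma \ref{lem:boundingbeta} (you are in fact more explicit than the paper about converting $\hat{\phi}$-unit to $\phi$-unit vectors and about leaves of $\hat{\phi}$ not being exactly $\phi$-geodesics), and the lower bound in (ii) comes from the image-length estimate together with the curvature bound of Lemma \ref{lem:leafcurvature} and the bi-Lipschitz embedding property of low-curvature unit-speed paths in $\H^3$. The only divergence is your short/long dichotomy handling an additive $O(k)$ error: the paper sidesteps this by invoking the purely multiplicative form of that fact --- a unit-speed path of curvature at most $k<1$ is $1/\sqrt{1-k^2}$-bi-Lipschitz embedded \cite[App.~A]{leininger} --- so no additive term ever appears.
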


\begin{proof}
The proof will show that one can take $D_0 = 4$ and $C_0 = 28$.

Since vertical segments are geodesics in the $\hat{\phi}$-metric, the
upper bound from (ii) follows from (i).

We first consider upper bounds on distances.  We can integrate a bound
on the derivative of $\ep_\phi$ over a path to obtain an upper
bound on the length of the image, and thus on the distance between
endpoints.  Since $d > D_0 > 2 \sqrt{3}$ we can apply the derivative
estimates
from Lemma
\ref{lem:speedcurvature} and combining them with 
Lemma \ref{lem:boundingbeta} we obtain
$$
d_{\H^3}(\ep_\phi(z_1),
\ep_\phi(z_2)) < (\sqrt{2} + 6 d^{-2}) \: d_{\phi}(z_1,z_2)
$$
for any $z_1,z_2$ that are joined by a minimizing geodesic in $X
\setminus N_d(Z_\phi)$.  This implies (i) and, since vertical segments
are minimizing geodesics, the upper bound from (ii).   For a
horizontal segment $[x_1,x_2]$, these lemmas give
$$
d_{\H^3}(\ep_\phi(x_1),
\ep_\phi(x_2)) < 6 d^{-2} \: d_{\phi}(x_1,x_2)\
$$
and (iii) follows.

To complete the lower bound for case (ii), we note that the lower
bound on the derivative of $\ep_\phi$ in the vertical direction
from Lemma \ref{lem:speedcurvature} implies
$$ \length(\ep_\phi([y_1,y_2])) > (\sqrt{2} - 6 d^{-2}) d_\phi(y_1,y_2).$$
Recall that a path in $\H^3$ with curvature bounded above by $k < 1$
and parameterized by arc length is $1/\sqrt{1 - k^2}$-bi-Lipschitz
embedded (see e.g.~\cite[App.~A]{leininger}).  Since $d > D_0 = 4$,
Lemma \ref{lem:leafcurvature} implies that the image of a vertical
leaf segment has curvature $k < 15 d^{-2} < 1$.  Combining this with
the length estimate and using that $\sqrt{1 - k^2} \geq 1 - k$ for
$k < 1$, we obtain
\begin{equation*}
\begin{split}
 d_{\H^3}(\ep_\phi(y_1),\ep_\phi(y_2)) &> (1 - 15d^{-2})(\sqrt{2} - 6
d^{-2}) d_\phi(y_1,y_2)\\
&> (\sqrt{2} - 28 d^{-2}) d_\phi(y_1,y_2)
\end{split}
\end{equation*}
completing the proof of (ii).
\end{proof}

\subsection{Quasigeodesics}

Let $I$ denote a closed interval, half-line, or $\R$.  Recall that a
parameterized path $\gamma : I \to M$ in a metric space $M$ is a
$(K,C)$-quasigeodesic if for all $a,b \in I$ we have
$$
K^{-1} |b-a|  - C \leq d(\gamma(a),\gamma(b)) \leq K |b-a| + C.
$$

The following property of quasigeodesics in $\H^3$ is well-known (see
e.g.~\cite{kapovich:book} \cite[Sec~11.8]{ratcliffe}).

\begin{lem}
\label{lem:stability}
For all $K \geq 1$ and $C \geq 0$ there exists $L = L(K,C) \geq 0$ with the
following property: If $\gamma : I \to \H^3$ is a
$(K,C)$-quasigeodesic, and if $J$ is
the geodesic segment in $\H^3$ with the same endpoints as $\gamma(I)$,
then the Hausdorff distance between $J$ and $\gamma(I)$ is at most
$L(K,C)$. \flushright \qedsymbol
\end{lem}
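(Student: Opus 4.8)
This is the Morse lemma (stability of quasigeodesics), and the plan is to deduce it from the negative curvature of $\H^3$: being a $\CAT(-1)$ space, hyperbolic space is $\delta$-hyperbolic for a universal constant $\delta$, so its geodesic triangles are uniformly thin and distinct geodesics diverge exponentially. The first reduction is to \emph{tame} $\gamma$. Replacing it by the continuous path $\bar\gamma$ that concatenates the geodesic segments $[\gamma(n),\gamma(n+1)]$ for consecutive integers $n \in I$ (together with the two end segments reaching the endpoints of $I$) yields a continuous path whose image has Hausdorff distance at most $K + C$ from $\gamma(I)$ and which is again a $(K',C')$-quasigeodesic with $K',C'$ depending only on $K,C$. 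After reparametrizing $\bar\gamma$ by arclength it is $1$-Lipschitz, and it suffices to bound, in terms of $K',C'$ and $\delta$, the two directed Hausdorff distances between $\bar\gamma(I)$ and the geodesic $J$ with the same endpoints.

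For the inclusion $J \subset N_{r}(\bar\gamma(I))$ I would use the standard logarithmic neighborhood estimate in a $\delta$-hyperbolic space: every point of a geodesic lies within $\delta \log_2 \ell(c) + 1$ of the image of any rectifiable arc $c$ joining its endpoints, where $\ell(c)$ is the length of $c$. Applied directly to $\bar\gamma$ this gives a bound growing with the length of $J$, so to obtain constants independent of that length I would subdivide: cut the arclength domain into unit-length pieces, apply the estimate to each subarc and its geodesic chord to get each chord within a \emph{uniform} $r_0 = \delta + 1$ of $\bar\gamma(I)$, and then control the Hausdorff distance between the resulting broken geodesic and $J$ using thinness of the triangles formed at the breakpoints.

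For the reverse inclusion $\bar\gamma(I) \subset N_{D_1}(J)$ --- the bound on the excursion $D \define \sup_t d(\bar\gamma(t), J)$ --- I would use exponential divergence. Fixing a parameter $t_0$ achieving $D$ and the maximal subinterval on which $d(\bar\gamma(\param),J) \ge D/2$, the corresponding subarc avoids $N_{D/2}(J)$; the $(K',C')$-quasigeodesic inequality forces the feet of its endpoints on $J$ to be spread apart by an amount growing with $D$, while exponential divergence in $\CAT(-1)$ makes any such arc have length at least comparable to $e^{D/2}$. Since the arc is unit speed and quasigeodesic, its length is also at most linear in $D$; balancing the exponential lower bound against the linear upper bound forces $D \le D_1(K,C)$. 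Taking $L(K,C)$ to be the maximum of $r$, $D_1$, and the taming error $K + C$ finishes the case of a closed interval, and half-lines or all of $\R$ follow by exhausting $I$ with finite subintervals, the bound $L(K,C)$ being uniform.

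The step I expect to be the main obstacle is this excursion bound, and more precisely the uniformity of all constants in $K$ and $C$ alone. It is here that negative curvature is essential --- in a flat space quasigeodesics need not stay near geodesics at all --- so the argument must genuinely exploit exponential divergence rather than mere efficiency of the path; the delicate point is to verify that the lateral spread of the excursion forced by the quasigeodesic inequality is enough to trigger the exponential lower bound independently of the length of $J$. Because $\H^3$ is $\CAT(-1)$, this divergence is explicit by comparison with the curvature $-1$ plane, so the estimate can be made effective.
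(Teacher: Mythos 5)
The paper offers no proof of this lemma at all: it is stated as the standard stability (Morse) lemma for quasigeodesics in $\H^3$ and dispatched with citations to \cite{kapovich:book} and \cite{ratcliffe}. So the only question is whether your sketch would go through, and the step you yourself flag as the main obstacle is where it breaks. For the excursion bound you take the maximal subinterval $[a,b]$ on which $d(\bar\gamma(\cdot),J)\geq D/2$ and claim that ``the quasigeodesic inequality forces the feet of its endpoints on $J$ to be spread apart by an amount growing with $D$.'' It does not. Since the subarc must climb from height $D/2$ to height $D$ and back, its arclength satisfies $b-a\geq D$, whence $d(\bar\gamma(a),\bar\gamma(b))\geq D/K'-C'$; but both endpoints lie at distance $D/2$ from $J$, so the only estimate available for the feet is $d(p_a,p_b)\geq d(\bar\gamma(a),\bar\gamma(b))-D\geq D(1/K'-1)-C'$, which is vacuous for every $K'>1$. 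Two points of $\H^3$ at height $D/2$ over $J$ can be at mutual distance as large as $D$ while having the \emph{same} foot (take them diametrically opposite in the totally geodesic plane perpendicular to $J$), so nothing in the local data you use rules out an excursion whose endpoints project to essentially the same point of $J$ --- and for such an arc exponential divergence yields no length lower bound, so the ``exponential versus linear'' balancing never happens. Ruling out such excursions requires global input from the rest of the path; it cannot be done by analyzing the excursion subarc in isolation. Your first inclusion has a related problem: subdividing into unit pieces and invoking thinness ``at the breakpoints'' accumulates an error of order $\delta$ per breakpoint, i.e.\ a bound growing linearly with the length of $J$ rather than a uniform constant.

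Both defects are repaired by running the standard argument (Bridson--Haefliger III.H.1.7, or \cite{notes-on-notes}). First bound $\sup_{x\in J}d(x,\bar\gamma(I))$: let $x_0\in J$ realize this supremum $D_0$, pick $y,z\in J$ on either side of $x_0$ at distance $2D_0$ (or the endpoints), join them to points $y',z'$ of $\bar\gamma(I)$ within $D_0$, and apply your logarithmic estimate to the detour path $[y,y']\cup\bar\gamma|_{[\,\cdot\,,\,\cdot\,]}\cup[z',z]$, which avoids $B(x_0,D_0)$ and whose length is \emph{linear} in $D_0$ by the quasigeodesic inequality; this gives $D_0\leq\delta\log_2(\text{linear in }D_0)+1$ and hence a uniform bound --- this is the step where the quasigeodesic efficiency is genuinely played against negative curvature. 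Then the excursion bound follows with no further curvature input: on a maximal interval $(a,b)$ where $\bar\gamma$ leaves the $D_0$-neighborhood of $J$, connectedness of $J$ together with $J\subset N_{D_0}(\bar\gamma(I))$ produces a point of $J$ within $D_0$ of both $\bar\gamma(t_1)$ and $\bar\gamma(t_2)$ with $t_1\leq a\leq b\leq t_2$, forcing $b-a\leq K'(2D_0+C')$ and hence bounding how far $\bar\gamma([a,b])$ can wander. (Alternatively, the contraction property of nearest-point projection in $\H^3$ shows the projection of the excursion subarc to $J$ is \emph{short} --- the opposite of what you wanted --- which, fed into the lower quasigeodesic inequality, bounds $b-a$ directly.) The taming step and the reduction of rays and lines to finite subintervals in your sketch are fine.
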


The lemma applies to quasigeodesic rays and lines, where the
``endpoints'' of $\gamma(I)$ and $J$ are allowed to lie on the sphere
at infinity.

We will also want to recognize quasigeodesics using the local
criterion provided by the following lemma.

\begin{lem}
\label{lem:local-to-global}
For all $K\geq1$ and $C\geq0$ there exist $R(K,C) > 0$, $K'(K,C) \geq 1$, and
$C'(K,C) \geq 0$ with following property: If $\gamma : I \to \H^3$ is a
$(K,C)$-quasigeodesic when restricted to each interval of length
$R(K,C)$, then $\gamma$ is a $(K'(K,C)),C'(K,C))$-quasigeodesic
(globally).  Furthermore, these quantities can be chosen to satisfy
\begin{equation}
\label{eqn:kprimecprime}
\left .
\def\arraystretch{1.2}%
\begin{array}{ll}
K'(K,C) \to 1\\
C'(K,C) \to 0\\
R(K,C) \: \text{bounded}
\end{array}
\right \}
  \text{ as } (K,C) \to (1,0).
  \end{equation}
\end{lem}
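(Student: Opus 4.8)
The plan is to prove a Morse-type local-to-global principle, using the stability of quasigeodesics (Lemma \ref{lem:stability}) together with the fact that $\H^3$ is a $\CAT(-1)$ space, hence $\delta$-hyperbolic for some fixed $\delta$ and with convex distance-to-a-geodesic functions. Fix $K \ge 1$ and $C \ge 0$, let $L = L(K,C)$ be the stability constant from Lemma \ref{lem:stability}, and set $R = R(K,C)$ to be a quantity of the form $\max(R_0, c(L+\delta))$ for a fixed $R_0 > 0$ and an absolute constant $c$; the role of $R_0$ is to keep $R$ bounded below, while the boundedness of $L$ keeps $R$ bounded above. Since the restriction of a $(K,C)$-quasigeodesic to a subinterval is again a $(K,C)$-quasigeodesic, the local hypothesis applies to every interval of length at most $R$, and I would verify the two quasigeodesic inequalities separately.

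For the upper bound I would subdivide: given $a < b$, partition $[a,b]$ into $N = \lceil (b-a)/R\rceil$ consecutive intervals each of length at most $R$, apply the hypothesized local bound $d(\gamma(s),\gamma(t)) \le K|t-s| + C$ on each, and sum using the triangle inequality. This yields $d(\gamma(a),\gamma(b)) \le K(b-a) + NC \le (K + C/R)(b-a) + C$, which is the upper inequality with $K'_{\mathrm{up}} = K + C/R$ and $C'_{\mathrm{up}} = C$.

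The lower bound is the crux. I would first show that the arc $\gamma([a,b])$ stays within a bounded neighborhood $N_D(J)$ of the geodesic $J = [\gamma(a),\gamma(b)]$, with $D = D(K,C)$, by a farthest-point argument: if $\gamma(t_0)$ realizes the maximal distance $D$ to $J$, apply local stability to the window $\gamma([t_0 - R/2, t_0 + R/2])$ (truncated near the endpoints of $[a,b]$), whose endpoints lie at distance at least $R/K - C$ apart by the local lower bound. The thinness of the resulting geodesic triangles together with the exponential divergence of geodesics in $\H^3$ then forces $D$ to be bounded in terms of $L$ and $\delta$; this is the step where genuine hyperbolicity, and not merely the $\CAT(0)$ convexity of $d(\param, J)$, is needed, since it is precisely what prevents the arc from doubling back. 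Granting the neighborhood bound, I would finish by projecting $\gamma$ onto $J$: the nearest-point projection is $1$-Lipschitz in $\CAT(0)$, and the local lower bound prevents this projection from backtracking by more than a controlled amount, because a backtrack of length $B$ would produce two points of $\gamma$ that are close on $J$, hence close in $\H^3$, yet far apart in parameter, contradicting the local quasigeodesic inequality once $B$ is large. Coarse monotonicity of the projection then bounds $b-a$ in terms of $d(\gamma(a),\gamma(b))$, giving the lower inequality with constants controlled by $K, C, L, D$, and $\delta$.

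Finally, the asymptotic claim \eqref{eqn:kprimecprime} follows by tracking these dependences. The key observation is that $L(K,C) \to 0$ as $(K,C)\to(1,0)$, because a $(1,0)$-quasigeodesic is literally a geodesic and so coincides with the connecting geodesic; consequently $D(K,C) \to 0$ as well, $R$ stays bounded (tending to $c\delta$ but never dropping below $R_0$), and both $K' \to 1$ and $C' \to 0$. I expect the main obstacle to be the farthest-point estimate for the neighborhood bound: making it quantitatively explicit, handling the truncation of the centered window near the endpoints of $[a,b]$, and, most delicately for the refinement, ensuring that the resulting constant $D$ degrades to $0$ as $(K,C)\to(1,0)$ rather than merely remaining bounded.
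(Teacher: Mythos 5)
Your route is the classical Morse-lemma local-to-global argument (a bounded neighborhood of the chord, then nearest-point projection), which is genuinely different from the paper's. The paper cites the standard qualitative local-to-global principle for the mere existence of $R,K',C'$ and proves only the refinement \eqref{eqn:kprimecprime}, by replacing a $(1+\epsilon,\epsilon)$-local quasigeodesic with the piecewise-geodesic chain through the points $\gamma(k\epsilon^{1/8})$, showing the angle at each vertex exceeds $\pi-5\epsilon^{7/16}$, and invoking the Canary--Epstein--Green criterion that a chain with segments of length at least $s$ and angles at least $\pi-\delta$ satisfying $s\sin(s-\delta)>\delta$ is $\cos(s)$-bi-Lipschitz embedded. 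That argument needs no control whatsoever on the stability constant $L(K,C)$.

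The gap in your version sits exactly at the point you flag as delicate, and the justification you offer does not close it. You need $L(K,C)\to 0$ (hence $D\to 0$) as $(K,C)\to(1,0)$, and you argue this ``because a $(1,0)$-quasigeodesic is literally a geodesic.'' That identifies the value of $L$ at the limit point; it says nothing about $\lim_{(K,C)\to(1,0)}L(K,C)$, and Lemma \ref{lem:stability} as stated supplies no modulus of continuity. Indeed, in a general $\delta$-hyperbolic space the corresponding limit is a positive multiple of $\delta$, not $0$, so any proof must use more than thin triangles. The statement is true in $\H^3$, but proving it is the real content of the lemma: one needs an a priori bound $L\leq L_0$ from the usual Morse lemma, then at a farthest point $p=\gamma(t_0)$ a Gromov-product/CAT(0) comparison giving $d\bigl(p,[\gamma(t_0-T),\gamma(t_0+T)]\bigr)=O(T\sqrt{\epsilon})$, and finally the exponential contraction in $\H^3$ of a chord whose endpoints lie within $L_0$ of $J$ toward $J$ itself, choosing $T$ large and then $\epsilon$ small. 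None of this is in your sketch, and without it the chain $D\to0$, $K'\to1$, $C'\to0$ is unsupported. Secondarily, the projection step must be made lossless in the limit: the total backtracking of the projected path must be $o(1)\cdot(b-a)$ as $(K,C)\to(1,0)$, not merely bounded per window, or $K'$ will not tend to $1$; this is routine once $D\to0$ is in hand, but it is additional bookkeeping your outline elides.
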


Without the claim about limits of $K',C',R$, this lemma represents a
well-known property of quasigeodesics in $\H^3$ (and more generally,
in $\delta$-hyperbolic metric spaces).  Proofs can be found in
\cite[Sec.~7]{gromov:essays} \cite[Sec.~3.1]{cdp}.  We therefore
concern ourselves with the limiting behavior of $K',C',R$ as $(K,C)
\to (1,0)$.

\begin{proof}[Proof of \eqref{eqn:kprimecprime}.]
A $(K,C)$-quasigeodesic is also a
$(1+\epsilon,\epsilon)$-quasigeodesic for some $\epsilon$ that goes
to zero as $(K,C) \to (1,0)$.  We assume from now on that $\gamma : I
\to \H^3$ is a $(1+\epsilon,\epsilon)$-quasigeodesic on segments of
length $1$ (i.e.~we set $R = 1$).  We will compare $\gamma(I)$ to the
piecewise geodesic path formed by the images of regularly spaced
points in $I$; in order to obtain good estimates we will need for the
spacing of these points will be much larger than $\epsilon$, but to
still go to zero as $\epsilon \to 0$.

Consider the triangle in $\H^3$ formed by $a = \gamma(t)$, $b =
\gamma(t+\epsilon^{1/8})$, $c = \gamma(t+2\epsilon^{1/8})$ for some
$t$ such that $[t,t+2\epsilon^{1/8}] \subset I$.  Assume that
$2\epsilon^{1/8} < 1$, so the path $\gamma$ is a
$(1+\epsilon,\epsilon)$-quasigeodesic on $[t,t+2\epsilon^{1/8}]$ and
we have 
\begin{equation*}
\begin{split}
d(a,b), d(b,c) &\in [\epsilon^{\frac{1}{8}}(1+\epsilon)^{-1}-\epsilon,\:
\epsilon^{\frac{1}{8}}(1+\epsilon) +\epsilon],\\
d(a,c) &\in
[2\epsilon^{\frac{1}{8}}(1+\epsilon)^{-1}-\epsilon,
\:2\epsilon^{\frac{1}{8}}(1+\epsilon) + \epsilon].
\end{split}
\end{equation*}
For small $\epsilon$ it follows that $d(a,c) \approx d(a,b) + d(a,c)$
and the triangle is nearly degenerate; a calculation using the
hyperbolic law of cosines shows that such a hyperbolic triangle has interior angle
at $b$ satisfying $\theta > \pi - 5 \epsilon^{7/16}$.

Let $V = \{ k \in \epsilon^{1/8}\Z \: | \:(k \pm \epsilon^{1/8}) \in I
\}$, and consider the path in $\H^3$ obtained by joining successive
elements of $\gamma(V)$ by geodesic segments.  By the estimates above
this piecewise geodesic path has segments of length at least $s$ and
angles between adjacent segments greater than $\pi - \delta$, where
\begin{equation*}
\begin{split}
s &= \epsilon^{1/8}/2 < (\epsilon^{1/8}(1+\epsilon)^{-1} - \epsilon)\\
\delta &= 5 \epsilon^{7/16}
\end{split}
\end{equation*}
By \cite[Thm.~I.4.2.10]{notes-on-notes}, if we have $ s \sin(s -
\delta) > \delta $ then such a piecewise geodesic path is
$k$-bi-Lipschitz embedded for $k = \cos(s)$.  For the values given
above we find $s \sin(s - \delta) \sim \frac{1}{4} \epsilon^{1/4}$ as
$\epsilon \to 0$.  Comparing exponents (i.e.~$1/4 < 7/16$) we find
that the condition is satisfied for $\epsilon$ sufficiently small.
Thus the path is bi-Lipschitz embedded with
$$k = \cos(\epsilon^{1/8}/2).$$
Note that $k \to 1$ as $\epsilon \to 0$.

For any $p,q \in I$ there exist $p',q' \in V$ with $|p-p'|,|q-q'| <
2\epsilon^{1/8}$.  Using the $k$-Lipschitz property of $\gamma(V')$ and
the fact
that $\gamma$ is $(1+\epsilon,\epsilon)$-quasigeodesic on segments of
length $2 \epsilon^{1/8}$, we have
\begin{equation*}
 k^{-1}(|p-q|-4\epsilon^{\frac{1}{8}}) - 4\epsilon^{\frac{1}{8}}(1 +
\epsilon) - 2\epsilon \leq
d(\gamma(p), \gamma(q))
\leq \;\;  k(|p-q|+4\epsilon^{\frac{1}{8}}) +
4\epsilon^{\frac{1}{8}}(1 + \epsilon) + 2 \epsilon.
\end{equation*}
Thus we take $K' = k$ and $C' = 4\epsilon^{1/8}(k + 1+\epsilon) + 2
\epsilon$, and \eqref{eqn:kprimecprime} follows.
\end{proof}

\subsection{Height and distance}
So far our analysis of the Epstein-Schwarz map has focused on
leaves of the foliations of the quadratic differential $\hat{\phi}$,
which is the sum of the Schwarzian of the projective structure
(i.e.~$\phi$) and a correction term ($-\flat$).  We will now use
Theorem \ref{thm:maingeom}, Lemma \ref{lem:boundingbeta}, and the
quasigeodesic estimates of the previous section to study the
restriction of $\ep_\phi$ to a geodesic of the $\phi$-metric.

The following theorem shows that the height of a $\phi$-geodesic
segment provides a good estimate for the distance between the
endpoints of its image by $\ep_\phi$, as long as the segment is far
from $Z_\phi$.

\begin{thm}
\label{thm:height-estimate}
There exists $M>0$ and decreasing functions $K'(m) > 1$, $C'(m) > 0$
defined for $m > M$ with the following property: Let $\phi \in Q(X)$
and let $J = [x_0,x_1]$ be a nonsingular and non-horizontal
$\phi$-geodesic segment in $\Tilde{X}$ with height $h$ and length $L$.
If $d = d_\phi(J,Z_\phi) > m(1 + \sqrt{L})$ for some $m \geq M$, then
  \begin{equation}
  \label{eqn:distance-estimate}
K'(m)^{-1} \sqrt{2} \, h - C'(m) < d_{\H^3}(\ep_\phi(x_0),\ep_\phi(x_1)) <
  K'(m) \sqrt{2} \, h + C'(m).
  \end{equation}
  Furthermore, we have $(K'(m),C'(m)) \to (1,0)$ as $m \to \infty$.
\end{thm}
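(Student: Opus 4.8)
The plan is to decompose the nonsingular $\phi$-geodesic segment $J$ into pieces that alternate (or at least are controlled) between nearly-vertical and nearly-horizontal behavior relative to $\hat\phi$, apply the collapsing estimates of Theorem \ref{thm:maingeom} to each piece, and then recognize the image $\ep_\phi(J)$ as a quasigeodesic in $\H^3$ whose quasigeodesic constants degrade to $(1,0)$ as the distance parameter $m \to \infty$. The key point is that Theorem \ref{thm:maingeom} tells us that $\ep_\phi$ stretches the vertical direction of $\hat\phi$ by a factor near $1$ while collapsing the horizontal direction by a factor $O(d^{-2})$; since the \emph{height} of a segment is precisely its total vertical measure, the height should control the image length up to the collapsing error from the horizontal part and up to the discrepancy between the foliations of $\phi$ and of $\hat\phi$.

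First I would use Lemma \ref{lem:boundingbeta} together with the hypothesis $d > m(1+\sqrt L)$ to guarantee that $\eps(x) = |\flat/\phi| \leq 6/d^2$ is uniformly small along $J$, so that the $\phi$-foliation and the $\hat\phi$-foliation are close and the estimates of Lemma \ref{lem:speedcurvature} and Theorem \ref{thm:maingeom} all apply with errors governed by $d^{-2} \leq m^{-2}(1+\sqrt L)^{-2}$. Next I would partition $J$ into short subsegments, each short enough (length comparable to $\sqrt d$ or to a fixed small constant) that the difference between the $\phi$-geodesic direction and the $\hat\phi$-foliation directions contributes only a controlled error. On each subsegment, writing its $\hat\phi$-holonomy as $w + ih$ (width plus height), I would combine the vertical stretching estimate from Theorem \ref{thm:maingeom}(ii) with the horizontal collapsing estimate from Theorem \ref{thm:maingeom}(iii) to show that the image has length within a factor $(1 \pm C_0 d^{-2})$ of the subsegment's height plus an additive error of size $O(d^{-2})$ times its width. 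The factor $(1 + \sqrt L)$ in the hypothesis on $d$ is the device that lets the accumulated additive errors over $\sim L/\sqrt d$ subsegments stay small: summing errors of size $O(d^{-2}) \cdot (\text{length})$ over the whole segment yields a total error of order $L\,d^{-2} \lesssim L / (m^2 L) = m^{-2}$, which is bounded independently of $L$.

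The main obstacle — and where the quasigeodesic machinery of Lemma \ref{lem:local-to-global} becomes essential — is passing from these \emph{local} (per-subsegment) comparisons to a \emph{global} distance estimate for the endpoints $\ep_\phi(x_0), \ep_\phi(x_1)$. The image $\ep_\phi(J)$ is only piecewise nearly-geodesic; a priori it could fold back on itself so that the endpoint distance is much smaller than the total image length. To rule this out I would show that on each subinterval of some fixed length $R$ the curve $\ep_\phi(J)$ is a $(K,C)$-quasigeodesic with $(K,C) \to (1,0)$ as $m \to \infty$ — here I would use the curvature bound from Lemma \ref{lem:leafcurvature} (controlling how much the vertical-leaf images bend) together with the collapsing of horizontal displacement to bound the local backtracking — and then invoke Lemma \ref{lem:local-to-global}, whose conclusion \eqref{eqn:kprimecprime} is exactly tailored to promote these to global quasigeodesic constants $(K', C')$ tending to $(1,0)$.

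Finally, having established that $\ep_\phi(J)$ is a global $(K'(m), C'(m))$-quasigeodesic whose arc length is comparable to the height $h$ (with multiplicative error $1 + O(m^{-2})$ and bounded additive error), the desired two-sided bound \eqref{eqn:distance-estimate} follows directly from the definition of a quasigeodesic, with $K'(m)$ and $C'(m)$ inheriting the limiting behavior $(K'(m), C'(m)) \to (1,0)$ as $m \to \infty$. The only remaining care is to confirm that the total arc length of $\ep_\phi(J)$ is indeed comparable to $h$ rather than to $L$: this is where the distinction between height and length matters, and it comes down to checking that the horizontal part of $J$, though possibly of large $\phi$-length, contributes only collapsed (hence negligible) displacement in $\H^3$ by Theorem \ref{thm:maingeom}(iii), so that it is the accumulated \emph{vertical} displacement — namely $h$ — that governs the endpoint distance.
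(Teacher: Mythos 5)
Your proposal follows essentially the same route as the paper: control $|\flat/\phi|$ via Lemma \ref{lem:boundingbeta}, decompose $J$ into short subsegments, compare each to a nearby $\hat{\phi}$-geodesic and estimate the endpoint displacement by applying the vertical and horizontal cases of Theorem \ref{thm:maingeom} to a vertical--horizontal decomposition of that subsegment, deduce that the height parameterization is locally quasigeodesic with constants tending to $(1,0)$, and globalize with Lemma \ref{lem:local-to-global}. The only cosmetic difference is that the paper makes the $\phi$-versus-$\hat{\phi}$ comparison precise through Lemma \ref{lem:phipsi} and an explicit right-triangle construction inside a $d/2$-neighborhood of $J$, and it does not need to re-invoke Lemma \ref{lem:leafcurvature} (that curvature bound is already absorbed into Theorem \ref{thm:maingeom}(ii)).
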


Because of this constant factor of $\sqrt{2}$ in the estimate above,
it will be helpful in the proof and subsequent discussion to sometimes
use the following terminology: Suppose $J$ is a nonsingular
$\phi$-geodesic segment.  We say that a parameterization $J(t)$ of
this segment is a \emph{parameterization by $2\phi$-height} if the
$\phi$-height of $J([t_1,t_2])$ is equal to
$\tfrac{1}{\sqrt{2}} |t_2-t_1|$ for all $t_1,t_2$ in the domain.  This
is equivalent to the condition that the height of $J([t_1,t_2])$ with
respect to the quadratic differential $2 \phi$ is $|t_2-t_1|$.  Of
course any non-horizontal segment has a parameterization by
$2\phi$-height.  Theorem \ref{thm:height-estimate} shows that under
the stated hypotheses, the Epstein-Schwarz map $\ep_\phi$ sends a
segment parameterized by $2\phi$-height to a path in $\H^3$ that is
$(K,C)$-quasi-geodesic with $K \approx 1$ and $C \approx 0$ when $m$
is large.

\begin{proof}
We will make several assumptions of the form $m > c$, where $c$ is a
constant.  At the end we take $M$ to be the supremum of these
constants.

Let $U$ denote the $d/2$-neighborhood of $J$ with respect to the
$\phi$-metric, so $d_\phi(U,Z_\phi) = d/2 > m/2$. Define
$$ \delta = \sup_U \frac{|\hat{\phi} - \phi|}{|\phi|} = \sup_U
\frac{|\flat|}{|\phi|}.$$
Using the bound on $|\flat|/|\phi|$ from Lemma \ref{lem:boundingbeta}
and $d > m$ we obtain
$$ \delta < \frac{24}{m^2}.$$
and similarly, using $d > m \sqrt{L}$, we have
$$ 4 \delta L < \frac{96 L}{d^2} < \frac{96}{m^2}.$$
We now assume $m > 16$ which by the above estimates is more than
sufficient to ensure $\delta < 1/4$ and $d_\phi(J, \partial U) = d/2 > 4
\delta L$, so Lemma \ref{lem:phipsi} applies to $U$ and any subsegment
$J_1$ of $J$.  In particular $U$ contains a nonsingular
$\hat{\phi}$-geodesic segment $\hat{J}_1$ with the same endpoints as
$J_1$ and which satisfies
\begin{equation*}
\begin{split} 
&d_{\phi}(\hat{J}_1,\partial U) >
\frac{d}{8}, \;\; \text{and}\\
&\max(|\hat{h_1}-h_1|,|\hat{w_1}-w_1|) < \delta L < \frac{24}{d^2} < 1,
\end{split}
\end{equation*}
where $h_1,w_1$ are the $\phi$-height and width of $J_1$, and and
$\hat{h}_1,\hat{w}_1$ are the $\hat{\phi}$-height and width of
$\hat{J}_1$.

Now suppose that the subsegment $J_1$ has height at most $d/16$.  Then
$\hat{J}_1$ has height bounded by $1 + d/16 < d/8 <
d_\phi(\hat{J}_1,\partial U)$ and there are
$\hat{\phi}$-vertical and horizontal geodesic segments contained in $U$
that together with $\hat{J}_1$ form a right triangle $\hat{T}$ that
lies in $U$.  Taking $m > 2 D_0$ we can apply Theorem \ref{thm:maingeom} to the
vertical and
horizontal sides of $\hat{T}$ in order to estimate the distance
between the $\ep_{\phi}$-images of the endpoints $\{y_0,y_1\}$ of $J_1$,
obtaining
\begin{equation*}
\begin{split}
\left ( \sqrt{2} - \frac{4C_0}{d^2} \right ) \hat{h}_1 - \frac{4C_0}{d^2}
\hat{w_1}  \;\;\;\; &\leq\\ d_{\H^3}(\ep_\phi(y_0)&,\ep_\phi(y_1))\\
&\leq \;\;\;\; \left ( \sqrt{2} + \frac{4C_0}{d^2} \right ) \hat{h}_1 +
\frac{4C_0}{d^2}
\hat{w_1}
\end{split}
\end{equation*}
Using $|\hat{h_1}-h_1| < 24/d^2$ and $\hat{w}_1 < L + 24/d^2$, and the
fact that these estimates can be applied to any subsegment of $J_1$,
we find that $\ep_\phi$ maps the parameterization of $J_1$ by
$2\phi$-height to a $(K,C)$-quasigeodesic path in $\H^3$ with
\begin{equation}
\label{eqn:kc-for-height}
K = \left ( 1 - \frac{4C_0}{\sqrt{2} \, d^2}\right )^{-1}, \;\;\;\;
C =  \frac{ 4 C_0 (L+ 48d^{-2}) + 24\sqrt{2}}{d^2}.
\end{equation}
From these expressions it is clear that for $m$ large enough, the
assumptions $d>m$ and $d > m \sqrt{L}$ give upper bounds for $K,C$,
and that these decrease toward $1,0$, respectively, as $m \to \infty$.
Since this quasigeodesic property holds on each subsegment of $J$
whose height is at most $d/16 > m/16$, by
taking $m$ large enough we can apply Lemma \ref{lem:local-to-global},
and the parameterization of $J$ by $2\phi$-height is mapped by $\ep_\phi$ to a
$(K'(m),C'(m))$-quasigeodesic path in $\H^3$, where $K'(m) \to 1$ and
$C'(m) \to 0$ as $m \to \infty$.  The estimate
\eqref{eqn:distance-estimate} for
$d_{\H^3}(\ep_\phi(x_0),\ep_\phi(x_1))$ follows.
\end{proof}

For horizontal segments, the above proof applies up to
\eqref{eqn:kc-for-height}, and we can take $J_1 = J$ since the
condition that $h_1 < d/16$ is vacuous.  We conclude:

\begin{cor}[of proof]
\label{cor:horizontal-height-estimate}
There exist $M>0$ and a decreasing function $C'(m) > 0$
defined for $m > M$ with the following property: Let $\phi \in Q(X)$
and let $J = [x_0,x_1]$ be a nonsingular horizontal
$\phi$-geodesic segment in $\Tilde{X}$ with length $L$.
If $d = d_\phi(J,Z_\phi) > m(1 + \sqrt{L})$ for some $m \geq M$, then
\begin{equation}
  \label{eqn:horizontal-distance-estimate}
  \length(\ep_{\phi}(J)) \leq C'(m)
  \end{equation}
  Furthermore, we have $C'(m) \to 0$ as $m \to \infty$. \flushright
\qedsymbol
\end{cor}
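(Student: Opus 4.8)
The plan is to estimate $\length(\ep_\phi(J))$ directly, by integrating along $J$ the norm $\|\ep_*(T)\|$ of the image of the unit tangent vector $T$, and to show that this norm is of order $\eps$ because a horizontal $\phi$-geodesic points in a nearly horizontal direction for $\hat{\phi}$, precisely where $\ep_\phi$ collapses. First I would record that on $J$ the hypothesis $d = d_\phi(J,Z_\phi) > m(1+\sqrt{L}) \geq m$ together with Lemma~\ref{lem:boundingbeta} gives $\eps(x) \leq 6/d(x)^2 \leq 6/d^2$ for every $x \in J$, so that for $M$ sufficiently large (say $M>4$) we have $\eps(x) < \tfrac34$ and Lemma~\ref{lem:speedcurvature}(iii) is available at every point of $J$.

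The main step is a pointwise bound on $\|\ep_*(T)\|$. Since $J$ is horizontal for $\phi$, the vector $T$ is the unit horizontal $\phi$-direction, which I would compare to the horizontal $\hat{\phi}$-direction. From $\hat{\phi} = \phi(1-\flat/\phi)$ with $|\flat/\phi| = \eps(x) < 1$, the quotient $\hat{\phi}/\phi$ lies in the disk of radius $\eps(x)$ about $1$, so $|\arg(\hat{\phi}/\phi)| \leq \arcsin\eps(x)$; as horizontal directions turn by half the argument of the differential, the angle $\theta$ between the horizontal $\phi$- and $\hat{\phi}$-directions satisfies $\theta \leq \tfrac12\arcsin\eps(x)$. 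Expanding $T = a\,\xi_h + b\,\xi_v$ in the $|\hat{\phi}|$-orthonormal frame of horizontal and vertical $\hat{\phi}$-vectors, and noting $|T|_{\hat{\phi}} = |\hat{\phi}/\phi|^{1/2} \leq (1+\eps)^{1/2}$, I get $|a| \leq (1+\eps)^{1/2}$ and $|b| \leq (1+\eps)^{1/2}\sin\theta$. Combining this with the estimates $\|\ep_*(\xi_h)\| < \eps(x)$ and $\|\ep_*(\xi_v)\| < 1+\eps(x)$ of Lemma~\ref{lem:speedcurvature}(iii) via the triangle inequality yields $\|\ep_*(T)\| \leq C_1\,\eps(x)$ for a universal constant $C_1$ (one checks $C_1 = 3$ suffices for $\eps(x) < \tfrac34$).

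Integrating this pointwise bound over $J$ then finishes the proof. With $\eps(x) \leq 6/d^2$ along $J$ and $\phi$-length $L$,
\begin{equation*}
\length(\ep_\phi(J)) = \int_J \|\ep_*(T)\|\,ds \leq C_1\,\frac{6}{d^2}\,L = \frac{6C_1 L}{d^2} < \frac{6C_1}{m^2},
\end{equation*}
the last step using $d > m\sqrt{L}$, i.e.\ $L/d^2 < 1/m^2$. Hence $C'(m) = 6C_1/m^2$ is decreasing with $C'(m) \to 0$, and taking $M$ above the finitely many thresholds used above completes the argument. This is precisely where the horizontal case parts ways with the proof of Theorem~\ref{thm:height-estimate}: there, with $J_1 = J$ (the height condition being vacuous), the triangle estimate \eqref{eqn:kc-for-height} controls only the \emph{endpoint} distance $d_{\H^3}(\ep_\phi(x_0),\ep_\phi(x_1))$, whereas the corollary asserts the stronger bound on total arc length, so the direction-sensitive derivative estimate of Lemma~\ref{lem:speedcurvature} must replace the coarser quasigeodesic machinery.

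The step I expect to require the most care is the angle comparison and the ensuing bookkeeping. One must check that the vertical $\hat{\phi}$-component $b\,\xi_v$, whose image under $\ep_*$ has near-unit length, nevertheless contributes only $O(\eps)$ because its coefficient $|b| = O(\theta) = O(\eps)$, while the horizontal $\hat{\phi}$-component contributes $O(\eps)$ because $\|\ep_*(\xi_h)\| < \eps$. Ensuring that both contributions remain of order $\eps$ — so that the would-be $O(1)$ vertical term is suppressed by the small angle $\theta$ — is the delicate point, but it follows directly from the quantitative bounds already recorded in Lemmas~\ref{lem:speedcurvature} and~\ref{lem:boundingbeta}.
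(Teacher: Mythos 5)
Your proof is correct, but it takes a genuinely different route from the paper's. The paper obtains the corollary by literally rerunning the proof of Theorem \ref{thm:height-estimate} up to \eqref{eqn:kc-for-height} with $J_1 = J$ (the height condition being vacuous): it invokes Lemma \ref{lem:phipsi} to replace the $\phi$-horizontal segment by a nearby $\hat{\phi}$-geodesic with the same endpoints, decomposes that into a right triangle of $\hat{\phi}$-horizontal and $\hat{\phi}$-vertical sides, and applies Theorem \ref{thm:maingeom} to each side; since $h=0$ the resulting quasigeodesic estimate collapses to $d_{\H^3}(\ep_\phi(x_0),\ep_\phi(x_1)) \leq C = O(1/m^2)$. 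You instead work pointwise on $J$ itself: you quantify the angle between the $\phi$-horizontal and $\hat{\phi}$-horizontal directions (correctly, as half the argument of $\hat{\phi}/\phi$, hence $O(\eps)$), expand the unit tangent in the $\hat{\phi}$-principal frame, apply Lemma \ref{lem:speedcurvature}(iii), and integrate. What your approach buys is that it proves the bound on $\length(\ep_\phi(J))$ exactly as stated --- the paper's route as written controls only the endpoint distance (or, applied to subsegments, the diameter of the image), which is all that is actually used later in Theorem \ref{thm:phin-horizontal-distance}, but is a priori weaker than an arclength bound. What the paper's route buys is uniformity of exposition: no new angle computation is needed, only the already-assembled machinery of Lemma \ref{lem:phipsi} and Theorem \ref{thm:maingeom}. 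Your quantitative bookkeeping checks out: $\eps \leq 6/d^2$ on $J$, the vertical coefficient is $O(\eps)$ so the near-unit vertical stretch is suppressed, and $d > m\sqrt{L}$ converts $L/d^2$ into $1/m^2$, giving a decreasing $C'(m) \to 0$.
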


\subsection{Examples}

Because the construction of the Epstein-Schwarz map associated to a
quadratic differential is purely local, we can consider the behavior
for some simple differentials on the complex plane to illustrate the
geometric properties studied in Theorems \ref{thm:maingeom} and
\ref{thm:height-estimate}.

\begin{example}
\label{ex:dzsquared}
Consider the quadratic differential $\phi = dz^2$ on $\C$.  Note that
$|\phi|^{1/2} = |dz|^{1/2}$ is M\"obius flat, so $\flat=0$, $\hat{\phi} = \phi$,
and Theorem \ref{thm:maingeom} applies to the trajectories of $\phi$.

The covering map $f : \C \to \C^*$ given by $f(z) = \exp(i \sqrt{2}
z)$ satisfies $S(f) = \phi$, so we can use this as a model for the
associated developing map.  The metric $\sqrt{2} |\phi|^{1/2}$ on $\C$
pushes forward to the metric $|dz|/|z|$ on $\C^*$.  In the standard
unit ball model of $\H^3$, this metric agrees with the spherical
metric on the equator, so the image of the equator by the Epstein map
is the origin.  Invariance of $|dz|/|z|$ under the action of $\R^+$ by
dilation then shows that the full Epstein map of this metric on $\C^*$ is
the orthogonal projection of $\partial_\infty \H^3$ onto the geodesic
$g_{0,\infty}$ joining the ideal points $0,\infty$.

Therefore the Epstein-Schwarz map $\ep_\phi : \C \to \H^3$ is the
composition of $f$ with this projection, or equivalently, $\ep_\phi(z)
= g(\Im(z))$ where $g(t)$ is an arc length parameterization of
$g_{0,\infty}$.  We see the behavior predicted by letting $d \to
\infty$ in the estimates of Theorem \ref{thm:maingeom}, reflecting the
fact that this quadratic differential is complete and has no zeros:
Each vertical trajectory (i.e.~each vertical line in $\C$) maps to a
geodesic in $\H^3$ parameterized by arc length, while each horizontal
trajectory is collapsed to a point.
\end{example}

\begin{figure}
\includegraphics[width=\textwidth]{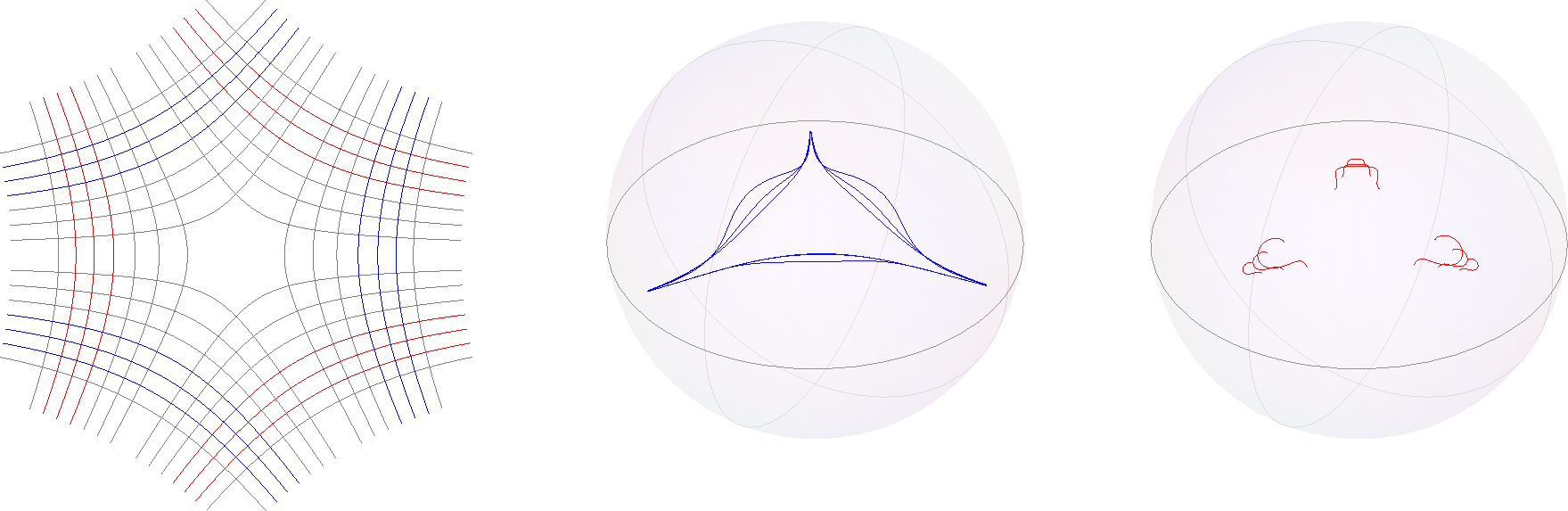}
\caption{Left to right: Vertical and horizontal trajectories of $z
  dz^2$; the images of vertical trajectories under the
  Epstein-Schwarz map approximate an ideal triangle, as shown here in the
  unit ball model of $\H^3$; segments on horizontal trajectories are
  contracted to sets of small diameter.\label{fig:epstein1}}
\end{figure}

\begin{figure}
\includegraphics[width=0.4\textwidth]{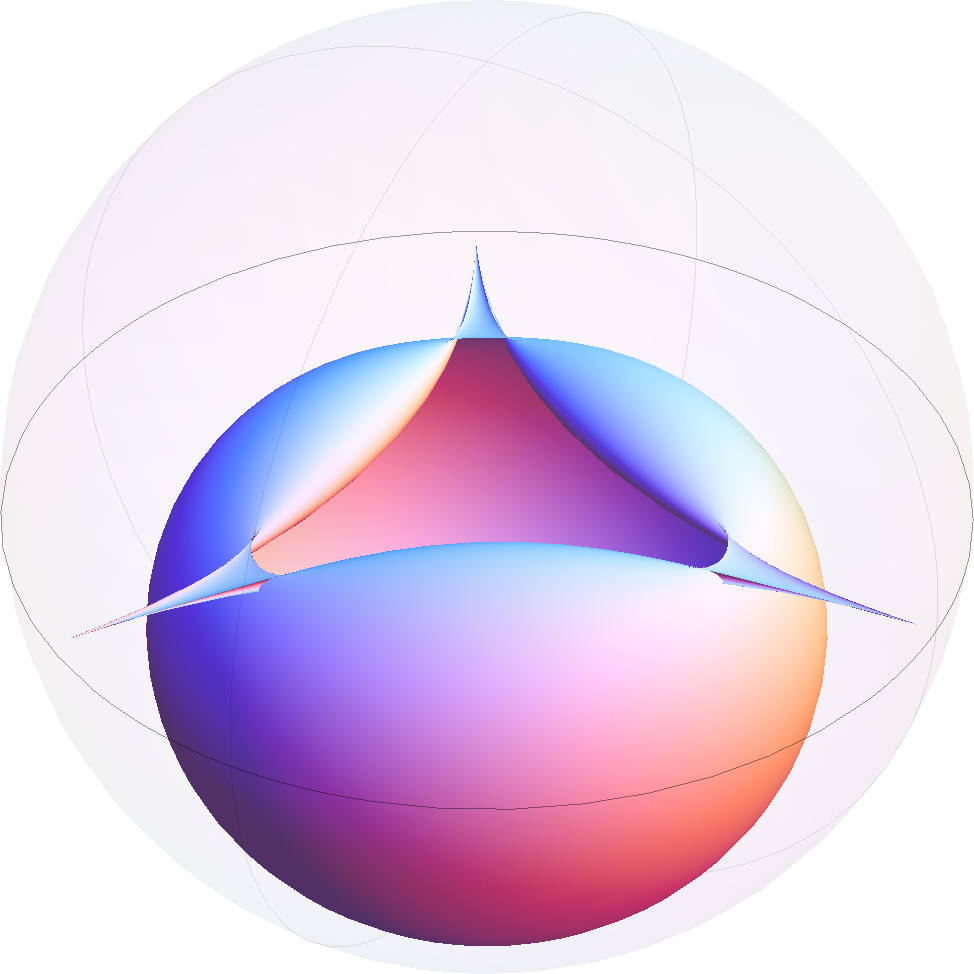}
\caption{The image of a small neighborhood of the origin under the
  Epstein-Schwarz map of $z dz^2$ in the unit ball model of $\H^3$.
  The ideal point $(0,0,-1)$ corresponds to the image of $0$ under
  the developing map.\label{fig:epstein2}}
\end{figure}

\begin{example}
Next we consider $\phi = z dz^2$ on $\C$.  While in this case it is
possible to find closed-form expressions for the developing map (in
terms of Airy functions) and for the Epstein-Schwarz map, we will
only discuss the qualitative features seen in Figures
\ref{fig:epstein1}--\ref{fig:epstein2}.  Here the origin is a simple
zero of $\phi$ which corresponds to a cone point of angle $3 \pi$ for
the $\phi$-metric.  Centered at the origin we can construct a regular
right-angled geodesic hexagon of alternating vertical and horizontal
sides.  By Theorem \ref{thm:height-estimate}, if this hexagon is far
enough from the origin then the Epstein-Schwarz map sends its
vertical sides to long near-geodesic segments in $\H^3$,
while the horizontal sides are mapped to sets of small diameter.
Thus the image of the hexagon itself approximates an ideal triangle.
Note that avoiding a small neighborhood of the origin also ensures
that the trajectories of $\phi = z dz^2$ are close to those of
$\hat{\phi} = (z + \frac{5}{8z^2}) dz^2$, so the images of these
curves approximate lines of curvature on the Epstein surface.

Near the origin (i.e.~for small $d$) the behavior of the
Epstein-Schwarz surface is quite different.  A small punctured
neighborhood of $0$ maps to the ``bubble'' shown in Figure
\ref{fig:epstein2}---a properly embedded, infinite area surface whose
induced metric is approximately isometric to
$|\hat{\phi}|/|\phi|^{1/2} \sim |z|^{-5/2} |dz|$ (by Lemma
\ref{lem:firstsecond}).  The corresponding surface in $\H^3$
approaches the developed image of $0$ tangentially, eventually leaving
every horoball based at that point.
\end{example}

\section{Sequences of Epstein-Schwarz maps}
\label{sec:sequences}

In the previous section we considered the geometry of the
Epstein-Schwarz map for a single complex projective structure on a
surface.  We now analyze how these results apply to a divergent
sequence of projective structures whose associated quadratic
differentials converge projectively.  Specifically, throughout this
section we assume:
\begin{equation}
\label{eqn:assumptions}
\begin{cases}
(f_n,\rho_n) \text{ is a sequence of projective structures on } X\\
\phi_n \in Q(X) \text{ is the associated sequence of Schwarzian
derivatives}\\
\phi_n \to \infty \text{ as } n \to \infty\\
\lim_{n \to \infty} \frac{\phi_n}{\|\phi_n\|} = \phi
\end{cases}
\end{equation}
The theme we develop is that the foliation and transverse measure of
the projective limit $\phi$ governs the large-scale geometry of
$\ep_{\phi_n}$ for large $n$.

\subsection{Nonsingular segments}

Let $\Tilde{\phi}$ and $\Tilde{\phi}_n$ denote the lifts of $\phi$ and
$\phi_n$ to $\Tilde{X}$, and let $\Tilde{Z}_\phi$ denote the set of
zeros of $\Tilde{\phi}$.  By compactness of $X$ and the convergence of
$\phi_n/\|\phi_n\|$ we have
\begin{equation}
\label{eqn:uniform-convergence}
 \frac{|\Tilde{\phi_n} - \Tilde{\phi} |}{|\Tilde{\phi}|} \to 0
\end{equation}
uniformly on compact subsets of $\Tilde{X} - \Tilde{Z}_\phi$.  

\begin{thm}
\label{thm:phin-distance}
Let $I \subset \Tilde{X}$ denote a nonsingular and non-horizontal
$\phi$-geodesic segment.  
Then there exists $N > 0$ and sequences $K_n \to 1$ and $C_n \to 0$ as
$n \to \infty$ such that for each $n > N$ and any $x_0,x_1 \in I$ with
$\phi$-height difference $h$,
we have
\begin{equation}
\label{eqn:phin-distance-goal}
 K_n^{-1} \|2\phi_n\|^{1/2} h - C_n \leq
 d_{\H^3}(\ep_{\phi_n}(x_0),\ep_{\phi_n}(x_1)) \leq 
K_n \|2\phi_n\|^{1/2} h + C_n.
\end{equation}
The constants $N,K_n,C_n$ can be taken to depend only on $\phi_n$ and
$d_{\phi}(I,Z_\phi)$.  Furthermore, the same estimate holds
for any non-horizontal half- or bi-infinite geodesic $I$ with the
property that $d_{\phi}(J,Z_\phi) > 0$.
\end{thm}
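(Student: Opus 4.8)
The plan is to reduce to the single-differential height estimate of Theorem~\ref{thm:height-estimate} applied to each $\phi_n$ separately, after transporting the geometry of the fixed $\phi$-geodesic $I$ into the $\phi_n$-metric by a rescaling. Write $d_0 = d_\phi(I,Z_\phi) > 0$, let $\psi_n = \phi_n/\|\phi_n\|$ be the normalized differentials, and let $U$ be the $\phi$-convex $(d_0/2)$-neighborhood of $I$, which contains no zeros of $\phi$. Since $I$ is a straight non-horizontal segment it meets the horizontal foliation of $\phi$ at a constant angle $\theta \in (0,\pi)$, so a subsegment of $\phi$-length $L_0$ has $\phi$-height $h = L_0\sin\theta$. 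By \eqref{eqn:uniform-convergence} and compactness of $X$ the quantity $\delta_n := \sup_U |\psi_n - \phi|/|\phi|$ tends to $0$, and I restrict to $n$ so large that $\delta_n < \min(1/4,\ d_0/(8L))$, where $L$ is the $\phi$-length of $I$.

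First I would transport a subsegment $J = [x_0,x_1] \subset I$, of $\phi$-length $L_0 \leq L$ and $\phi$-height $h$, into the $\psi_n$-metric. Applying Lemma~\ref{lem:phipsi} to the pair $(\phi,\psi_n)$ on $U$ yields a nonsingular $\psi_n$-geodesic $\hat{J}_n \subset U$ with the same endpoints, whose $\psi_n$-height $\hat{h}_n$ obeys $|\hat{h}_n - h| < \delta_n L_0$; moreover, since $\psi_n$ has no zeros in $U$ and $d_\phi(J,\partial U) = d_0/2 > 4\delta_n L_0$, part~(iv) of the lemma gives $d_{\psi_n}(\hat{J}_n, Z_{\psi_n}) \geq d_{\psi_n}(\hat{J}_n,\partial U) > d_0/8$. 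Because geodesics and the zero set are unchanged by the constant rescaling $\phi_n = \|\phi_n\|\psi_n$, the curve $\hat{J}_n$ is also a $\phi_n$-geodesic, and every $\psi_n$-length scales by the factor $\|\phi_n\|^{1/2}$; in particular $\hat{J}_n$ has $\phi_n$-height $\|\phi_n\|^{1/2}\hat{h}_n$, $\phi_n$-length at most $2\|\phi_n\|^{1/2}L$, and $d_{\phi_n}(\hat{J}_n, Z_{\phi_n}) > \|\phi_n\|^{1/2}d_0/8$.

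Next I apply Theorem~\ref{thm:height-estimate} to $\phi_n$ and $\hat{J}_n$. The key quantitative point is that the scale parameter $m$ of that theorem may be sent to infinity along the sequence: its hypothesis $d_{\phi_n}(\hat{J}_n, Z_{\phi_n}) > m(1 + \sqrt{L^{(n)}})$ is satisfied, with room to spare, by $m = m_n \sim c\,\|\phi_n\|^{1/4}$ for a constant $c$ depending only on $d_0$ and $L$, because the distance to the zeros grows like $\|\phi_n\|^{1/2}$ while $\sqrt{L^{(n)}}$ grows only like $\|\phi_n\|^{1/4}$. For $n$ large this produces
\begin{equation*}
K'(m_n)^{-1}\|\phi_n\|^{1/2}\hat{h}_n - C'(m_n) \leq d_{\H^3}\bigl(\ep_{\phi_n}(x_0),\ep_{\phi_n}(x_1)\bigr) \leq K'(m_n)\|\phi_n\|^{1/2}\hat{h}_n + C'(m_n),
\end{equation*}
with $K'(m_n) \to 1$ and $C'(m_n) \to 0$.

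The hard part is the final conversion of the $\psi_n$-height $\hat{h}_n$ into the target quantity $\|\phi_n\|^{1/2}h$. The naive bound $\|\phi_n\|^{1/2}|\hat{h}_n - h| < \|\phi_n\|^{1/2}\delta_n L_0$ is useless as an \emph{additive} error, since $\|\phi_n\|^{1/2}\delta_n$ need not vanish. The resolution---and the crux of the whole argument---is that $I$ is straight with fixed direction, so $L_0 = h/\sin\theta$ and the error is in fact \emph{proportional to $h$}: one has $|\hat{h}_n - h| < (\delta_n/\sin\theta)h$ uniformly over all subsegments, whence $\|\phi_n\|^{1/2}\hat{h}_n = (1 + O(\delta_n))\|\phi_n\|^{1/2}h$. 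This multiplicative error belongs to the leading constant rather than the additive one, so setting $K_n = K'(m_n)(1 + \delta_n/\sin\theta)$ and $C_n = C'(m_n)$ gives $K_n \to 1$, $C_n \to 0$ and the inequality \eqref{eqn:phin-distance-goal}; by construction these depend only on $\phi_n$ and the $\phi$-geometry of $I$ through $d_0 = d_\phi(I, Z_\phi)$. Finally, for a half- or bi-infinite $I$ with $d_\phi(I, Z_\phi) > 0$ the same reasoning applies to every bounded subsegment, yielding a uniform local quasigeodesic estimate for the image under $\ep_{\phi_n}$ of the $\phi_n$-height parameterization of $I$; the estimate for arbitrary $x_0, x_1$ then follows from the local-to-global principle of Lemma~\ref{lem:local-to-global}, exactly as in the concluding step of the proof of Theorem~\ref{thm:height-estimate}.
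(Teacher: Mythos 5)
Your proof is correct and takes essentially the same route as the paper's: transport the segment into the $\phi_n$-metric via Lemma~\ref{lem:phipsi}, apply Theorem~\ref{thm:height-estimate} at a scale $m_n\to\infty$ (possible because the distance to $Z_{\phi_n}$ grows like $\|\phi_n\|^{1/2}$ while $\sqrt{L_n}$ grows like $\|\phi_n\|^{1/4}$), and convert the additive height error into a multiplicative one using that a straight segment has constant length-to-height ratio --- your $|\hat h_n - h| < (\delta_n/\sin\theta)h$ is exactly the paper's $|h_n - \|\phi_n\|^{1/2}h| < (d/4k)\|\phi_n\|^{1/2}h$ obtained from $L/L^{(1)}=h$. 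The only cosmetic difference is that the paper cuts $I$ into height-one subsegments and invokes Lemma~\ref{lem:local-to-global} even for bounded $I$, whereas you apply Theorem~\ref{thm:height-estimate} to the whole bounded subsegment directly and reserve the local-to-global step for the infinite case.
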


Before starting the proof, we remark that since the quantity
$\|2\phi_n\|^{1/2} h$ appearing in this estimate exactly the
$(2 \|\phi_n\| \phi)$-height of $I$, an equivalent statement is that the
$(2 \|\phi_n\| \phi)$-height parameterization of $I$ maps to a
quasigeodesic in $\H^3$ with constants $(K,C)$ converging to $(1,0)$
as $n \to \infty$.

\begin{proof}
Let $L^{(1)}$ denote the length of a subsegment of $I$ that has height
$1$.  Let $J \subset I$ denote a subsegment of length $L < L^{(1)}$,
which therefore has height $L / L^{(1)}$.

Let $U$ denote the $d/2$-neighborhood of $J$ in the $\phi$-metric,
where $d = d_\phi(J,Z_\phi)$.  Let $\phi_n^0 = \phi_n / \|\phi_n\|$.
By uniform convergence of the differentials $\phi_n^0$, for each $k
\in \N$ there exists $N(k) \in \N$ such that for $n > N(k)$ we can
apply Lemma \ref{lem:phipsi} to $J$ and $U$ with $\delta = d / (16 k
L^{(1)})$.  Thus for such $n$ there is a $\phi_n^0$-geodesic segment
with endpoints $\{x_0,x_1\}$, and we have
\begin{equation}
\begin{split}
\label{eqn:compare-length-height}
 \max(|L_n^0 - L|, |h_n^0 - h|) &< \frac{dL}{4kL^{(1)}} = \frac{dh}{4k}
\\
 d_{\phi_n^0}(J',Z_{\phi_n}) \geq d_{\phi_n^0}(J',\partial U) &>
 \frac{d}{8}
\end{split}
\end{equation}
where $L_n^0$ and $h_n^0$ are the $\phi_n^0$-length and height of the
$\phi_n^0$-geodesic segment $J'$ with endpoints $\{x_0,x_1\}$. 
Letting $L_n = \|\phi_n\|^{1/2} L_n^0$ and $h_n = \|\phi_n\|^{1/2}
h_n^0$ denote the corresponding quantities for $J'$ with respect to
$\phi_n$, and writing $d_n \define d_{\phi_n}(J',Z_{\phi_n}) >
\|\phi_n\|^{1/2}d/k$, we have
$$ \frac{d_n}{\sqrt{L_n}} > \frac{\|\phi_n\|^{1/4} d}{8 L_n^0}
> \frac{\|\phi_n\|^{1/4} d}{8(L - d/(4k))} $$ again for all $k$ and $n
> N(k)$.  By taking $n$ and $k$ large enough it follows that $d_n$ and
$d_n / \sqrt{L_n}$ can be made arbitrarily large.

For any $m > 0$, let $N'(m)$ be such that $d_n / (1 + \sqrt{L_n}) > m$
for all $n > N'(m)$.  Then for $m > M$ and $n > N'(m)$ we apply
Theorem \ref{thm:height-estimate} to $J'$,
concluding that
\begin{equation*}
 K'(m)^{-1} \sqrt{2} \, h_n - C'(m) < d_{\H^3}(\ep_{\phi_n}(x_0),\ep_{\phi_n}(x_1))
<
K'(m) \sqrt{2} \, h_n + C'(m).
\end{equation*}
Thus the parameterization of $J$ by $2\phi_n$-height is mapped by
$\ep_{\phi_n}$ to a $(K'(m),C'(m))$-quasigeodesic.  Note that the
$2\phi_n$-height of $J$ tends to $\infty$ as $n \to \infty$.

If the length of $I$ is greater than $L^{(1)}$ (e.g.~if it is a ray or
  infinite geodesic), then for $n$ sufficiently large we can apply
  Lemma \ref{lem:local-to-global} to $J$ and conclude that in any
  case, the $2\phi_n$-height parameterization of $I$ maps to a
  $(K''(m),C''(m))$-quasigeodesic, where $(K''(m),C''(m)) \to (1,0)$ as
  $m \to \infty$.

Finally we must consider the effect of changing from the
$2\phi_n$-height parameterization to the $2\phi$-height.  For the
remainder of the proof let $x_0,x_1\in I$ be an arbitrary pair
of points and let $L$ denote the $\phi$-length of the segment $[x_0,x_1]
\subset I$.  Note that it is no longer assumed that $L < L^{(1)}$.

By applying \eqref{eqn:compare-length-height}
  $\lceil L/L^{(1)} \rceil$ times we conclude that the $\phi_n$-height
  differences $h_n$ and $\phi$-height difference $h$ between $x_0$ and
  $x_1$ satisfy
$$ |\sqrt{2}\,h_n - \|2\phi\|^{1/2}h| < \frac{\|2\phi_n\|^{1/2} d L}{4kL^{(1)}} =
\left (\frac{d}{4k} \right)\|2\phi_n\|^{1/2} h,$$
where in the last step we have used that $L/L^{(1)} = h$.
Therefore, changing from the $2\phi_n$-height to the
$(\|2\phi_n\|\phi)$-height parameterization of $I$ introduces a
multiplicative error in the distance estimate that is
$o(\|\phi_n\|^{1/2}h)$ as $k \to
\infty$, and \eqref{eqn:phin-distance-goal} follows with
\begin{equation*}
\begin{split}
N &= N'(M),\\ 
K_n &= K''(m_0(n)) + d/4k_0(n),\\
C_n &= C''(m_0(n)),\\
\end{split}
\end{equation*}
where
\begin{equation*}
\begin{split}
k_0(n) &= \max \{ k \: | \: N(k) < n \},\\
m_0(n) &= \max \{ m \: | N'(m) < n \}.\\
\end{split}
\end{equation*}
\end{proof}

Complementing Theorem \ref{thm:phin-distance} we have the following
estimate for horizontal segments:

\begin{thm}
\label{thm:phin-horizontal-distance}
Let $I \subset \Tilde{X}$ denote a nonsingular $\phi$-horizontal
segment.  Then we have
$$ \diam(\ep_{\phi_n}(I)) = o(\|\phi_n\|^{1/2}) \text{ as } n \to
\infty.$$
\end{thm}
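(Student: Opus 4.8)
The plan is to follow the proof of Theorem~\ref{thm:phin-distance} almost verbatim, replacing the non-horizontal height estimate (Theorem~\ref{thm:height-estimate}) by the horizontal collapsing estimate (Corollary~\ref{cor:horizontal-height-estimate}). Since $I$ is a segment, its image is a curve and
$$ \diam(\ep_{\phi_n}(I)) = \sup_{[y_0,y_1] \subseteq I} d_{\H^3}\!\left(\ep_{\phi_n}(y_0), \ep_{\phi_n}(y_1)\right), $$
so it suffices to bound $d_{\H^3}(\ep_{\phi_n}(y_0),\ep_{\phi_n}(y_1))$ by $o(\|\phi_n\|^{1/2})$ uniformly over subsegments $[y_0,y_1] \subseteq I$.

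First I would fix $d = d_\phi(I,Z_\phi) > 0$ and the $\phi$-length $L_\phi$ of $I$, and let $U$ be the $d/2$-neighborhood of $I$ in the $\phi$-metric. Writing $\phi_n^0 = \phi_n/\|\phi_n\|$, the uniform convergence \eqref{eqn:uniform-convergence} allows me to apply Lemma~\ref{lem:phipsi} to $U$ and to each subsegment of $I$ with a comparison constant $\delta_n \to 0$ (coupled to $\|\phi_n\|$ exactly as in the proof of Theorem~\ref{thm:phin-distance}). For each subsegment $[y_0,y_1]$ this produces a nonsingular $\phi_n^0$-geodesic $\hat J \subset U$ joining $y_0$ to $y_1$. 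Since $I$ is $\phi$-horizontal, the $\phi$-height of $[y_0,y_1]$ vanishes, so part~(v) of Lemma~\ref{lem:phipsi} forces the $\phi_n^0$-height of $\hat J$ to be less than $\delta_n L_\phi$; after rescaling to $\phi_n$, the $\phi_n$-height $h_n$ of $\hat J$ satisfies $h_n < \|\phi_n\|^{1/2}\delta_n L_\phi = o(\|\phi_n\|^{1/2})$, while its $\phi_n$-length $L_n$ is comparable to $\|\phi_n\|^{1/2} L_\phi$.

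Now $\hat J$ is a nonsingular $\phi_n$-geodesic with $d_n = d_{\phi_n}(\hat J, Z_{\phi_n})$ comparable to $\|\phi_n\|^{1/2} d$, so the computation from the proof of Theorem~\ref{thm:phin-distance} shows $d_n/(1+\sqrt{L_n})$ is comparable to $\|\phi_n\|^{1/4} d/\sqrt{L_\phi} \to \infty$, and I may apply the height estimate of Theorem~\ref{thm:height-estimate} to $\hat J$ with some $m = m(n) \to \infty$. As $\hat J$ has $\phi_n$-height $h_n$, this yields $d_{\H^3}(\ep_{\phi_n}(y_0),\ep_{\phi_n}(y_1)) \leq K'(m) h_n + C'(m)$; in the exceptional case that $\hat J$ happens to be exactly $\phi_n$-horizontal, Corollary~\ref{cor:horizontal-height-estimate} applies instead and gives the stronger bound $C'(m)$ on the length of the image. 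In either case, using $h_n = o(\|\phi_n\|^{1/2})$ and $C'(m) \to 0$, I obtain
$$ d_{\H^3}\!\left(\ep_{\phi_n}(y_0), \ep_{\phi_n}(y_1)\right) \leq K'(m) h_n + C'(m) = o(\|\phi_n\|^{1/2}), $$
uniformly in the subsegment, which is the desired bound.

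The point requiring care is the interaction of the two scales. The segment $I$ has $\phi_n$-length of order $\|\phi_n\|^{1/2} \to \infty$, yet it is horizontal only for the limit differential $\phi$, not for $\phi_n$, so passing to $\phi_n$-geometry introduces a spurious vertical displacement $h_n$. The key quantitative input is that the comparison constant $\delta_n$ of Lemma~\ref{lem:phipsi} can be driven to $0$ using the uniform convergence \eqref{eqn:uniform-convergence}, and fast enough that $h_n = \|\phi_n\|^{1/2}\delta_n L_\phi$ remains $o(\|\phi_n\|^{1/2})$; this, together with the genuine horizontal collapsing supplied by Theorem~\ref{thm:height-estimate} and Corollary~\ref{cor:horizontal-height-estimate}, is exactly what makes the displacement negligible on the natural scale $\|\phi_n\|^{1/2}$.
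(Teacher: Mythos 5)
Your proposal is correct and follows essentially the same route as the paper: the paper likewise applies Lemma~\ref{lem:phipsi} to the $d/2$-neighborhood of $I$ with a comparison constant tending to $0$, uses part~(v) to see that the $\phi_n$-height of the comparison geodesic is $o(\|\phi_n\|^{1/2})$, feeds this into the estimate of Theorem~\ref{thm:height-estimate}/Corollary~\ref{cor:horizontal-height-estimate}, and obtains uniformity over subsegments by noting that shrinking the segment only improves the constants. Your explicit treatment of the exactly-$\phi_n$-horizontal case and of the sup over subsegments matches the paper's (more tersely stated) argument.
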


As with Theorem \ref{thm:phin-distance}, the above estimate for the
geometry of the image is not uniform---it depends on the particular
segment $I$.

\begin{proof}
We proceed in much the same way as the previous proof, but using the
  whole segment instead of a subsegment of height $1$.

  Denote by $L_n$ the $\phi_n$-length of the $\phi_n$-geodesic $I'$
  with the same endpoints as $I$ and by $d_n$ the $\phi_n$-distance
  from $I'$ to $Z_{\phi_n}$.  For large $n$ we can apply Lemma
  \ref{lem:phipsi} to a $d/2$-neighborhood $I$ in the $\phi$-metric
  with $\delta = d / (16kL)$, where $L$ is the $\phi$-length of $I$.
  Then as in the previous proof we have $m_n \to \infty$ as $n,k \to
  \infty$, where
$$ m_n \define \frac{d_n}{1 + \sqrt{L_n}}.$$
We also have $h_n^0 < \frac{d}{4k}$, or equivalently, $\sqrt{2} \, h_n < d
\|2\phi_n\|^{1/2} / (4k)$.

Let $x_0,x_1$ be the endpoints of $I$, which are also the endpoints of
$I'$. Applying Corollary
  \ref{cor:horizontal-height-estimate} if $I'$ is $\phi_n$-horizontal,
  and Theorem \ref{thm:height-estimate} if it is not, we conclude
\begin{equation*}
\begin{split}
d_{\H^3}(\ep_{\phi_n}(x_0),\ep_{\phi_n}(x_1)) &\leq K'(m_n) \sqrt{2}\,h_n +
C'(m_n)\\ &\leq \frac{K'(m) d}{4k} \|2\phi_n\|^{1/2} + C'(m_n),
\end{split}
\end{equation*}
for $n > N(k)$, and using $k = k_0(n)$ as in the proof of Theorem
\ref{thm:phin-distance} we find that the right hand side is
$o(\|\phi_n\|^{1/2})$ as $n \to \infty$.

Finally, we note that a subsegment of $I$ is shorter and its distance
from $Z_\phi$ is no less than that of $I$.  Decreasing $L$ and
increasing $d$ preserve (or improve) all of the estimates above, so
the distance estimate above applies to all pairs $x_0,x_1
\in I$.  This gives the desired bound on the diameter of
$\ep_{\phi_n}(I)$.
\end{proof}

\subsection{Periodic geodesics}

Given an element $g \in \SL_2\C$ let
$$ \ell(g) = \inf_{x \in \H^3} d(x,g \cdot x)$$
denote its translation length when acting as an isometry of $\H^3$.
Thus $\ell(g) > 0$ if and only if $g$ is a hyperbolic element, in
which case $g$ translates along its geodesic axis by distance
$\ell(g)$.

\begin{thm}
\label{thm:periodic-length}
If $\gamma \in \Pi$ is represented by
a periodic $\phi$-geodesic of height $h$, then
\begin{equation}
\label{eqn:llim}
\lim_{n \to \infty} \frac{\ell(\rho_n(\gamma))}{\|2\phi_n\|^{1/2} } = h.
\end{equation}
In particular if $h > 0$ then $\rho_n(\gamma)$ is hyperbolic for all
sufficiently large $n$.
\end{thm}

\begin{proof}
Recall that $\ep_{\phi_n}(\gamma \cdot x) = \rho_n(\gamma) \cdot
\ep_{\phi_n}(x)$.

First suppose $h>0$.  Applying Theorem \ref{thm:phin-distance} to a
nonsingular $\Tilde{\phi}$-geodesic axis of $\gamma$ in $\Tilde{X}$,
we find that $\rho_n(\gamma)$ preserves a $(K_n,C_n)$-quasigeodesic axis in
$\H^3$ along which it moves points distance $d_n$, where
$$K_n^{-1} \|2\phi_n\|^{1/2}h -
C_n < d_n <  K_n^{-1} \|2\phi_n\|^{1/2}h + C_n.$$
By Lemma \ref{lem:stability}, this quasigeodesic axis lies in a
uniformly bounded neighborhood of the geodesic axis of
$\rho_n(\gamma)$.  The translation length of $\ell(\rho_n(\gamma))$
is therefore $d_n + O(1)$ as $n \to \infty$, and since $(K_n,C_n) \to
(1,0)$, the desired estimate follows.

If $h = 0$ then we apply Theorem \ref{thm:phin-horizontal-distance} to
the horizontal $\Tilde{\phi}$-geodesic fixed by $\gamma$ to get a
pairs of points in $\H^3$ related by $\rho_n(\gamma)$ and separated by
distance $o(\|\phi_n\|^{1/2})$.  This distance is an upper bound for
$\ell(\rho_n(\gamma))$ hence $\lim_{n \to \infty}
\frac{\ell(\rho_n(\gamma))}{\|2\phi_n\|^{1/2} } = 0$ as required.
\end{proof}

\section{The character variety and growth rates}
\label{sec:properness}

In this section we show how the results and techniques of Sections
\ref{sec:epstein}--\ref{sec:sequences} can be used to study the growth
rate of the holonomy representation as a function of the Schwarzian.

\subsection{The character variety and holonomy map}
\label{sec:character-varieties}

The $\SL_2(\C)$-representation variety of $\Pi$ is the set $\sR(\Pi) =
\Hom(\Gamma,\SL_2(\C))$.  Choosing a finite generating set $\Sigma$
for $\Pi$ realizes $\sR(\Pi)$ as a closed algebraic subset of
$(\SL_2(\C))^{|\Sigma|}$, giving it the structure of an algebraic
variety.

The $\SL_2(\C)$-character variety of $\Pi$, denoted $\X(\Pi)$, is an
affine algebraic variety consisting of the characters (traces) of
representations in $\sR(\Gamma)$; there is a natural algebraic map
$\sR(\Gamma) \to \X(\Gamma)$ taking a representation to its character.
We denote this map by $\rho \mapsto [\rho]$.  The character variety
can also be described as an algebraic quotient
$$ \X(\Gamma) = \sR(\Gamma) \sslash \SL_2(\C)$$
where $\SL_2(\C)$ acts by conjugating representations.  See
\cite{culler-shalen} and
\cite[Sec.~II.4]{morgan-shalen:valuations-trees} for details about
these constructions.

As mentioned in the introduction, there is holomorphic map 
$$\hol : Q(X) \to \X(\Pi),$$
the \emph{holonomy map}, which associates to a projective structure on $X$
the character of its holonomy representation (which is well-defined,
since the representation itself is well-defined up to conjugation).

\subsection{Properness}
Gallo, Kapovich, and Marden showed that the holonomy map $Q(X) \to \X(\Pi)$ is
a proper map \cite[Thm.~11.4.1]{gkm}, following an outline presented
in \cite[Sec.~7.2]{kapovich:monodromy}.  A geometric approach to properness using pleated
surfaces can be found in \cite{tanigawa:divergence}.  The same result
also follows easily from Theorem \ref{thm:periodic-length}:

\begin{thm}
The map $\hol: Q(X) \to \X(\Pi)$ is proper.
\end{thm}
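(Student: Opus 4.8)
The plan is to argue by contradiction, reducing divergence in $\X(\Pi)$ to the blow-up of a single trace function, which Theorem \ref{thm:nonsingular-length} supplies almost for free. First I would recall that $\hol$ is proper exactly when every sequence $\phi_n \in Q(X)$ with $\|\phi_n\| \to \infty$ has $\hol(\phi_n)$ leaving every compact subset of $\X(\Pi)$. Since $\X(\Pi)$ is an affine variety whose coordinate ring is generated by the trace functions $[\rho] \mapsto \tr \rho(\gamma)$, any compact set $Z \subset \X(\Pi)$ satisfies $\sup_{[\rho] \in Z} |\tr \rho(\gamma)| < \infty$ for each $\gamma \in \Pi$. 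Hence it suffices to produce, for a given divergent sequence, an element $\gamma \in \Pi$ with $|\tr \rho_n(\gamma)| \to \infty$.

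Suppose for contradiction that $\hol(\phi_n)$ remains in a compact set $Z$ while $\|\phi_n\| \to \infty$. Because $Q(X)$ is finite-dimensional, its projectivization is compact, so after passing to a subsequence I may assume $\phi_n / \|\phi_n\| \to \phi$ with $\|\phi\| = 1$; in particular $\phi \neq 0$ and the hypotheses \eqref{eqn:assumptions} hold. By Theorem \ref{thm:dense} the differential $\phi$ has periodic geodesics in a dense set of directions, so I may select one that is not horizontal; let $\gamma \in \Pi$ be the free homotopy class it represents and let $h$ be its $\phi$-height, which is positive precisely because the geodesic is non-horizontal. Theorem \ref{thm:nonsingular-length} then gives $\ell(\rho_n(\gamma)) / \|\phi_n\|^{1/2} \to h$, and since $\|\phi_n\| \to \infty$ this yields $\ell(\rho_n(\gamma)) \to \infty$.

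Finally I would convert this translation-length blow-up into a trace blow-up. An element of $\SL_2(\C)$ with eigenvalues $\lambda, \lambda^{-1}$ (with $|\lambda| \geq 1$) acts on $\H^3$ with translation length $2\log|\lambda|$ and has $|\tr| = |\lambda + \lambda^{-1}| \geq |\lambda| - |\lambda|^{-1} = 2\sinh\big(\tfrac{1}{2}\ell\big)$; thus $|\tr \rho_n(\gamma)| \to \infty$, contradicting the boundedness of the trace function of $\gamma$ on the compact set $Z$. This completes the argument. The entire substance is already contained in Theorem \ref{thm:nonsingular-length}; the only points needing care are the subsequence selection used to reach its hypotheses (which guarantees $\phi \neq 0$, hence the existence of a non-horizontal periodic geodesic of positive height) and the elementary passage from translation length to trace. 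I expect no genuine obstacle beyond organizing the contradiction cleanly.
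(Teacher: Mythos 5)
Your proof is correct and follows essentially the same route as the paper: pass to a projectively convergent subsequence, choose a non-horizontal periodic $\phi$-geodesic, and apply Theorem \ref{thm:nonsingular-length} to force the translation length of that class to infinity. The only cosmetic differences are that the paper detects escape from compact sets via continuity of the translation length function $\ell_\gamma$ on $\X(\Pi)$ rather than via the trace function, and you make explicit the (necessary) choice of a non-horizontal periodic geodesic so that $h>0$, a point the paper's proof leaves implicit.
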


\begin{proof}
Let $\phi_n \in Q(X)$ be a divergent sequence.  By passing to a
subsequence we can assume that $\phi_n$ converges projectively,
i.e.~$\phi_n / \|\phi_n\| \to \phi$.  Let $\gamma \in \Pi$ be freely
homotopic to a periodic $\phi$-geodesic.  The translation
length of the image of $\gamma$ under a representation $\rho : \Pi \to
\SL_2(\C)$ defines a continuous function $\ell_\gamma : \X(\Pi) \to
\R$.  By Theorem \ref{thm:periodic-length} we have
$\ell_\gamma(\hol(\phi_n)) \to \infty$, so the image of the sequence
$\{ \hol(\phi_n) \}$ is not contained in a compact set.
\end{proof}

\subsection{Growth estimate}
This approach to proving properness of the holonomy map also lends
itself to effective estimates of the growth rate of holonomy
representations.  In fact, Theorem \ref{thm:periodic-length} can be
seen as an estimate of this kind, where translation length of the action
on $\H^3$ is used to measure the ``size'' of a representation. Since
translation length grows logarithmically with respect to trace
coordinates on $X(\Pi)$, the holonomy map itself has exponential
growth in these coordinates.  Making this coordinate-independent, we
have the following:

\begin{thm}[Effective properness]
\label{thm:effective-properness}
For any affine embedding $\X(\Pi) \into \C^n$ and any norm $\| \param
\|$ on $\C^n$ there are constants $A > 0$ and $B$ such that
\begin{equation}
\label{eqn:properness}
 A^{-1} \|\phi\|^{1/2} - B < \log( 1 + \|\hol(\phi)\| ) < A
\|\phi\|^{1/2} + B.
\end{equation}
\end{thm}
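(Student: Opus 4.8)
The plan is to reduce \eqref{eqn:properness} to a comparison between $\|\phi\|^{1/2}$ and the translation lengths of finitely many elements of $\Pi$, for which Theorems \ref{thm:setting-scale}, \ref{thm:nonsingular-length}, and \ref{thm:finite-periodic} supply exactly the geometric input needed. First I would dispose of the embedding and the norm. Since $\X(\Pi)$ is an affine variety whose coordinate ring is finitely generated by trace functions, there is a finite set $S \subset \Pi$ such that every coordinate of the given embedding $\X(\Pi)\into\C^n$ is a polynomial in $\{\tr\rho(\gamma)\}_{\gamma\in S}$, and conversely each such trace is a polynomial in the coordinates. As all norms on $\C^n$ are equivalent, comparing magnitudes and taking logarithms yields constants $a>0$, $b$ with
$$a^{-1}\max_{\gamma\in S}\log(|\tr\rho_\phi(\gamma)|+2) - b \;<\; \log(1+\|\hol(\phi)\|) \;<\; a\max_{\gamma\in S}\log(|\tr\rho_\phi(\gamma)|+2) + b,$$
where $\rho_\phi \define \hol(\phi)$. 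I would then pass from traces to translation lengths using the elementary two-sided bound $\tfrac{1}{2}\ell(A) \leq \log(|\tr A|+2) \leq \tfrac{1}{2}\ell(A) + \log 4$ valid for all $A \in \SL_2(\C)$ (with $\ell(A)$ the translation length on $\H^3$, which is zero for elliptic and parabolic $A$). Thus it suffices to bound $\max_{\gamma\in S}\ell(\rho_\phi(\gamma))$ above and below by uniform multiples of $\|\phi\|^{1/2}$, up to an additive constant.

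For the upper bound I would use the scale. Enlarging $S$ if necessary, I may assume it consists of words of length at most $N_0$ in the symmetric generating set $\Sigma$. By Lemma \ref{lem:subadditivity}, evaluated at a center $o$ of $\rho_\phi$, every $\gamma\in S$ satisfies $\ell(\rho_\phi(\gamma)) \leq d_{\H^3}(o,\rho_\phi(\gamma)\cdot o) \leq N_0\,R(\rho_\phi)$. Theorem \ref{thm:setting-scale} gives $R(\rho_\phi) < C\|\phi\|^{1/2}$, so $\max_{\gamma\in S}\ell(\rho_\phi(\gamma)) \leq N_0 C\|\phi\|^{1/2}$, which feeds directly into the upper half of the claim.

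For the lower bound I would use Theorem \ref{thm:finite-periodic} to produce a single finite family of witnesses. That theorem provides a finite set $P\subset\Pi$ so that for every $\phi\neq 0$ some $\gamma\in P$ is freely homotopic to a nearly vertical periodic $\phi$-geodesic whose flat annulus has width at least $w_0\|\phi\|^{1/2}$; near-verticality converts this into a lower bound $h \geq c_0\|\phi\|^{1/2}$ on the $\phi$-height of the core geodesic. Enlarging $S$ to contain $P$, Theorem \ref{thm:nonsingular-length} shows $\ell(\rho_\phi(\gamma))$ is comparable to $h$, so some $\gamma\in S$ has $\ell(\rho_\phi(\gamma)) \geq c\|\phi\|^{1/2}$, giving the lower half of the claim.

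The hard part will be uniformity: Theorems \ref{thm:setting-scale} and \ref{thm:nonsingular-length} are asymptotic statements about a single projectively convergent sequence, and their implied constants a priori depend on the projective limit, whereas \eqref{eqn:properness} demands bounds uniform over all of $Q(X)$. I would bridge this gap by a compactness argument on the unit sphere of $Q(X)$: if, say, $R(\rho_{\phi_n})/\|\phi_n\|^{1/2}$ failed to stay bounded for some $\phi_n$, then (since this ratio is continuous and positive on compact annuli in $Q(X)$, using that the holonomy of a projective structure is nonelementary) one would be forced to have $\phi_n\to\infty$; passing to a subsequence with $\phi_n/\|\phi_n\|\to\phi$ and applying Theorem \ref{thm:setting-scale} to that sequence contradicts the unboundedness. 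The same scheme uniformizes the lower bound, where finiteness of $P$ lets one pass to a subsequence on which the chosen $\gamma\in P$ is constant before invoking Theorem \ref{thm:nonsingular-length}, and the width hypothesis of Theorem \ref{thm:finite-periodic} is precisely what keeps the limiting normalized height bounded away from zero. Finally, for $\phi$ of small norm the left side of \eqref{eqn:properness} is negative while $\log(1+\|\hol(\phi)\|)$ remains bounded near the Fuchsian locus, so these $\phi$ are absorbed into the constant $B$.
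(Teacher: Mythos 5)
Your proposal is correct and shares the paper's overall skeleton --- reduce $\|\hol(\phi)\|$ to finitely many traces via finite generation of the coordinate ring of $\X(\Pi)$, convert traces to translation lengths, and get the lower bound from the finite witness set $P$ of Theorem \ref{thm:finite-periodic} --- but it diverges at the uniformity step. The paper does not revisit the asymptotic Theorems \ref{thm:setting-scale} and \ref{thm:nonsingular-length} at all here; instead it routes both bounds through Theorem \ref{thm:length-bounds}, a statement quantified over a \emph{single} $\gamma$ and \emph{all} $\phi$ with $\|\phi\|$ large, which is proved directly by realizing the free homotopy class by a curve of controlled $\phi$-length avoiding a definite neighborhood of $Z_\phi$ (compactness of the unit sphere in $Q(X)$) and then applying the quantitative Epstein--Schwarz estimates (Theorem \ref{thm:height-estimate} and the Lipschitz bound of Theorem \ref{thm:maingeom}) with constants depending explicitly on the width and angle of the annulus. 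Your alternative --- bootstrapping uniformity out of the sequential theorems by a contradiction-and-compactness argument on the unit sphere --- is valid, and you correctly isolate it as the crux; the price is that the constants become non-effective, which sits a little awkwardly with the ``effective properness'' label even though the theorem as stated only asserts existence of $A,B$. Two small points you should tighten if you write this up: (a) in the lower-bound argument, after fixing $\gamma\in P$ along the subsequence you must check that $\gamma$ remains represented by a \emph{nonsingular}, non-horizontal periodic geodesic for the projective limit $\phi$ (this follows from closedness of the condition ``periodic with annulus of width $\geq w_0$ and angle within $\epsilon$ of vertical'' on the unit sphere, the same persistence used to prove Theorem \ref{thm:finite-periodic}); and (b) the width hypothesis controls the distance from the core geodesic to $Z_\phi$, not its height --- the height lower bound $h\geq c_0\|\phi\|^{1/2}$ needs near-verticality \emph{together with} a uniform lower bound on the systole of the unit-norm $\phi$-metrics, again from compactness of the unit sphere. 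Your trace/length dictionary $\tfrac12\ell(A)\leq\log(|\tr A|+2)\leq\tfrac12\ell(A)+\log 4$ is correct and is exactly the ``relation between trace and translation length'' the paper invokes without writing out.
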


In \cite{simpson}, Simpson uses harmonic maps techniques to obtain a
similar bound for the growth rate of the map from the \emph{de Rham moduli
space} of rank-$2$ systems of ordinary differential equations over a
compact Riemann surface to the character variety of the fundamental
group.  It would be interesting to know whether the set of projective
structures is properly embedded in this moduli space of ODEs, and thus
to see if the growth rate of holonomy in terms of the norm of the
Schwarzian can also be estimated by Simpson's technique.

The proof of Theorem \ref{thm:effective-properness} will depend on an
estimate that is a direct analog of Theorem \ref{thm:periodic-length},
but where we consider a fixed homotopy class of curves and an
arbitrary quadratic differential, instead of a fixed sequence of
quadratic differentials and an arbitrary homotopy class.

\begin{thm}
\label{thm:length-bounds}
For each $\gamma \in \Pi$ there exists constants $C>0$ and $N>0$ such
that if $\phi \in Q(X)$ satisfies $\|\phi\| > N$ then
$$ \ell(\rho_\phi(\gamma)) \leq C \|\phi\|^{1/2}. $$
Furthermore, if $\gamma$ is represented by a periodic $\phi$-geodesic
of height $h$, angle $\theta > 0$, and whose associated flat annulus has
width at least $w \|\phi\|^{1/2}$, then
$$ \ell(\rho_\phi(\gamma)) \geq c \|\phi\|^{1/2},$$
where in this case $c$ and $N$ also depend on $\gamma$, $\theta$, and $w$.
\end{thm}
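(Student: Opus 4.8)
The plan is to prove the two inequalities separately, in each case feeding a fixed homotopy class into the collapsing estimate of Theorem~\ref{thm:height-estimate}. The ingredient not already present in Sections~\ref{sec:epstein}--\ref{sec:limits} is that the $\phi$-geometry of a \emph{fixed} curve is tied to the scale $\|\phi\|^{1/2}$ through the conformal geometry of $X$: taking $|\phi|^{1/2}$ as a competitor in the definition of extremal length gives $\ell_\phi(\gamma) \le \sqrt{\mathrm{Ext}_X(\gamma)}\,\|\phi\|^{1/2}$, and dually, since extremal length decreases under inclusion, any embedded flat annulus whose core lies in the class of $\gamma$ has modulus at most $1/\mathrm{Ext}_X(\gamma)$. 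Here $\mathrm{Ext}_X(\gamma)$ is a positive constant depending only on $X$ and $\gamma$.

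For the upper bound I would let $\alpha$ be the $\phi$-geodesic representative of $\gamma$, choose a lift joining $x$ to $\gamma\cdot x$ in $\Tilde{X}$ with $x\notin\Tilde{Z}_\phi$, and replace $\alpha$ by a freely homotopic loop that avoids the $D_0$-neighborhood of $Z_\phi$, where $D_0$ is the constant of Theorem~\ref{thm:maingeom}. Each excursion of $\alpha$ into a $D_0$-disk around a zero is traded for an arc along the boundary circle, which (cone angles on $X$ being bounded) costs only a bounded factor in length, so the rerouted path still has $\phi$-length $O(\|\phi\|^{1/2})$ by the extremal-length bound. On this path $\ep_\phi$ is uniformly Lipschitz by Theorem~\ref{thm:maingeom}(i), and equivariance gives
\[
\ell(\rho_\phi(\gamma)) \le d_{\H^3}\!\big(\ep_\phi(x),\ep_\phi(\gamma\cdot x)\big) \le (1+C_0 D_0^{-2})\,O(\|\phi\|^{1/2}) = O(\|\phi\|^{1/2}).
\]
(One could equally bound the scale $R(\rho_\phi)$ one generator at a time and then apply Lemma~\ref{lem:subadditivity}, paralleling the upper bound in Theorem~\ref{thm:setting-scale}.)

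For the lower bound I would reproduce the argument of Theorem~\ref{thm:nonsingular-length} on the central leaf of the annulus, after first converting the width hypothesis into a height bound. Writing $W\ge w\|\phi\|^{1/2}$ for the width and $L$ for the core length, the modulus bound gives $W/L\le 1/\mathrm{Ext}_X(\gamma)$, hence $L\ge \mathrm{Ext}_X(\gamma)\,W$ and $h=L\sin\theta \ge \big(\mathrm{Ext}_X(\gamma)\,w\sin\theta\big)\|\phi\|^{1/2}$; meanwhile the area bound $LW\le\|\phi\|$ gives $L\le\|\phi\|^{1/2}/w$, and the central leaf sits at $\phi$-distance at least $W/2\ge\tfrac{w}{2}\|\phi\|^{1/2}$ from $Z_\phi$. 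For $\|\phi\|$ large these force $d_\phi(J,Z_\phi)/(1+\sqrt{L}) \ge \tfrac{w}{2}\|\phi\|^{1/2}/(1+\|\phi\|^{1/4}/\sqrt{w})$ to exceed any prescribed $m\ge M$, so Theorem~\ref{thm:height-estimate} applies to one period $J$ of the axis. Exactly as in Theorem~\ref{thm:nonsingular-length}, the $\phi$-height parameterization of the axis maps to a $\rho_\phi(\gamma)$-invariant $(K'(m),C'(m))$-quasigeodesic; comparing $d_{\H^3}(\ep_\phi(x),\rho_\phi(\gamma)^n\ep_\phi(x))$ with $n\,\ell(\rho_\phi(\gamma))$ as $n\to\infty$ and invoking Lemma~\ref{lem:stability} yields $\ell(\rho_\phi(\gamma))\ge K'(m)^{-1}h\ge c\|\phi\|^{1/2}$, with $c$ depending on $\gamma,\theta,w$ as claimed.

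I expect the main obstacle to be the step in the lower bound that extracts a \emph{lower} bound on the height from the width. The wide-annulus and finite-area hypotheses, taken naively, only bound the core length $L$ from above, so it is essential to use that the modulus of an embedded annulus on the \emph{fixed} surface $X$ is bounded --- equivalently, that $\mathrm{Ext}_X(\gamma)>0$ --- to reverse this and make $L$, and hence $h$, grow like $\|\phi\|^{1/2}$. Once this is in hand, the remaining work is the routine verification that the distance-from-zeros hypothesis of Theorem~\ref{thm:height-estimate} holds (which it does with room to spare, since $\|\phi\|^{1/2}$ dominates $\sqrt{L}=O(\|\phi\|^{1/4})$), together with the translation-length bookkeeping already carried out in Theorem~\ref{thm:nonsingular-length}.
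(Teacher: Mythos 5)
Your proposal is correct and, at the level of overall architecture, matches the paper's proof: both reduce the theorem to the collapsing estimates of Section~\ref{sec:epstein} (Theorem~\ref{thm:maingeom} for the upper bound, Theorem~\ref{thm:height-estimate} plus quasigeodesic stability for the lower bound) applied to a well-placed representative of $\gamma$. Where you genuinely diverge is in how you obtain the two scale comparisons that make this work. The paper gets both from compactness of the unit sphere in $Q(X)$: for $\|\phi\|=1$ the class $\gamma$ has a representative of uniformly bounded length and height avoiding a fixed neighborhood of $Z_\phi$, and (for the periodic case) the angle $\theta$ converts the uniform lower length bound into a height bound; everything then scales by $\|\phi\|^{1/2}$. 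You instead use extremal length: the competitor inequality $\ell_\phi(\gamma)\leq\sqrt{\mathrm{Ext}_X(\gamma)}\,\|\phi\|^{1/2}$ for the upper bound, and monotonicity of extremal length under inclusion to bound the modulus of the flat annulus, which converts the width hypothesis $W\geq w\|\phi\|^{1/2}$ into the core-length lower bound $L\geq\mathrm{Ext}_X(\gamma)W$ and hence $h\geq\mathrm{Ext}_X(\gamma)w\sin\theta\,\|\phi\|^{1/2}$. This is a nice trade: your constants are explicit in terms of $\mathrm{Ext}_X(\gamma)$ rather than produced by a compactness argument, and your height bound actually exploits the width hypothesis, whereas the paper uses the width only for the distance-to-zeros estimate. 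The one step you should tighten is the rerouting in the upper bound: replacing excursions of the $\phi$-geodesic into $D_0$-disks by boundary arcs is clean when the disks about distinct zeros are disjoint (the arc-to-chord ratio on a cone of bounded angle is bounded, so the cost is a bounded multiplicative factor), but when zeros of $\phi$ cluster at scale $D_0$ the neighborhoods overlap, the components are no longer convex, and controlling the number and total cost of the detours requires an additional argument. The conclusion you need --- a representative of $\phi$-length $O(\|\phi\|^{1/2})$ avoiding $N_{D_0}(Z_\phi)$ --- is true, and the paper's compactness argument (or your parenthetical alternative via Lemma~\ref{lem:subadditivity} and the scale bound of Theorem~\ref{thm:setting-scale}) supplies it without this case analysis. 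The lower-bound half of your argument, including the verification that $d_\phi(J,Z_\phi)/(1+\sqrt{L})\to\infty$ so that Theorem~\ref{thm:height-estimate} and Lemma~\ref{lem:local-to-global} apply, is complete and agrees with the paper.
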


\begin{proof}
The unit sphere in $Q(X)$ corresponds to a compact family of metrics
on $X$.  Thus the free homotopy class of an element $\gamma \in \Pi$
can be realized by a curve of uniformly bounded length and height with respect to
any $\phi$ such that $\|\phi\|=1$.  Increasing length and height by a
bounded amount we can further assume that each such realization avoids
a fixed neighborhood of $Z_\phi$.

Scaling to obtain $\phi \in Q(X)$ of any norm, we conclude that
$\gamma$ is represented by a closed curve in $X$ of length bounded by
$C' \|\phi\|^{1/2}$ and which avoids a $\delta
\|\phi\|^{1/2}$-neighborhood of $Z_\phi$, for some constants
$C',\delta$.  We can lift this closed curve to a path in $\Tilde{X}$
whose endpoints are identified by the action of $\gamma$.  Choosing
$N$ large enough we can apply Lemma 3.6 to conclude that $\ep_\phi$ is
uniformly Lipschitz on this path, so the image in $\H^3$ has length
bounded by $C''\|\phi\|^{1/2}$.  Since the endpoints of the image are
identified by $\rho_\phi(\gamma)$, this gives the desired upper bound
for $\ell(\rho_\phi(\gamma))$.

For the periodic case we can again use compactness of the unit sphere
in $Q(X)$ and the angle $\theta$ to obtain a lower bound on the
$\phi$-height of a periodic geodesic homotopic to $\gamma$ of the form
$h > c' \|\phi\|^{1/2}$, where $c'$ depends on $\gamma$ and $\theta$.
Of course the length estimate $L < C' \|\phi\|^{1/2}$ applies as above.
Using the geodesic representative in the center of the flat annulus,
the distance from this geodesic to the nearest zero of $\phi$ is at
least $d = \frac{1}{2} w \|\phi\|^{1/2}$.

For $\|\phi\| > N$ and $N$ sufficiently large (now depending on
$\gamma$, $\theta$, and $w$), we have $d > M (1 + \sqrt{L})$ where $M$
is the constant from Theorem \ref{thm:height-estimate}.  Then
\eqref{eqn:distance-estimate} shows that the lift of the periodic
geodesic to $\Tilde{X}$ maps by $\ep_\phi$ to a uniformly
quasigeodesic axis for $\rho_\phi(\gamma)$ in $\H^3$ on which the
translation length is bounded below by a multiple of the height $h$.
Using the stability of quasigeodesics in $\H^3$ (Lemma
\ref{lem:stability}) we obtain a lower bound of the form
$\ell(\rho_\phi(\gamma)) > c'' \|\phi\| - D$.  The lower bound on
$\|\phi\|$ allows us to remove the additive constant by changing the
multiplicative factor slightly, and the Theorem follows.
\end{proof}

\begin{proof}[Proof of Theorem \ref{thm:effective-properness}.]
Let $P \subset \Pi$ and $w_0$ be as in Theorem
\ref{thm:finite-periodic}.  Since traces of elements of $\Pi$ are
regular functions on $\X(\Pi)$, the traces of elements of $P$ have a
uniformly polynomial upper bound in the coordinates of the affine
embedding.  Thus there are constants $C,k$ such that for all $\gamma
\in P$ we have
$$ |\tr(\rho_\phi(\gamma))|  \leq C ( 1 + \| \hol(\phi) \|)^k.$$

For each $\phi \in Q(X)$ there exists $\gamma \in P$ that is
represented by a periodic $\phi$-geodesic that is nearly vertical and
thus has height bounded below by $c \|\phi\|^{1/2}$ for some positive
constant $c$.  Since we also have a uniform lower bound on the widths
of the corresponding flat annuli, Theorem \ref{thm:length-bounds}
and the relation between trace and translation length give
$$ |\tr(\rho_\phi(\gamma))| > \exp(c' \|\phi\|^{1/2})$$
for some $c' > 0$, as long as $\|\phi\| > M$.  Here we have uniform
constants because $P$ is finite.  Combining this with the
previous inequality and taking logarithms gives the lower bound on
$\|\hol(\phi)\|$ from \eqref{eqn:properness}, where adjusting the
additive constant $B$ allows us to remove the requirement that
$\|\phi\|$ is large.

The upper bound from \eqref{eqn:properness} is similar, but easier:
The ring of regular functions on $\X(\Pi)$ is generated by the trace
functions of finitely many elements of $\Pi$ (see
\cite[Sec.~1.4]{culler-shalen}), so $\|\hol(\phi)\|$ has a polynomial
upper bound in terms of these traces.  Applying the upper bound on
translation length from Theorem \ref{thm:length-bounds} to these
elements and again taking logarithms completes the proof.
\end{proof}

\section{The Morgan-Shalen compactification and straight maps}
\label{sec:final}

\subsection{The compactification}

Consider the map $\X(\Pi) \to (\R^+)^{\Pi}$ given by
$$ [\rho] \mapsto \left ( \log (|\tr \rho(\gamma)| + 2) \right
)_{\gamma \in \Pi}. $$
Let $\P(\R^+)^{\Pi}$ denote the space of rays in $(\R^+)^{\Pi}$ and
consider the projectivized map $\X(\Pi) \to \P(\R^+)^{\Pi}$.  The
image of $\X(\Pi)$ is precompact and the closure of the image defines
the \emph{Morgan-Shalen compactification} of $\X(\Pi)$.  If $\ell :
\Pi \to \R^+$ is a function whose projective class $[\ell]$ is a
boundary point of $\X(\Pi)$, then there exists an $\R$-tree $T$ and
an isometric action of $\Pi$ on $T$ such that
$$ \ell(\gamma) = \inf_{x \in T} d(x, \gamma \cdot x), $$
that is, $\ell$ is the translation length function of an action of
$\Pi$ on an $\R$-tree.  As in the introduction we say in this case
that $T$ \emph{represents} $[\ell]$.

This compactification was introduced in
\cite{morgan-shalen:valuations-trees} where a tree representing a
boundary point is described in terms of a valuation on the function
field of $\X(\Pi)$.  For our purposes it will be important to
construct such a tree directly from the action of a representation on
hyperbolic space, so we will use an alternative construction of
representing trees based on the asymptotic cone.

\subsection{Asymptotic cone construction}

Bestvina \cite{bestvina} and Paulin \cite{paulin} used geometric limit
constructions to build $\R$-trees representing limit points of
sequences of representations in the Morgan-Shalen compactification.
Later, Chiswell \cite{chiswell} and Kapovich-Leeb
\cite{kapovich-leeb} described how these limit constructions can be
interpreted in terms of asymptotic cones of hyperbolic spaces.  We now
review this approach, mostly following the exposition of
\cite[Ch.~9--10]{kapovich:book}.

Fix a non-principal ultrafilter $\omega$ on $\mathbb{N}$ and denote by
$\olim a_n$ the $\omega$-limit of a sequence of real numbers
$\{a_n\}$.    Given a metric space $X$, a sequence of points $c_n \in X$, and
a sequence $\epsilon_n \to 0$ of positive reals, we denote by
$$ \olim \: (\epsilon_n X, c_n)$$
the \emph{asymptotic cone} of $X$ based at $x_n$ with scale factors
$\epsilon_n$; this is the quotient metric space associated with the
set of sequences 
$$\{ x = (x_n)\: | \: x_n \in X, \; \olim \epsilon_n d(c_n,x_n) <
\infty \}$$
and the pseudometric
$$ d(x,y) = \olim \epsilon_n d(x_n,y_n).$$  If $X$ is a $\CAT(\kappa)$ space for some $\kappa<0$ (for example,
$X=\H^3$) then $\olim \: (\epsilon_n X, c_n)$ is an $\R$-tree.

Now fix a finite generating set $\Sigma$ for $\Pi$.  If $\rho :
\Pi \to \isom(X)$ is an isometric action, we define the \emph{local
  scale} of $\rho$ at $x$ to be the quantity
$$R(\rho,x) = \max_{\gamma \in \Sigma} d(x, \rho(\gamma) \cdot x).$$

Specializing to the case of $X = \H^3$, the basic link between the
asymptotic cone construction and the Morgan-Shalen compactification is
the following (see \cite[Sec.~10.4]{kapovich:book}):

\begin{thm}
\label{thm:morgan-shalen-asymptotic-cone}
Consider a sequence $\rho_n \in \X(\Pi)$ and identify it with a
sequence of isometric actions of $\Pi$ on $\H^3$ using the covering
$\SL_2\C \to \PSL_2\C \simeq \isom^+(\H^3)$.  Let $c_n \in \H^3$ be a
sequence of points and $\epsilon_n \to 0$ a sequence of positive
reals.
\begin{rmenumerate}
\item 
If $\olim \epsilon_n R(\rho_n,x_n) < \infty$, then the action
$$\gamma : (x_n) \mapsto (\rho_n(\gamma) \cdot x_n)$$
of $\Pi$ on sequences in $\H^3$ induces an isometric action of $\Pi$ on
the $\R$-tree $T := \olim \: (\epsilon_n \H^3,c_n)$.  

\item If the action of $\Pi$ on $T$ does not have a global fixed
point, and if the sequence $[\rho_n]$ converges in the Morgan-Shalen
compactification, then $T$ represents the Morgan-Shalen limit.
\end{rmenumerate}
\noproof
\end{thm}

The result above is proved in \cite[Sec.~10.4]{kapovich:book}, though
the statements of the theorems in that section are structured somewhat
differently from the one above.  Kapovich makes a specific choice of
basepoints $c_n$, but this choice is only used to show the resulting
action has no global fixed point, which we do not claim here.  (That
an arbitrary sequence of basepoints can be used is also established in
\cite[Prop.~4.8]{bestvina}.)  Similarly, while Kapovich fixes
$\epsilon_n = R(\rho_n,c_n)^{-1}$, the arguments use only that $\olim
\epsilon_n R(\rho_n,x_n) < \infty$.

A key feature of this construction of a limit tree is that it gives a
notion of convergence of a sequence $x_n \in \H^3$ to a point in $T$,
which is simply a restatement of the definition: A point
$x_\infty \in T$ is an equivalence class of sequences in $\H^3$, and
we say $x_n \to x_\infty$ if the sequence $(x_n)$ lies in that
equivalence class.  This allows us to consider the question of whether
a sequence of maps into $\H^3$ converges pointwise to a map into $T$.

\subsection{Convergence of Epstein-Schwarz maps}

We return to the hypotheses of \eqref{eqn:assumptions}, that is,
considering a divergent sequence of projective structures with
holonomy representations $\rho_n$ and quadratic differentials $\phi_n$
converging projectively to $\phi$.  We suppose also that $[\rho_n]$
converges in the Morgan-Shalen compactification to the projective
equivalence class $[\ell]$ of a function $\ell : \Pi \to \R^+$.

Recall also that $\Tilde{Z}_\phi \subset \Tilde{X}$ is the discrete
subset of the universal cover of $X$ consisting of points that project
to zeros of $\phi$, and similarly for $\Tilde{Z}_{\phi_n}$ and
$\phi_n$.

\begin{thm}
\label{thm:limit-of-epstein-maps}
Fix a point $z_0 \in \Tilde{X} \setminus \Tilde{Z}_\phi$ and use
its $\ep_{\phi_n}$-images as basepoints to construct the
asymptotic cone
$$ T = \olim \: (\|2\phi_n\|^{-1/2} \H^3, \ep_{\phi_n}(z_0)).$$
Then:
\begin{rmenumerate}
\item The sequence of maps $\ep_{\phi_n} : \Tilde{X} \setminus
\Tilde{Z}_{\phi_n} \to \H^3$ converges (pointwise) to a continuous map
$\ep_\infty : \Tilde{X} \to T$.
\item For any pair of points $x,y \in \Tilde{X}$ that are endpoints of
a nonsingular $\Tilde{\phi}$-geodesic segment of height $h$, the map
$\ep_\infty$ satisfies
$$d(\ep_\infty(x), \ep_\infty(y)) = h.$$
\item The sequence $\rho_n$ induces an isometric action of $\Pi$ on
$T$ which represents the Morgan-Shalen limit $[\ell]$, and
$\ep_\infty$ is equivariant for this action.
\end{rmenumerate}
\end{thm}

Note that in statement (i) the domains of the maps $\ep_{\phi_n}$ vary
with $n$ so the limit is \emph{a priori} only defined on
$$ \liminf_{n \to \infty} \: (\Tilde{X} \setminus \Tilde{Z}_{\phi_n}),$$
which contains $\Tilde{X} \setminus \Tilde{Z}_\phi$ since
$\phi_n/\|\phi_n\| \to \phi$.  However we will show that the limit map on
$\Tilde{X} \setminus \Tilde{Z}_\phi$ has a unique continuous extension
to $\Tilde{X}$.

\begin{proof}
Suppose $z,z' \in \Tilde{X} \setminus \Tilde{Z}_\phi$.  Join these
points by a polygonal path in $\Tilde{X} \setminus \Tilde{Z}_\phi$
that is a finite union of nonsingular $\phi$-geodesic segments.
Applying Theorems
\ref{thm:phin-distance} and \ref{thm:phin-horizontal-distance} to the
segments and using the triangle inequality we find
\begin{equation}
\label{eqn:distance-and-height}
 \limsup_{n \to \infty} \|2\phi_n\|^{-1/2} d(\ep_{\phi_n}(z),\ep_{\phi_n}(z')) \leq
\bar{h}
\end{equation}
where $\bar{h}$ is the sum of the $\phi$-heights of the segments.
Furthermore, if there is only one segment then the limit exists and is
equal to $\bar{h}$.

Applying this to $z' = z_0$ it follows that $\olim \|2\phi_n\|^{-1/2}
d(\ep_{\phi_n}(z),\ep_{\phi_n}(z_0))$ is finite, thus the sequence $\ep_\infty(z) :=
(\ep_{\phi_n}(z))$ represents a point of the asymptotic cone $T$.
This gives the desired pointwise limit map on $\Tilde{X} \setminus
\Tilde{Z}_\phi$.  Equality of the limit
\eqref{eqn:distance-and-height} in the one-segment case is exactly
statement (ii).

The polygonal path chosen above can be taken to agree with the
minimizing $\Tilde{\phi}$-geodesic joining $z$ to $z'$ except in an
arbitrarily small neighborhood $\Tilde{Z}_\phi$ where the polygonal
path must make short detours to avoid the zeros.  As a result, we can
assume that $\bar{h}$ is as close as we like to the
$\Tilde{\phi}$-height of the minimizing geodesic, which is itself a
lower bound for the $\Tilde{\phi}$-distance from $z$ to $z'$.
Therefore, the estimate above also shows that the limit $\ep_\infty
: (\Tilde{X} \setminus \Tilde{Z}_\phi) \to T$ is $1$-Lipschitz for
that metric, and in particular continuous.  Furthermore, the
asymptotic cone $T$ is a complete metric space (see
e.g.~\cite[Lem.~5.53]{bridson-haefliger}) so the Lipschitz map
$\ep_\infty$ extends uniquely and continuously to the metric
completion of its domain, which is $\Tilde{X}$.  Statement (i) follows.

From the asymptotic cone construction it is immediate that a limit of
equivariant maps is equivariant, as long as the group action is
defined on the asymptotic cone.  Thus statement (iii) is exactly the
conclusion of Theorem \ref{thm:morgan-shalen-asymptotic-cone} once we
establish the relevant hypotheses, i.e.
\begin{enumerate}
\item $\olim \|2\phi_n\|^{-1/2} R(\rho_n,\ep_{\phi_n}(z_0)) < \infty$
\item $\Pi$ acts on $T$ without global fixed points.
\end{enumerate}
Estimate (1) follows from \eqref{eqn:distance-and-height} since each
element of the finite generating set for $\Pi$ can be represented by a
polygonal path in $\Tilde{X} \setminus \Tilde{Z}_\phi$ based at $z_0$.  The total
height of this collection of paths is then a bound for $\limsup_{n \to \infty}
\|2\phi_n\|^{-1/2} R(\rho_n,\ep_{\phi_n}(z_0))$ and thus for the $\omega$-limit as
well.  Hence $\Pi$ acts on $T$.

Now suppose for contradiction that there is a point $x \in T$ fixed by
$\Pi$.  Then $x$ is the equivalence class of a
sequence $(x_n) \subset \H^3$ such that $\olim \|2\phi_n\|^{-1/2} d(x_n,
\rho_n(\gamma) \cdot x_n) = 0$ for all $\gamma \in \Pi$.  Thus
$$ \liminf_{n \to \infty} \|2\phi_n\|^{-1/2} \ell(\rho_n(\gamma)) \leq
\liminf_{n \to \infty} \|2\phi_n\|^{1/2} d(x_n,
\rho_n(\gamma) \cdot x_n) = 0.$$
But this contradicts Theorem \ref{thm:periodic-length} for any
$\gamma \in \Pi$ which can be represented by a periodic and non-horizontal
$\phi$-geodesic, and such elements exist by Theorem \ref{thm:dense}.
This contradiction shows (2), completing the proof of statement (iii).
\end{proof}

\subsection{Dual trees of quadratic differentials}
Given a measured foliation of a surface, we can lift the foliation to
the universal cover and consider the space of leaves; the transverse
measure of the foliation induces a metric on this leaf space, making
it an $\R$-tree on which $\Pi$ acts by isometries (see
\cite{morgan-shalen:free-actions} \cite[Sec.~11.12]{kapovich:book} for
details).  Applying this construction to the horizontal foliation
$\F(\phi)$ of a quadratic differential $\phi \in Q(X)$ gives the
\emph{dual tree} $T_\phi$.  By construction we also have a projection
map $\pi : \Tilde{X} \to T_\phi$.

\subsection{Straight maps}
\label{sec:straight}

A nonsingular $|\phi|$-geodesic segment in $\Tilde{X}$ of height $h$
maps by $\pi$ to a geodesic segment of length $h$ (or a point, if
$h=0$) in $T_\phi$.  We say that a segment in $T_\phi$ is
\emph{nonsingular} if it arises in this way.  (Note that a nonsingular
segment in $T_\phi$ might also arise as the image of a geodesic in
$\Tilde{X}$ that contains singularities, because a given path in
$T_\phi$ can have many geodesic lifts through $\pi$.)

We say that a map $F: T_\phi \to T$ is \emph{straight} if its
restriction to every nonsingular segment in $T_\phi$ is an isometric
embedding.  Evidently an isometry is a straight map, though the
converse does not hold (see e.g.~Lemma \ref{lem:straight-to-R} below).
Because any segment in $T_\phi$ can be lifted to a path in $\Tilde{X}$
that is piecewise geodesic, straight maps are \emph{morphisms} of
$\R$-trees in the sense of \cite{skora}.

We will use the following criterion for recognizing straight maps:

\begin{lem}
\label{lem:straightness}
Let $T$ be an $\R$-tree and $f : \Tilde{X} \to T$ a continuous map
such that for every nonsingular $\phi$-geodesic segment $J$ in
$\Tilde{X}$ with height $h$ and endpoints $x,y$, we have
\begin{equation}
\label{eqn:distance-is-height}
d(f(x),f(y)) = h.
\end{equation}
Then the map $f$ factors as $f = F \circ \pi$ where $F : T_\phi \to T$
is straight and $\pi : \Tilde{X} \to T_\phi$ is the projection.
Furthermore, if $f$ is equivariant with respect to an action of $\Pi$
on $T$, then $F$ is also equivariant.
\end{lem}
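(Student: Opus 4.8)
The plan is to define $F$ by the formula $F(\pi(x)) = f(x)$ and then to verify, in order, that this is well-defined, that it is straight, and that it is equivariant. The crux — from which both well-definedness and straightness follow — is the estimate
\begin{equation}
\label{eqn:f-lipschitz}
d(f(x),f(y)) \leq d_{T_\phi}(\pi(x),\pi(y)) \qquad \text{for all } x,y \in \Tilde{X}.
\end{equation}
Recall that $d_{T_\phi}(\pi(x),\pi(y))$ is the infimum of the heights of paths in $\Tilde{X}$ joining $x$ to $y$, so \eqref{eqn:f-lipschitz} asserts that $f$ is $1$-Lipschitz for the height pseudometric on $\Tilde{X}$ that defines $T_\phi$. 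Granting \eqref{eqn:f-lipschitz}, if $\pi(x) = \pi(y)$ then the right-hand side vanishes and $f(x) = f(y)$; hence $F$ is a well-defined map $T_\phi \to T$ with $f = F \circ \pi$, and \eqref{eqn:f-lipschitz} moreover makes $F$ into a $1$-Lipschitz map.

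To prove \eqref{eqn:f-lipschitz} I would show that $d(f(x),f(y)) \leq \text{height}(\alpha)$ for every path $\alpha$ from $x$ to $y$ and then take the infimum over $\alpha$. First I would treat a path $\alpha$ avoiding $\Tilde{Z}_\phi$: covering $\alpha$ by natural-coordinate disks disjoint from the zeros and invoking a Lebesgue number, subdivide $\alpha$ by points $x = p_0, p_1, \dots, p_N = y$ so that each consecutive pair lies in a common such disk. The straight segment $J_i$ from $p_{i-1}$ to $p_i$ is then a nonsingular $\phi$-geodesic of some height $h_i$, and in the natural coordinate $z$ of its disk one has $h_i = |\Im z(p_i) - \Im z(p_{i-1})|$, which is at most the total transverse measure $\int |dy|$ along the subarc of $\alpha$ between $p_{i-1}$ and $p_i$. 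Applying the hypothesis to each $J_i$ and summing with the triangle inequality gives
\begin{equation*}
d(f(x),f(y)) \leq \sum_{i=1}^N d(f(p_{i-1}),f(p_i)) = \sum_{i=1}^N h_i \leq \text{height}(\alpha).
\end{equation*}
For a general $\alpha$, whose endpoints may lie at zeros of $\phi$, I would fix $\epsilon > 0$, use continuity of $f$ to choose points $x',y' \notin \Tilde{Z}_\phi$ with $d(f(x'),f(x)), d(f(y'),f(y)) < \epsilon$, and perturb the interior of $\alpha$ to a path $\alpha'$ from $x'$ to $y'$ that avoids $\Tilde{Z}_\phi$ and satisfies $\text{height}(\alpha') < \text{height}(\alpha) + \epsilon$ (possible since the zeros are discrete and the transverse measure has no atoms). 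The nonsingular case applied to $\alpha'$ then yields $d(f(x),f(y)) < \text{height}(\alpha) + 3\epsilon$, and letting $\epsilon \to 0$ establishes \eqref{eqn:f-lipschitz}.

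It remains to check straightness and equivariance, both of which are now formal. A nonsingular segment of $T_\phi$ has the form $\sigma = \pi(J)$ for a nonsingular $\phi$-geodesic $J$ of height $h$; parametrizing $J$ by height as $J : [0,h] \to \Tilde{X}$, the map $\pi \circ J$ is an isometric parametrization of $\sigma$. For $0 \leq s \leq t \leq h$ the restriction $J|_{[s,t]}$ is again a nonsingular $\phi$-geodesic, of height $t-s$, so the hypothesis gives $d\bigl(F(\pi(J(s))),F(\pi(J(t)))\bigr) = d(f(J(s)),f(J(t))) = t-s$; thus $F$ restricts to an isometric embedding on $\sigma$, i.e.\ $F$ is straight. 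Finally, since the horizontal foliation is $\Pi$-invariant the projection satisfies $\pi(\gamma \cdot x) = \gamma \cdot \pi(x)$, so if $f$ is equivariant then $F(\gamma \cdot \pi(x)) = F(\pi(\gamma \cdot x)) = f(\gamma \cdot x) = \gamma \cdot f(x) = \gamma \cdot F(\pi(x))$, and surjectivity of $\pi$ gives equivariance of $F$. The one genuinely technical point is the Lipschitz estimate \eqref{eqn:f-lipschitz}, and within it the main care is needed in the subdivision argument near the zeros of $\phi$, where the perturbation of $\alpha$ together with continuity of $f$ is used to reduce to the nonsingular case.
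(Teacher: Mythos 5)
Your proof is correct, but it reaches the factorization $f = F\circ\pi$ by a different (and stronger) intermediate statement than the paper does. The paper's proof of well-definedness is essentially the $h=0$ case of the hypothesis: \eqref{eqn:distance-is-height} forces $f$ to be constant on nonsingular horizontal leaf segments, continuity extends this across the zeros to entire singular leaves, and the factorization then follows from the universal property of the quotient map $\pi$; straightness and equivariance are then checked exactly as you do. You instead prove the global estimate $d(f(x),f(y)) \leq d_{T_\phi}(\pi(x),\pi(y))$ by the chain/subdivision argument in natural-coordinate disks together with a perturbation off the discrete zero set. This costs more work (the Lebesgue-number subdivision and the $\epsilon$-perturbation near $\Tilde{Z}_\phi$), but it buys something: it shows $F$ is $1$-Lipschitz, hence continuous, and it yields $f(x)=f(y)$ whenever $d_{T_\phi}(\pi(x),\pi(y))=0$, so you never need to identify the fibers of $\pi$ with individual horizontal leaves --- a point the paper's shorter argument implicitly relies on via the construction of $T_\phi$ as a quotient. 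Both routes use the same two standard facts about the dual tree (that $d_{T_\phi}$ is the infimum of heights of connecting paths, and that a nonsingular geodesic parameterized by height projects isometrically), and your verifications of straightness and equivariance coincide with the paper's. The only minor point worth making explicit is that the chords $J_i$ you draw inside small natural-coordinate disks are genuinely nonsingular $\phi$-geodesic segments of $\Tilde{X}$ (local geodesics are global here since the lifted singular flat metric is $\CAT(0)$), so the hypothesis does apply to them.
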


\begin{proof}
Condition \eqref{eqn:distance-is-height} implies that $f$ it is
constant on all nonsingular horizontal leaf segments.  By continuity,
it is also constant on segments of horizontal leaf segments with
endpoints at zeros, and therefore on all horizontal leaves (including
those which pass through zeros of $\phi$).  By construction of $\pi :
\Tilde{X} \to T_\phi$ as a quotient map, this is equivalent to having
a unique factorization $f = F \circ \pi$ where $F : T_\phi \to T$ is
continuous.

The parameterization of a $\phi$-geodesic in $\Tilde{X}$ by height
maps by $\pi$ to a geodesic segment in $T_\phi$ parameterized by arc
length.  Thus \eqref{eqn:distance-is-height} shows that $F$ is an
isometric embedding when restricted to a nonsingular segment in
$T_\phi$, i.e.~the map $F$ is straight.

Equivariance of $F$ follows from that of $f$ by uniqueness of the
factorization.
\end{proof}

\subsection{Proof of Theorem \ref{thm:main-folding}}

We have a divergent sequence $\phi_n$ with projective limit $\phi$ and
an accumulation point $[\ell]$ of $\hol(\phi_n)$ in the Morgan-Shalen
boundary of $\X(\Pi)$.  Pass to a subsequence (still called $\phi_n$)
so that $\hol(\phi_n)$ converges to $[\ell]$.  Theorem
\ref{thm:limit-of-epstein-maps} gives an $\R$-tree representing
$[\ell]$, which we denote by $T_0$, and an equivariant map $\ep_\infty
: \Tilde{X} \to T_0$.  Let $T = \ep_\infty(\Tilde{X})$ denote the
image of this map, which by equivariance is also an $\R$-tree carrying
an isometric action of $\Pi$.  Passing to an invariant subtree does
not change the translation length function of a group action
(\cite[II.2.2 and II.2.12]{morgan-shalen:valuations-trees}), so $T$
also represents $[\ell]$.  Part (ii) of Theorem
\ref{thm:limit-of-epstein-maps} shows that the surjective map
$\ep_\infty : \Tilde{X} \to T$ satisfies the hypotheses of Lemma
\ref{lem:straightness} and hence gives a surjective, equivariant
straight map $T_\phi \to T$. \hfill \qedsymbol

\subsection{Simple zeros} For dual trees of quadratic differentials
with only simple zeros, straight maps are isometric:

\begin{lem}
\label{lem:simple}
If $\phi \in Q(X)$ has only simple zeros, then any straight map $F :
T_\phi \to T$ is an isometric embedding.  In particular, if $\Pi$ acts
minimally on $T$ and $F$ is equivariant, then $T$ is equivariantly
isometric to $T_\phi$.
\end{lem}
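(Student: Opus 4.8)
The plan is to reduce the statement to the claim that a straight map cannot \emph{fold} at any point of $T_\phi$, and then to rule out folding using the local structure of $T_\phi$ at a simple zero. Recall that a straight map is in particular a morphism of $\R$-trees (Section~\ref{sec:straight}, \cite{skora}), so it carries each geodesic to a unit-speed piecewise-geodesic path; in an $\R$-tree such a path is itself a geodesic exactly when it never backtracks. Hence $F$ is an isometric embedding if and only if it identifies no two distinct directions at a single point, i.e.~$F$ never folds. Away from branch points, every point of $T_\phi$ is the $\pi$-image of a regular leaf, and a short transverse arc crossing that leaf is a nonsingular $\phi$-geodesic whose image is a nonsingular segment through the point; straightness makes $F$ isometric there, so no folding can occur at a non-branch point. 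It therefore remains to analyze the branch points.

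The branch points of $T_\phi$ are the $\pi$-images of the singular horizontal leaves, one per zero of $\phi$. First I would record the local picture at a zero of order $k$: the horizontal foliation has $k+2$ prongs and the cone angle is $(k+2)\pi$ (both recalled in Section~\ref{sec:background}), so in the natural coordinate where $\phi = z^k\,dz^2$ the $k+2$ separatrices cut a punctured neighborhood of the zero into $k+2$ sectors, each isometric to a Euclidean half-plane. In the leaf space each sector contributes one direction at the corresponding branch point $v$, so $v$ has valence $k+2$. The crucial local observation is that a nonsingular $\phi$-geodesic whose tree-image passes through $v$ must meet the singular leaf, and since it avoids the zero it crosses exactly one prong transversally, thereby joining the two sectors adjacent to that prong. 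By straightness $F$ sends these two adjacent directions to distinct (indeed opposite) directions at $F(v)$, so $F$ does not fold any cyclically adjacent pair of directions at $v$.

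For a simple zero $k=1$ the branch point $v$ is a tripod with exactly three directions, and its three prongs border the three pairs of sectors, so every pair of the three directions is connected by a prong-crossing nonsingular geodesic. By the preceding step none of these pairs is folded, hence $F$ is injective on the directions at $v$ and folds nowhere on $T_\phi$; it is therefore an isometric embedding. This is precisely where simplicity is used: for $k\geq 2$ the valence is $k+2\geq 4$ and the non-adjacent pairs of directions are realized by \emph{no} nonsingular geodesic, leaving room for folding, which is exactly why Theorem~\ref{thm:main-folding} only produces a straight map in general.

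Finally, for the ``in particular'' statement I would observe that the image $F(T_\phi)$ is a subtree of $T$; if $F$ is $\Pi$-equivariant then $F(T_\phi)$ is a nonempty $\Pi$-invariant subtree, so minimality of the $\Pi$-action on $T$ forces $F(T_\phi)=T$, making $F$ a surjective isometric embedding, i.e.~an equivariant isometry $T_\phi \to T$. The main obstacle is the middle step: pinning down the local model of $T_\phi$ at a branch point and verifying that nonsingular geodesics realize exactly the cyclically adjacent pairs of directions, since this dichotomy between simple and higher-order zeros is the crux both of this lemma and of the folding phenomenon in Theorem~\ref{thm:main-folding}.
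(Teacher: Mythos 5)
Your overall strategy---reduce to the absence of folding and then check, direction by direction at each branch point, that every pair of directions is realized by a nonsingular segment---is genuinely different from the paper's proof, which never classifies the directions at a branch point. The paper instead takes arbitrary $x,y\in T_\phi$, joins chosen lifts by a flat geodesic $J$, notes that $F\circ\pi$ is isometric on each nonsingular piece of $J$, and obtains local injectivity at each zero by pushing $J$ off the zero to the side where the angle is less than $2\pi$ (such a side exists precisely because the cone angle at a simple zero is $3\pi$); the pushed-off arc is a nonsingular segment with the same image in $T_\phi$, so straightness forbids a fold there. Your reduction to non-folding is sound, the analysis at non-branch points is fine, and your observation that a non-horizontal nonsingular geodesic crosses a given singular leaf at most once (hence joins only the two sectors adjacent across a single prong) is correct and is really the same local geometry that the push-off exploits.

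The gap is exactly where you flagged it, and it is more than a bookkeeping issue. The local model you assert for $T_\phi$ at a branch point---one branch point per zero, of valence $k+2$, with directions in bijection with the sectors cut out by the prongs---is not proved, and it is false whenever the horizontal foliation $\F(\phi)$ has a saddle connection. A single singular leaf may then contain several simple zeros; its image is one branch point $v$ of valence at least four, and by your own criterion the pairs of directions realized by nonsingular segments are only those whose sectors share a single prong of that leaf in their common boundary. For two simple zeros joined by a horizontal saddle connection there are four sectors, and the two ``outer'' sectors (those opposite the connecting segment at each zero) share no prong, so no nonsingular segment joins the corresponding pair of directions at $v$; your argument therefore cannot exclude a straight map that folds those two directions together. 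As written the proof covers only those $\phi$ with no horizontal saddle connections---a dense, co-null, but not open set---whereas the lemma and Theorem \ref{thm:main-simple} need all $\phi$ with simple zeros. Closing this requires an idea beyond the adjacency analysis (and it is worth noting that the paper's push-off is likewise performed one zero at a time, so a geodesic $J$ that runs along an entire horizontal saddle connection, where the ``part of $J$ near the singular leaf'' is no longer a single corner, is a configuration that needs explicit care in either approach). The final ``in particular'' step via minimality is fine.
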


\begin{figure}
\begin{center}
\includegraphics[height=4cm]{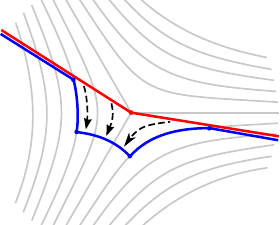}
\caption{A geodesic passing through a simple zero can be pushed to a
  nonsingular segment by an isotopy that moves along leaves of the
  horizontal foliation.\label{fig:pushoff}}
\end{center}
\end{figure}

The proof rests on a well-known technique of deforming a
$\phi$-geodesic so that it avoids a neighborhood of the zeros (compare
e.g.~\cite[Lem.~4.6]{wolf:thesis}), which for simple zeros can be
accomplished without changing the image in the dual tree.  The
specific construction we use here closely parallels that of Farb-Wolf
in \cite[Sec.~5.2]{Farb-Wolf}.

\begin{proof}
A local homeomorphism from an interval in $\R$ to an $\R$-tree is in
fact a homeomorphism and its image is a geodesic.  Consider a pair of
points $x,y \in T_\phi$ and lifts $\Tilde{x},\Tilde{y} \in \Tilde{X}$
through the projection $\pi : \Tilde{X} \to T_\phi$.  Let $J$ be the
$\phi$-geodesic joining $\Tilde{x}$ and $\Tilde{y}$, which
consists of a sequence of nonsingular segments that meet at zeros of
$\phi$.

Since $F$ is straight, its restriction to $\pi(J)$ maps each
nonsingular segment onto a geodesic in $T$, and the sum of the lengths
of these geodesics is $d(x,y)$.  If we show that $\left . F \right
|_{\pi(J)}$ is also locally injective near the image of a zero of
$\phi$, then $f(J)$ is the geodesic from $f(x)$ to $f(y)$ and we
conclude that $d(x,y) = d(f(x),f(y))$ for all $x,y \in T$.

If a $\phi$-geodesic $J \subset \Tilde{X}$ passes through a zero $z$
of $\phi$, then sum of the angles on either side of $J$ at $z$ is $(k
+ 2) \pi$, where $k$ is the order of the zero.  Thus at a
\emph{simple} zero, there is a side on which the angle is less than $2
\pi$.  On this side, we can push the part of $J$ near $z$ to a
nonsingular segment of a vertical leaf by an isotopy that moves along
horizontal leaves of $\phi$ (see Figure \ref{fig:pushoff}).  In
particular the segment of $\pi(J)$ near $\pi(z)$ is also the image of
a nonsingular segment in $\Tilde{X}$.  Since a straight map is
injective on such segments, we conclude that $\left . F \right
|_{\pi(J)}$ is locally injective, as desired.
\end{proof}

\subsection{Proof of Theorem \ref{thm:main-simple}}
Let $[\ell]$ be an accumulation point of $\hol(\phi_n)$.  Theorem
\ref{thm:main-folding} gives a tree $T$ representing $[\ell]$ and a
straight map $T_\phi \to T$.  By Lemma \ref{lem:simple} the straight
map is an isometric embedding and hence $[\ell]$ is the length
function of the action of $\Pi$ on $T_\phi$.  In particular there is
only one accumulation point of this sequence in the Morgan-Shalen
compactification.  Furthermore, by \cite[Thm.~3.7]{culler-morgan}, any
$\R$-tree on which $\Pi$ acts isometrically with this length function
has a unique minimal invariant subtree equivariantly isometric to
$T_\phi$.

The set of quadratic differentials that have a zero of multiplicity at
least $2$ is a closed algebraic subvariety of $Q(X) \simeq \C^{3g-3}$,
so this set is nowhere dense and null for the Lebesgue measure class.
This gives the required properties for the set of differentials with
only simple zeros.  \hfill \qedsymbol

\subsection{Abelian actions and straight maps}

An \emph{abelian} action of $\Pi$ on an $\R$-tree is one which has
nonzero translation length function $\ell : \Pi \to \R$ of the form
$\ell(g) = |\chi(g)|$ where $\chi : \Pi \to \R$ is a homomorphism.
(See \cite{alperin-bass} for detailed discussion of such actions.)  The
homomorphism $\chi$ can be recovered, up to sign, from the length
function $\ell$.  The action of $\Pi$ on $\R$ by translations given by
$g \cdot x = x + \chi(g)$, is an example of an abelian action, which
we call the \emph{shift} induced by $\chi$.

An abelian action on an $\R$-tree fixes an end of the tree, and the
Busemann function of this end gives an equivariant map $b: T \to
\R$ that intertwines the action of $\Pi$ on $T$ with the shift induced
by $\chi$.  Thus the shift is ``final'' among actions with a given
abelian length function.

Straightness is also preserved by composition with the Busemann
function of an abelian action:
\begin{lem}
\label{lem:straight-to-R}
  Let $T$ be an $\R$-tree equipped with an abelian action of
  $\Pi$ by isometries, and let $b : T \to \R$ denote the
  Busemann function of a fixed end.  If $F : T_\phi \to T$ is an
  equivariant straight map, then $b \circ F$ is also straight.
\end{lem}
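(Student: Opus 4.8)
The plan is to reduce the claim to a monotonicity property of the Busemann function $b$ along the images of nonsingular segments, to verify that property on the axes of periodic trajectories, and then to propagate it to all nonsingular segments by density and continuity. For the reduction, let $S$ be a nonsingular segment in $T_\phi$ with endpoints $x,y$ and length (equal to its height) $h$. By definition of a straight map, $F$ restricts to an isometric embedding on $S$, so $F(S)$ is a genuine geodesic segment of length $h$ in $T$, and $b\circ F|_S$ is the composite of the isometry $F|_S$ with $b|_{F(S)}$. As $b$ is $1$-Lipschitz, $b\circ F$ is $1$-Lipschitz on $S$ and is an isometric embedding there exactly when $|b(F(x))-b(F(y))|=h$, i.e.\ when $b$ is monotone along the geodesic $F(S)$. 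In an $\R$-tree the restriction of $b$ to a geodesic segment fails to be monotone precisely when the fixed end $\omega$ branches off the \emph{interior} of that segment. So it suffices to rule this out, i.e.\ to show $|b(F(x))-b(F(y))|=h$ for every nonsingular $S$; since this is a closed condition on the continuous function $b\circ F$, it is enough to establish it on a dense family of nonsingular segments.

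The crucial case is a periodic, nonsingular, non-horizontal $\phi$-geodesic representing some $\gamma\in\Pi$. Its lift $\tilde A\subset\tilde X$ is a single straight line avoiding $Z_\phi$ along which the height is strictly monotone, so $A_\gamma=\pi(\tilde A)$ is an embedded geodesic line in $T_\phi$ on which $\gamma$ translates by $h_\gamma$, the height of the periodic geodesic. Every finite sub-segment of $A_\gamma$ is nonsingular, so straightness makes $F|_{A_\gamma}$ not merely a local isometry but a \emph{global} isometric embedding onto a geodesic line $\ell_\gamma=F(A_\gamma)$. Equivariance then gives $d_T(F(p),\gamma\cdot F(p))=h_\gamma$ for every $p\in A_\gamma$, a constant positive displacement along the whole line $\ell_\gamma$. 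A standard displacement computation in $\R$-trees forces $\gamma$ to be hyperbolic on $T$ with axis exactly $\ell_\gamma$. Since the action on $T$ is abelian it fixes the end $\omega$, and a hyperbolic isometry fixes only the two endpoints of its axis; hence $\omega$ is one endpoint of $\ell_\gamma$, so $b$ is strictly monotone along $\ell_\gamma$ and $b\circ F$ is straight on every sub-segment of $A_\gamma$. (Notably this needs no separate assumption $\chi(\gamma)\neq0$: the isometric embedding of $A_\gamma$ already forces $\gamma$ to be hyperbolic with translation length $h_\gamma>0$, so $\chi(\gamma)\neq0$ automatically.)

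To finish by density, recall that by Theorem \ref{thm:dense} tangent vectors to periodic $\phi$-geodesics are dense in the unit tangent bundle. Thus an arbitrary nonsingular segment $S$ with endpoints $x,y$ and height $h$ can be approximated in $\tilde X$ by sub-segments of such axes: choosing periodic nonsingular non-horizontal geodesics whose direction and basepoint approach those of $S$ produces nonsingular segments $[x_k,y_k]\subset A_{\gamma_k}$ with $x_k\to x$, $y_k\to y$ in $T_\phi$ and heights tending to $h$. For each $k$ the previous paragraph gives $|b(F(x_k))-b(F(y_k))|=d(x_k,y_k)$, and passing to the limit using continuity of $b\circ F$ yields $|b(F(x))-b(F(y))|=h$. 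Hence $b\circ F$ is isometric on every nonsingular segment, i.e.\ it is straight.

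The main obstacle, and the step needing the most care, is the rigidity in the second paragraph: extracting from the \emph{definition} of straightness (isometry on each nonsingular segment) that $F$ is globally isometric along the entire line $A_\gamma$, which is what lets the constant-displacement computation pin $\ell_\gamma$ down as the axis and locate $\omega$ at one of its ends. The remaining technical point is the approximation in the third paragraph, where one must check that nearby periodic trajectories really furnish nonsingular segments whose endpoints and heights converge to those of $S$ in $T_\phi$.
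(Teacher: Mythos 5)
Your proposal is correct and follows essentially the same route as the paper: reduce to the identity $|b(F(x))-b(F(y))|=h$, establish it first on (projections of) periodic nonsingular non-horizontal axes by showing $F$ carries such an axis isometrically onto the axis of $\gamma$ in $T$ with the fixed end as one of its ends, and then extend to arbitrary nonsingular segments using density of periodic trajectories (Theorem \ref{thm:dense}). The one point where you diverge is the final approximation step, which the paper handles without any limit: it chooses the nearby periodic axis so that the endpoints of $J$ are joined to points of the axis by nonsingular \emph{horizontal} segments, whence the endpoints have \emph{identical} images in $T_\phi$ (and hence under $F$), so the equality transfers exactly rather than via continuity of $b\circ F$ and convergence of endpoints and heights.
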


\begin{proof}
  Let $\gamma \in \Pi$ be an element represented by a periodic
  $\phi$-geodesic.  This periodic geodesic lifts to a complete
  geodesic axis $\Tilde{L} \subset \Tilde{X}$ on which $\gamma$ acts as a
  translation, and $L \define \pi(L)  \subset T_\phi$ is the axis of the action
  of $\gamma$ on $T_\phi$.

  Because $F$ is $\phi$-straight, it maps $L$ homeomorphically to
  the geodesic axis of $\gamma$ in $T$.  Since $F(L)$ is
  $\gamma$-invariant, in one direction it is asymptotic to the fixed
  end of $\Pi$ on $T$, and the restriction of $b$ to $F(L)$ is an
  isometry.  Thus $b \circ f$ maps any segment along $L$ of height $h$
  to an interval in $\R$ of length $h$.

  Now consider an arbitrary nonsingular $\phi$-geodesic segment $J
  \subset \Tilde{X}$ with endpoints $\Tilde{x},\Tilde{y}$ and height
  $h$.  By Theorem \ref{thm:dense}, periodic $\phi$-geodesics are
  dense in the unit tangent bundle of $X$, so we can approximate $J$
  by a segment on an axis of some element $\gamma \in \Pi$ in
  $\Tilde{X}$.  More precisely, we can find such $\Tilde{L}$ and a
  pair of points $\Tilde{x}',\Tilde{y}' \in \Tilde{L}$ such that the
  pairs $(\Tilde{x},\Tilde{x}')$ and $(\Tilde{y},\Tilde{y}')$
  determine nonsingular horizontal $\phi$-geodesic segments.  Let $x =
  \pi(\Tilde{x})$ and similarly for $y$, $x'$, and $y'$.  Then $F(x) =
  F(x')$, $F(y) = F(y')$, and by the previous argument we have $|
  b(F(x)) - b(F(y))| = h$.  Thus $\pi(J)$ maps by $b \circ F$ to a
  segment of length $h$, and $b \circ F$ is straight.
\end{proof}

\begin{lem}
\label{lem:harmonic-representative}
  Let $T$ be an $\R$-tree equipped with an abelian action of
  $\Pi$ by isometries with length function $\ell = |\chi|$.  If
  there exists a $\phi$-straight map $f : \Tilde{X} \to T$, then $\phi
  = \omega^2$ where $\omega$ is the holomorphic $1$-form on $X$ whose
  imaginary part is the harmonic representative the cohomology class
  of $\chi : \Pi \to \R$.
\end{lem}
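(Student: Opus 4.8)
The plan is to exploit the abelian structure to collapse the target tree $T$ onto the real line by means of a Busemann function, turning the straight map into a single real-valued ``height'' function on $\Tilde{X}$, and then to recognize that function as the imaginary part of a square root of $\phi$. As in the application, take $f$ to be $\Pi$-equivariant (this is what connects $\phi$ to $\chi$). Since $f$ is $\phi$-straight it factors as $f = F \circ \pi$ with $F : T_\phi \to T$ equivariant and straight (Lemma \ref{lem:straightness}). Let $b : T \to \R$ be the Busemann function of the end fixed by the abelian action, normalized so that it intertwines the $\Pi$-action with the shift by $\chi$, i.e.\ $b(\gamma \cdot p) = b(p) + \chi(\gamma)$. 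By Lemma \ref{lem:straight-to-R} the composite $b \circ F$ is straight, so $u \define b \circ f = (b\circ F)\circ \pi : \Tilde{X} \to \R$ is a straight map into $\R$ (viewed as an $\R$-tree) satisfying $u(\gamma \cdot x) = u(x) + \chi(\gamma)$. Concretely, straightness says that for every nonsingular $\phi$-geodesic segment with endpoints $x,y$ and height $h$ we have $|u(x) - u(y)| = h$ (the case $h = 0$ giving constancy along horizontal leaves).

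Next I would analyze $u$ in a natural coordinate $z = x + iy$ on a disk $D$ disjoint from $Z_\phi$. Any two points $p,q \in D$ are joined by a straight segment, which is a nonsingular $\phi$-geodesic of height $|\Im z(p) - \Im z(q)|$, so the straightness identity yields $|u(p) - u(q)| = |\Im z(p) - \Im z(q)|$ for \emph{all} $p,q \in D$. Hence $u$ depends only on $\Im z$ and restricts to an isometry of intervals there, forcing $u = \epsilon\, \Im z + c$ for some sign $\epsilon \in \{+1,-1\}$ and constant $c$ on $D$. In particular $u$ is harmonic on $\Tilde{X}\setminus \Tilde{Z}_\phi$ with unit gradient in the $\phi$-metric.

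I would then set $\omega \define 2i\, \partial u$. Because $u$ is harmonic, $\omega$ is a holomorphic $1$-form, and it is manifestly single-valued on $\Tilde{X}\setminus \Tilde{Z}_\phi$ since $u$ is a genuine function there; the local expression $u = \epsilon\,\Im z + c$ gives $\omega = \epsilon\, dz$, so that $\omega^2 = \Tilde{\phi}$ away from the zeros. Near a zero of order $k$ the relation $\omega^2 = \Tilde{\phi}$ forces $\omega \sim z^{k/2}\, dz$, so single-valuedness of $\omega$ around the puncture forces $k$ to be even; then $\omega$ is bounded and extends holomorphically across $\Tilde{Z}_\phi$ by removable singularities, with $\omega^2 = \Tilde{\phi}$ on all of $\Tilde{X}$. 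Equivariance of $u$ gives $\gamma^*\omega = 2i\,\partial(u + \chi(\gamma)) = \omega$, so $\omega$ descends to a holomorphic $1$-form $\omega \in \Omega(X)$ with $\phi = \omega^2$.

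Finally, the $1$-form $\eta \define \Im \omega = du$ is $\Pi$-invariant and descends to a harmonic $1$-form on $X$ (the imaginary part of a holomorphic form), whose periods are $\int_\gamma \eta = u(\gamma \cdot x) - u(x) = \chi(\gamma)$; thus $[\eta] = [\chi] \in H^1(X,\R)$, and by uniqueness of harmonic representatives $\eta$ is exactly the harmonic representative of $[\chi]$. This identifies $\omega$ as the asserted $1$-form and completes the argument. I expect the main obstacle to be the local rigidity step together with its consequence at the zeros: one must combine the height identity with constancy along horizontal leaves to pin $u$ down to $\epsilon\,\Im z + c$, and then observe that the global single-valuedness of $\omega = 2i\,\partial u$ is incompatible with a zero of odd order, which is precisely what makes the conclusion ``$\phi$ is a square'' hold exactly rather than merely locally.
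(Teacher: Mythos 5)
Your proposal is correct and follows essentially the same route as the paper: reduce to the shift on $\R$ via the Busemann function and Lemma \ref{lem:straight-to-R}, pin down $u$ locally as $\pm\Im z + c$ in a natural coordinate, and identify $\omega$ via its periods. Your parity argument at the zeros is just a more explicit version of the paper's removable-singularity step (the paper extends $f$ harmonically across the isolated zeros by boundedness and then extends the identity $\phi=\omega^2$ by continuity), so no substantive difference.
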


Note that this lemma is an analog for straight maps of the properties of
harmonic maps established in \cite[Thm.~3.7]{ddw:morgan-shalen}, and our
technique is a straightforward adaptation of their argument.

\begin{proof}
  By the previous lemma, we can assume that $T = \R$ with the shift
  action induced by $\chi$.  In this case it suffices to show that $f
  : \Tilde{X} \to \R$ is a harmonic function with $\Tilde{\phi} = 4
  (\partial f)^2$, for then $\Tilde{\omega} = 2 \partial f$ is
  $\Pi$-invariant and descends to a $1$-form on $X$ which, by
  construction, has periods (and thus cohomology class) given by the
  translation action of $\chi$.

  Away from the zeros of $\Tilde{\phi}$, we have a local conformal
  coordinate $z$ for $\Tilde{X}$ in which $\Tilde{\phi} = dz^2$.
  Restricting $f$ to such a coordinate neighborhood and considering it
  as a function of $z$, the $\phi$-straightness condition implies that
  $f$ if constant on horizontal lines and on a vertical line it has
  the form $\pm \Im(z) + C$ for some constant $c$.  In particular $f$
  is a real linear function and $\partial f = \pm 2 dz$.  Thus in a
  neighborhood of any point in $\Tilde{X}$ that is not a zero of
  $\Tilde{\phi}$, we can express $f$ as the composition of the
  conformal coordinate map $z$ and a real linear function, which is
  harmonic.  Since the zeros of $\Tilde{\phi}$ are isolated and $f$ is
  continuous (thus bounded in a neighborhood of each zero), the
  function $f : \Tilde{X} \to \R$ is harmonic.  The equation
  $\Tilde{\phi} = 4 (\partial f)^2$, which we have verified away from
  the zeros, also extends by boundedness of $f$.
\end{proof}

\subsection{Proof of Theorem \ref{thm:main-abelian}}
We have a divergent sequence $\phi_n$ such that $\hol(\phi_n)$
converges in the Morgan-Shalen sense to an abelian length function
$|\chi|$.  Consider any subsequence $\phi_{n_k}$ that converges
projectively, to $\phi \in Q(X)$.  As in the proof of Theorem
\ref{thm:main-folding}, there is a subsequence of $\hol(\phi_{n_k})$
giving a limit action on an $\R$-tree $T_\infty$ representing
$|\chi|$ and an equivariant straight map $F : T_\phi \to
T_\infty$.  By Lemma \ref{lem:harmonic-representative} we have $\phi =
\omega^2$ where $\omega$ is the harmonic representative of $[\chi]$.

Since we have shown that this is the unique projective accumulation
point of the original sequence, we conclude that $\phi_n$ converges
projectively to $\omega^2$.

\nocite{} 
\newcommand{\removethis}[1]{}

\end{document}